\newtheorem{theorem}{Theorem}[section]
\newtheorem{lemma}[theorem]{Lemma}
\newtheorem{proposition}[theorem]{Proposition}
\newtheorem{corollary}[theorem]{Corollary}
\newtheorem{Counter-example}[theorem]{Counter-example}
\newtheorem{remark}[theorem]{Remark}
\newtheorem{example}[theorem]{Example}
\newenvironment{proof}{\noindent{\it Proof.}}{\hfill$\blacksquare$}
\newtheorem{definition}[theorem]{Definition}
\begin{document}

\def\Q{\mathbb Q}
\def\R{\mathbb R}
\def\N{\mathbb N}
\def\Z{\mathbb Z}
\def\C{\mathbb C}
\def\S{\mathbb S}
\def\L{\mathbb L}
\def\H{\mathbb H}
\def\K{\mathbb K}
\def\X{\mathbb X}
\def\Y{\mathbb Y}
\def\Z{\mathbb Z}
\def\E{\mathbb E}
\def\J{\mathbb J}
\def\I{\mathbb I}
\def\T{\mathbb T}
\def\H{\mathbb H}

\title{Spacelike immersions in certain Lorentzian manifolds  \\ with  lightlike foliations}

\author{
Rodrigo Mor\'on\footnote{Corresponding author. \\
Both authors are  partially supported  by Spanish MICINN project PID2020-118452GB-100.\\
2020 {\it Mathematics Subject Classification}.\, Primary 53B25, 53C40, 53C42. Secondary 53B30, 53C50. \\
{\it Key words and phrases}\, Lorentzian geometry, Spacelike submanifolds, Lightlike hypersurfaces, Generalized Schwarzschild spacetimes.},\,\, Francisco J. Palomo}

\date{}

\maketitle

\begin{abstract}
\noindent The generalized Schwarzschild spacetimes are introduced as warped manifolds where the base is an open subset of $\R^2$ equipped with a Lorentzian metric and the fiber is a Riemannian manifold. This family includes physically relevant spacetimes closely related to models of black holes. The generalized Schwarzschild spacetimes are endowed with involutive distributions which provide foliations by lightlike hypersurfaces. In this paper, we study spacelike submanifolds immersed in the generalized Schwarzschild spacetimes, mainly, under the assumption that such submanifolds lie in a leaf of the above foliations. In this scenario, we provide an explicit formula for the mean curvature vector field and establish relationships between the extrinsic and intrinsic geometry of the submanifolds. We have derived several characterizations of the slices, and we delve into the specific case where the warping function is the radial coordinate in detail. This subfamily includes the Schwarzschild and Reissner-Nordstr\"om spacetimes.

\end{abstract}

\hyphenation{Lo-rent-zi-an}

\section{Introduction}

The $(m+2)$-dimensional exterior Schwarzschild spacetime with mass $\mathbf{M}\geq 0$ is equipped with the Lorentzian metric
$$
\widetilde{g}=-\Big(1- \frac{2\mathbf{M}}{r^{m-1}} \Big)dt^{2}+ \frac{1}{\Big(1- \frac{2\mathbf{M}}{r^{m-1}} \Big)}dr^{2}+ r^{2}g_{\S^{m}},
$$
where $(t,r)\in \R\times \R_{+}$  with $
r^{m-1}> 2\mathbf{M}
$
and $g_{\S^{m}}$ denotes the usual round metric of constant sectional curvature $1$ on the $m$-dimensional sphere $\S^{m}.$
This spacetime satisfies the vacuum Einstein equation and is spherically symmetric. Even more, these properties characterizes the Schwarzschild  spacetime. For  $\mathbf{M}= 0$, the Schwarzschild   metric reduces to the Minkowski metric in spherical terms, Example \ref{280723A}.  
The exterior Schwarzschild  spacetime  has several remarkable properties which we are interested in. 
\begin{enumerate}

\item For every lightlike vector field $\xi$ in the $tr$-half plane, the Levi-Civita connection $\widetilde{\nabla}$ of  the Schwarzschild  metric satisfies
$\widetilde{\nabla}\xi= \alpha \otimes \xi$ for some $1$-form $\alpha.$

\item The $(m+2)$-dimensional exterior Schwarzschild  spacetime admits two foliations by lightlike hypersurfaces. In fact, for every lightlike vector field $\xi$ in the $tr$-half plane, the distribution given by the vector fields $\widetilde{g}$-orthogonal to $\xi$ is integrable and every leaf inherits a degenerate metric from $\widetilde{g},$ Lemma \ref{190723A}.

\item The vector field $\partial t$ is a timelike Killing vector field. That is,  the exterior Schwarzschild  spacetime is static.
\end{enumerate}

\noindent These properties rely on the following facts.  The metric $\widetilde{g}$ is a warped product metric given by a Lorentzian metric on an open subset of the  $tr$-plane and a Riemannian metric,  \cite[Chap. 7]{One83}. Moreover, the metric on the $tr$-plane admits a globally defined lightlike vector field  and the function $
f^2(r):=\Big(1- \frac{2\mathbf{M}}{r^{m-1}} \Big)
$
does  depend only on $r$.

These facts lead us to consider the  following class of Lorentzian warped product manifolds.   

\begin{definition}\label{190723B}

{\rm
 
A Lorentzian warped product manifold $(\widetilde{M}, \widetilde{g})=B\times_{\lambda} F$   is said to be an $(m+2)$-dimensional generalized (exterior) Schwarzschild spacetime when $B$ is an open subset of $\R^{2}$ with canonical coordinates $(t,r)$ and metric
\begin{equation}\label{17112022A}
g_B=-f^2(r)dt^2+\dfrac{1}{f^2(r)}dr^2,
\end{equation}
where $f(r)> 0$,  $(F,g_F)$ is a $m$-dimensional connected Riemannian manifold and $\lambda\in C^{\infty}(B)$ with $\lambda>0$ is the warping function. That is, $\widetilde{M}= B \times F$ and 
$$\widetilde{g}=\pi_B^*(g_B)+(\lambda\circ\pi_B)^2\pi_F^*(g_F),$$ where $\pi_B$ and $\pi_F$ are the natural projections on $B$ and $F$, respectively \cite[Chap. 7]{One83}.}
\end{definition}
Note that the Minkowski spacetime can be described in two ways from this setting. Namely, as was mentioned above  and  for $f(r)^{2}=1$, $\lambda=1$ and $F=\E^{m}$. 

For $\lambda(t,r)=r$ and $F=\S^{m}$, this class  includes  relevant spacetimes with spherical symmetry.  
Namely,  we set 
\begin{equation}\label{app}
f^{2}(r)=1-\frac{2 \mathbf{M}}{r^{m-1}}+ \frac{q^2}{r^{2m-2}}- \frac{2 \Lambda r^2}{m(m+1)},
\end{equation}
where  $\mathbf{M}$ is called the mass parameter, $q$ is the charge and $\frac{2 \Lambda r^2}{m(m+1)}$
is the cosmological constant when positive and $\frac{2 \Lambda r^2}{m(m+1)}=-1/R^2$ when negative, where $R$ is the anti-de Sitter radius. For $q= \Lambda=0$, we get the Schwarzschild metric and
for $q \neq 0$, $\Lambda= 0$,  the Reissner-Nordstr\"om metric with total charge $q$.
The  de-Sitter and anti-de-Sitter versions correspond to $\Lambda >0$ and  $\Lambda <0 $, respectively.

Karl Schwarzschild discovered  in 1916 the point-mass solution to Einstein  equations that bears his name.  Historically,   this solution was the first and more important nontrivial solution of the vacuum Einstein equations.  This is the reason why we have named this class of  spacetimes as  generalized Schwarzschild spacetimes.

The  generalized Schwarzschild spacetimes  admit two lightlike vector fields $\xi, \eta \in \mathfrak{X}(B)\subset \mathfrak{X}(\widetilde{M})$,  (\ref{11102022B}).  The distributions $D_{\xi}$ and $D_{\eta}$ defined by the $\widetilde{g}$-orthogonal vector fields to $\xi$ and $\eta$, respectively, are involutive,  Lemma \ref{190723A}.
Therefore, we have two transverse foliations by lightlike hypersurfaces of $\widetilde{M}$.  It is worth pointing out that   the vector field $\partial t\in \mathfrak{X}(\widetilde{M})$ is Killing if and only if the warping function $\lambda$ depends only on the (radial) coordinate $r$.  
The integral curves of $\xi$ and $\eta$ are called lightlike geodesic generators of the corresponding lightlike hypersurfaces.  
Recall that we can scale $\xi$ and $\eta$ so that they are geodesic vector fields, \cite{Gall}.
As was mentioned, the Minkowski spacetime $\L^{m+2}$ admits  two descriptions as generalized Schwarzschild spacetime.  Every description provides different foliations by lightlike hypersurfaces,  see details in Example \ref{280723A}.

\smallskip

The main aim of this paper is the study of $n$-dimensional spacelike submanifolds $M$ immersed in  generalized Schwarzschild spacetimes.
The research on spacelike submanifolds has been developed both from  physical and geometric interest.  For instance, the Cauchy problem for the Einstein equations
is formulated as an initial data problem on a Riemannian manifold which becomes a Cauchy hypersurface in the solution spacetime, see \cite[Chap. 7]{Lee}.
Recall also  the  Penrose incompleteness theorem which relates the existence of a trapped codimension two spacelike submanifold with the singularities of certain spacetimes \cite[Chap. 7]{Lee}. The notion of trapped submanifold is usually given in terms of the mean curvature vector field of the submanifold,  Definition \ref{110823C}.

Most of our results are focused on the particular situation in which the spacelike submanifold $M$ is contained in a leaf of the above mentioned foliations by lightlike hypersurfaces.
As it is well-known, lightlike hypersurfaces inherits a degenerate metric from the Lorentzian ambient metric and play an important role in General Relativity  as event horizons of black holes \cite{Gourgoulhon}.  The classical theory of submanifolds fails for these hypersurfaces.   We think  that the study of  codimension two spacelike submanifolds which factor through  a lightlike hypersurface can provide a tool to understand the geometry of such hypersurfaces. 
A different approach to lightlike manifolds from the Cartan geometries  has been developed in \cite{Pal21}.
The study of  codimension two spacelike submanifolds in lightlike hypersurfaces has been previously developed in \cite{PaPaRo}, \cite{PaRoRo} and \cite{PaRo}  for the case of compact submanifolds into the lightlike cone in the Minkowski spacetime.   The non-compact case is considered in \cite{ACR2}  and the study of trapped submanifolds into lightlike hypersurfaces of the  Sitter spacetime appears in \cite{ACR1}.  This approach has been also applied to Brinkmann spacetimes. Recall that  Brinkmann spacetimes admit a parallel lightlike vector field and then, they have a foliation by lightlike hypersurfaces.  Spacelike submanifolds which lie in such hypersurfaces have been studied in \cite{CaPaRo} for the compact case and in \cite{PPR} for more general settings.

We would like to emphasize that  the study of spacelike submanifolds in generalized Friedmann-Lema\^itre-Robertson-Walker spacetimes goes back to the seminal work \cite{AlRoSan}.  Since then multiple researchers  have developed  this topic. These spacetimes are written as $I\times_{\lambda}F$ with metric $-dt^2 + \lambda^2(t) F$. 
From this point of view, the submanifolds in generalized Schwarzschild spacetimes can be seem  as the  next natural step to shed light in the theory of spacelike submanifolds. As far as we know, there is no many works devoted to this problem.  For instance, in the setting of stationary spacetimes,  the study of  prescribed mean curvature problem in Schwarzschild and  Reissner-Nordstr\"om spacetimes appears in \cite{DRT}. On the other hand, the results in \cite{WWZ} have been enriching and have given us a better approach  for the development of our paper.

\smallskip

The plan of the paper is as follows. Section 2 exhibits basic notions, notations and also includes several technical results to be used later.
Section 3 focuses on the class of  Lorentzian manifolds  we are interested in, Definition \ref{190723B}.  
Since this class of Lorentzian manifolds are warped product manifolds, we particularize the formulas for theirs Levi-Civita connections from \cite[Chap. 7]{One83}.
For a spacelike immersion in a generalized Schwarzschild spacetime  $\Psi:M\rightarrow B\times_{\lambda} F,$ we have written   $\Psi=(\Psi_B,\Psi_F)$,   $u:=t\circ \Psi_{B}$ and  $v:=r\circ \Psi_{B}.$  
 Lemma \ref{020323A} states  that a spacelike immersion factors through an integral hypersurface of $D_{\xi}$  if and only if
$
\nabla v=(f\circ \Psi_{B})^2 \nabla u ,
$
where $\nabla$ denotes the gradient operator corresponding to the induced metric $g$ on  $M$. In order to make the presentation of the results more fluid, in the Introduction we specialize  our results and discussions to the  distribution $D_{\xi}$ and its integral lightlike hypersurfaces.  Almost all the results admit a similar version for the other distribution $D_{\eta}$.

Section 4 exhibits  fundamental equations for spacelike immersions in generalized Schwarzschild  spacetimes.
As a consequence, we obtain an  integral characterization of compact spacelike immersions through leaves of $D_{\xi}$,
Theorem \ref{070323C}.
\begin{quote}
Assume $\Psi:M\rightarrow B\times_{\lambda}F$ is a compact spacelike immersion in a generalized Schwarzschild spacetime  with $f'>0$ (resp. $f'<0$). Then 
$$
\int_{M} \Big[ n\,\widetilde{g}(\mathbf{H},\xi^{\bot}) +\Big(\dfrac{\xi\lambda}{\lambda}\circ \Psi_{B}\Big)\left[n+2g(\xi^{\top}, \eta^{\top})\right]\Big]\, d\mu_{g} \geq 0.\quad (\textrm{resp.} \leq 0),
$$
where the superscripts $\top$ and $\bot$ denote the tangent  and normal parts of the indicated vector fields, respectively.
The equality holds if and only if $M$ factors through an integral hypersurface  of $D_{\xi}.$
\end{quote}

Our main results are in Sections 5, 6 and 7, where we will focus on the case of spacelike submanifolds factoring through a lightlike integral hypersurface of $D_{\xi}$ or $D_{\eta}$. First, in
Section 5 we deal with the case of arbitrary codimension.  Our maim aim here is to find  several conditions  which assure that the immersion  factors through   a slice of the generalized Schwarzschild  spacetime.  That is, the functions $u$ and $v$ are constants. Thus, the  setting  is a spacelike immersion $\Psi:M\rightarrow B\times_{\lambda} F$  through an integral hypersurface of $D_{\xi}$.  
\begin{quote}
\begin{itemize}

\item Assume $\eta \lambda$ signed and $M$ compact with $\mathbf{H}=0$. Then $M $ factors through a slice  and the immersion of $M$ in such slice is minimal, Corollary \ref{020623A}.

\item Assume $M$ compact with $\mathrm{Ric}^g(\nabla v, \nabla v)\leq 0$. The normal vector field $\eta^{\bot}$ is an umbilical direction if and only if $M $ factors through a slice,  Theorem \ref{20032023A}.

\item Assume $\xi \lambda \neq 0$ never vanishes. Then $M$ factors through a slice if and only if $\nabla^{\bot}\xi=0$, Theorem \ref{250523A}.

\end{itemize}

\end{quote}

\noindent The assumptions in Corollary \ref{020623A} and Theorem \ref{250523A} hold for a wide family of generalized Schwarzschild spacetime. For Theorem \ref{20032023A}, it is a key fact that if $\eta^{\bot}$ is an umbilical direction then $\nabla v$ is a conformal vector field (\ref{22032023A}).

 In our notion of generalized Schwarzschild spacetime, the geometry of the Riemannian part $F$ is arbitrary. Nevertheless, the case of  spherical symmetry
is the most relevant from the physical point of view.  If we assume $M$  compact and $\eta^{\perp}$ an umbilical direction, Theorem \ref{260723A} states a condition on the Ricci tensor which shows that, when $\nabla v$ is a nonzero vector field, $M$ is isometric to a sphere $\S^{n}(c)$ of constant sectional curvature $c$. This result is a consequence of  \cite[Theor. 1]{DAL}.
Therefore,  in case that the codimension of $M$ is two and $F$ is simply-connected, by means of Proposition \ref{020623B},  the manifold $F$ must be a topological sphere.

Section 6 is devoted to study codimension two ($n=m$) immersions through these lightlike integral hypersurfaces.  In the terminology of black holes,  a such immersion $M$ is called a cross-section when every lightlike geodesic generator  intersects $M$ at most one,   \cite{Gourgoulhon}.
At topological level, every codimension two immersion through a lightlike integral hypersurface is a covering space of the fiber $F$ but not necessarily a Riemannian covering,  Proposition \ref{020623B} and Remark \ref{140823A}.  The mean curvature vector field of these immersions is obtained  in Proposition \ref{010623B} and Corollary \ref{280723D} as follows
\begin{quote}
$$
\mathbf{H}=\left[\dfrac{\eta\lambda}{\lambda}\circ \Psi_{B}-\Big(\dfrac{\xi\lambda}{2\lambda}\circ \Psi_{B}\Big)\|\nabla v\|^2+\dfrac{1}{m}\Delta v\right]\xi+\Big(\dfrac{\xi\lambda}{\lambda}\circ \Psi_{B}\Big)\ell^{\xi},
$$
where $\ell^{\xi}$ is the normal lightlike vector field to $M$ with $\widetilde{g}(\xi, \ell^{\xi})=-1$.  Furthermore, we  compute that
$$
\|\mathbf{H}\|^{2}=\frac{1}{v^2}\left( (f\circ \Psi_{B})^2- \frac{S^{\Psi_{F}^{*}(g_{F})}- v^2  S^{g}}{m(m-1)}\right),
$$
where $S^{g}$ and $S^{\Psi_{F}^{*}(g_{F})}$ are the scalar curvatures of the induced metric $g$ and $\Psi_{F}^{*}(g_{F})$ on $M$, respectively.
\end{quote}
These results extend  previous ones in \cite{ACR1}, \cite{ACR2}, \cite{PaPaRo}  and \cite{PaRo}, see details in Remark \ref{140823B}.
The second formula shows a relation between the intrinsic and extrinsic geometry of the codimension two immersions through  lightlike integral hypersurfaces. A such kind of relation has been previously pointed out for the case of the lightlike cone in the Minkowski spacetime in \cite{PaPaRo}  and \cite{PaRo}. Section 6 also contains a characterization of marginally trapped immersions when the warping function $\lambda$ agrees with the radial coordinate,   Corollary \ref{140823C}.
This result extends \cite[Cor. 6.3]{ACR2} where the case of the lightlike cone in the Minkowski spacetime was studied.

In Section 7 we proceed with the study of  immersions with parallel mean curvature vector field. That is, we consider the condition $\nabla^{\perp}\mathbf{H}=0.$
Under a technical condition,  Theorem  \ref{130623B}  provides an intrinsic characterization of the slices as the unique codimension two immersions through  lightlike integral hypersurfaces with parallel mean curvature vector field.

\section{Preliminaries}\label{Prel}

\noindent Let $(\widetilde{M}, \widetilde{g})$ be an $(m+2)$-dimensional Lorentzian manifold. A smooth immersion  $\Psi:M\rightarrow (\widetilde{M},\widetilde{g})$ of a (connected) $n$-dimensional  manifold $M$  is said to be spacelike when the induced metric $g:= \Psi^{*}(\widetilde{g})$ is Riemannian.  We assume $n\geq 2$ along this paper.
For any point $x\in M$, we denote $\Psi(x)\in \widetilde{M}$ by the same letter $x$ if there is no danger of confusion. We also ignore the differential map of the immersion $\Psi$. Thus $T_{x}M$ is considered as subspace of $T_{x}\widetilde{M}$ and its orthogonal complement $T_{x}M^{\perp} \subset T_{x}\widetilde{M}$ is called the normal space of $M$ at $x$. 
We write $\overline{\mathfrak{X}}(M)$ for the $C^{\infty}(M)$-module of vector fields along the immersion $\Psi$. 
The set of vector fields $\mathfrak{X}(M)$ may be seen as a $C^{\infty}(M)$-submodule of $\overline{\mathfrak{X}}(M)$ in a natural way and for every $E\in \mathfrak{X}(\widetilde{M})$ we have  its restriction  $E\mid_{\Psi} \in \overline{\mathfrak{X}}(M)$. For $V\in \overline{\mathfrak{X}}(M)$, we have the orthogonal decomposition 
$$V_x= V_x^{\top}+V_x^{\bot},$$ where $V^{\top}_{x}\in T_{x}M$ and $V^{\bot}_{x}\in T_{x}M^{\bot}$ for all $x\in M$.
The vector fields  $V^{\top}$ and $V^{\bot}$ are called the tangent part of $V$ and  the normal part of $V$, respectively. 
The $C^{\infty}(M)$-submodule of $\overline{\mathfrak{X}}(M)$ of all normal vector fields along $\Psi$ is denoted by $\mathfrak{X}^{\perp}(M)$, that is, $$\mathfrak{X}^{\perp}(M)=\{V \in \overline{\mathfrak{X}}(M): V^{\top}=0\}.$$

Now, let us write $\nabla$ and $\widetilde{\nabla}$ for the Levi-Civita connections of $M$ and $\widetilde{M}$, respectively. 
As usual, we also denote by $\widetilde{\nabla}$ the induced connection and  by $\nabla^{\perp}$ the normal connection on $M$. 
The decomposition of the induced connection $\widetilde{\nabla}$, into tangent and normal parts, leads to the Gauss and Weingarten formulas of $\Psi$ as follows
\begin{equation}\label{shape}
\widetilde{\nabla}_VW= \nabla_VW + \mathrm{II}(V,W) \quad \quad \mathrm{and} \quad \quad \widetilde{\nabla}_V\zeta=-A_{\zeta}V+\nabla^{\perp}_V\,\zeta,
\end{equation}
for every tangent vector fields $V,W\in\mathfrak{X}(M)\subset \overline{\mathfrak{X}}(M)$ and $\zeta\in\mathfrak{X}^{\perp}(M)$. Here $\mathrm{II}$ denotes the second fundamental form and $A_{\zeta}$ the shape operator (or Weingarten endomorphism) associated to $\zeta$. 
Every shape operator $A_{\zeta}$ is self-adjoint and the second fundamental form is symmetric, they are also related by the following formula
\begin{equation}\label{230321C}
g\left(A_{\zeta}V,W\right)  = \widetilde{g} \left(\mathrm{II}(V,W), \zeta \right).
\end{equation}
The mean
curvature vector field  is defined by
$\mathbf{H}=\frac{1}{n}\,\mathrm{trace}_{g}\mathrm{II}.$ From (\ref{230321C}) we have
$$
\mathrm{trace}(A_{\zeta})=n\widetilde{g}(\mathbf{H},\zeta).
$$

\bigskip

Although, we are interested here in the class of spacetimes given in Definition \ref{190723B}, there are several properties which can be stated in a more general setting.
Let $(B,g_B)$ be a two dimensional  oriented Lorentzian manifold and $(F,g_F)$ a $m$-dimensional connected Riemannian manifold. Fix $\lambda\in C^{\infty}(B)$ with $\lambda>0$,  we are interested in the Lorentzian warped product manifold  given by the   product manifold $\widetilde{M}=B\times F$ endowed with the Lorentzian metric 
$$\widetilde{g}=\pi_B^*(g_B)+(\lambda\circ\pi_B)^2\pi_F^*(g_F),$$ where $\pi_B$ and $\pi_F$ are the natural projections on $B$ and $F$, respectively \cite[Chap. 7]{One83}.
As usual, we denote the Lorentzian manifold  $(\widetilde{M},\widetilde{g})$ as $B\times_{\lambda} F$ and $\lambda$ is called the warping function.
 The set of vector fields $ \mathfrak{X}(B)$ and $\mathfrak{X}(F)$ can be lifted to $\mathfrak{X}(\widetilde{M})$. Typically, we use the same notation for a vector field and its lift and then,
 every vector field $E\in \mathfrak{X}(\widetilde{M})$ has a unique expression as $E=X+V$ where $X\in \mathfrak{X}(B)$ and  $V\in \mathfrak{X}(F)$.

For our aims here, we assume  there exists a global lightlike vector field $\xi\in\mathfrak{X}(B)\subset\mathfrak{X}(\widetilde{M})$. That is, we have $g_{B}(\xi, \xi)=0$ and $\xi_{x}\neq 0$ for all $x\in B$. 
Taking into account that $\mathrm{dim}\, B=2$, it is not difficult to show that there exists a $1$-form  $\alpha\in \Omega^{1}(B, \R)$ such that
\begin{equation}\label{250223B}
\nabla^{B}\xi=\alpha \otimes \xi\end{equation}
where  $\nabla^{B}$ is the Levi-Civita connection of $B$. The assumption on the existence of the vector field $\xi$ has the following key consequence. 

\begin{lemma}\label{190723A}
The distribution
$D_{\xi}=\{E\in\mathfrak{X}(\widetilde{M}):\widetilde{g}(E,\xi)=0\}$ on $\widetilde{M}$
is involutive. 
\end{lemma}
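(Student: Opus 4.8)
The plan is to verify the Frobenius condition: $D_\xi$ is involutive if and only if for all $E_1, E_2 \in D_\xi$ we have $[E_1,E_2]\in D_\xi$, i.e. $\widetilde g([E_1,E_2],\xi)=0$. Using that the Levi-Civita connection $\widetilde\nabla$ is torsion-free, write $[E_1,E_2]=\widetilde\nabla_{E_1}E_2-\widetilde\nabla_{E_2}E_1$, so it suffices to show $\widetilde g(\widetilde\nabla_{E_1}E_2,\xi)=\widetilde g(\widetilde\nabla_{E_2}E_1,\xi)$ for $E_1,E_2$ orthogonal to $\xi$. The key algebraic input is the hypothesis (\ref{250223B}), namely $\nabla^B\xi=\alpha\otimes\xi$ on the base; together with the warped-product connection formulas of \cite[Chap.~7]{One83} this should propagate to the statement that $\widetilde\nabla_E\xi$ is proportional to $\xi$ plus a controlled term, for every $E\in\mathfrak X(\widetilde M)$.

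The key steps, in order, are as follows. First, I would establish that $\widetilde\nabla_E\xi = \widetilde\alpha(E)\,\xi$ for every $E\in\mathfrak X(\widetilde M)$, where $\widetilde\alpha$ is the pullback $\pi_B^*\alpha$ of the $1$-form from (\ref{250223B}) (possibly corrected by a term involving $\xi\lambda/\lambda$ on the fiber directions). Concretely: for $E=X\in\mathfrak X(B)$ this is exactly (\ref{250223B}) lifted, since the base is totally geodesic in a warped product; for $E=V\in\mathfrak X(F)$, the warped-product formula gives $\widetilde\nabla_V\xi=\widetilde\nabla_\xi V=(\xi\lambda/\lambda)\,V$, which is orthogonal to $\xi$, and more to the point $\widetilde g(\widetilde\nabla_V\xi,\xi)=\tfrac12 V(\widetilde g(\xi,\xi))=0$ since $\widetilde g(\xi,\xi)=g_B(\xi,\xi)=0$ is constant. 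Second, using that $\widetilde g(\xi,\xi)\equiv 0$, differentiate to get $\widetilde g(\widetilde\nabla_E\xi,\xi)=0$ for all $E$; hence for $E_1\in D_\xi$,
$$
\widetilde g(\widetilde\nabla_{E_1}E_2,\xi)=E_1\big(\widetilde g(E_2,\xi)\big)-\widetilde g(E_2,\widetilde\nabla_{E_1}\xi)=0-\widetilde g(E_2,\widetilde\nabla_{E_1}\xi),
$$
and by step one $\widetilde\nabla_{E_1}\xi$ is (a function times) $\xi$ plus a vector orthogonal to $\xi$ lying along $F$; pairing with $E_2\in D_\xi$ and using $\widetilde g(E_2,\xi)=0$ shows the right-hand side reduces to a symmetric expression in $E_1,E_2$. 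Third, subtract the analogous identity with $E_1,E_2$ interchanged: the symmetric part cancels and we conclude $\widetilde g([E_1,E_2],\xi)=0$, so $D_\xi$ is involutive.

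The main obstacle I anticipate is bookkeeping in step one: one must be careful that the ``correction term'' coming from the warping function does not spoil the conclusion $\widetilde g(\widetilde\nabla_E\xi,\xi)=0$. This is saved by the crucial observation that $\xi$ is lightlike \emph{and tangent to the base}, so $\widetilde g(\xi,\xi)=g_B(\xi,\xi)$ is literally the zero function on $\widetilde M$; every term obtained by differentiating it vanishes identically, with no residual warping contribution. Once that is in hand, the Frobenius computation is a short torsion-free manipulation, and no integrability obstruction survives. (An entirely parallel argument gives involutivity of $D_\eta$.)
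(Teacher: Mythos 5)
Your proof is correct and follows essentially the same route as the paper: both arguments hinge on the recurrence $\nabla^{B}\xi=\alpha\otimes\xi$ together with metric compatibility and torsion-freeness, applied to lifted fields $E=X+V$. The only difference is organizational — the paper first reduces $\widetilde{g}([X+V,Y+W],\xi)$ to $g_{B}([X,Y],\xi)$ on the base and concludes there, whereas you keep the computation on $\widetilde{M}$ and observe that the fiber contribution $\widetilde{g}(E_{2},(\xi\lambda/\lambda)V_{1})$ is symmetric in $E_{1},E_{2}$ and so cancels.
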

\begin{proof} For  $X+V, Y+ W \in D_{\xi}$ a straightforward computation gives
$$
\widetilde{g}([X+V, Y+ W], \xi)=g_{B}([X,Y], \xi)\circ \pi_{B}.
$$
Taking into account that $X+V\in D_{\xi}$  if and only if $g_{B}(X,\xi)=0$, we obtain from (\ref{250223B}) that
$$g_{B}([X,Y], \xi)=-g_{B}(\nabla^{B}_{X}\xi, Y)+ g_{B}(\nabla^{B}_{Y}\xi, X)=0.$$ 
\end{proof}

Therefore,  through every point  $(x,p)\in \widetilde{M}$ passes  a maximal integral submanifold $\mathcal{L}$ of the distribution $D_{\xi}$ and we have a foliation of the manifold $\widetilde{M}$ by hypersurfaces. 
If we write $\gamma\colon I\to B$ for the maximal integral curve of the vector field $\xi$ with initial condition $\gamma(0)=x\in B$, then the hypersurface $\mathcal{L}$  is given by
$$
\mathcal{L}=\{(\gamma(t), p)\in B\times F: t\in I\, \, , p\in F\}.
$$
$\mathcal{L}$ inherits a degenerate metric tensor from $\widetilde{g}$ whose radical is spanned by the vector field $\xi\mid_{\mathcal{L}}$. 
Every  smooth section $\sigma= (\sigma_{B}, \mathrm{Id}_{F})$ of the natural projection $\mathcal{L}\to F$ provides a spacelike immersion in $\widetilde{M}$ with induced metric $g=\sigma^{*}(\widetilde{g})=(\lambda\circ \sigma_{B})^{2}g_{F}$. Hence the induced metric $g$ belongs to the same conformal class of $g_{F}$. In particular, $\mathcal{L}$ is a subset of the bundle of scales of $F$ for the conformal class of $g_{F}$ \cite[Chap. 1]{CS09}.

\begin{remark}
{\rm The projection $B\times_{\lambda}F \to B$ is a semi-Riemannian submersion \cite{One66}  (see also \cite[Chap. 7]{One83}).   In the terminology of semi-Riemannian submersions, every maximal integral submanifold $\mathcal{L}$ of the distribution $D_{\xi}$ is the horizontal lift of a maximal integral curve of the vector field $\xi$ on $B$.
}
\end{remark}

Now let $\Psi:M\rightarrow B\times_{\lambda} F$ be an arbitrary  spacelike immersion. The immersion $\Psi$ can be written  $$\Psi=(\Psi_B,\Psi_F),$$ where $\Psi_B= \pi_{B}\circ \Psi$ and $\Psi_F = \pi_{F}\circ \Psi$.
For the vector field  $ \xi |_{\Psi}$ we have 
$$
0=\widetilde{g}(\xi |_{\Psi},\xi |_{\Psi})=\widetilde{g}(\xi^{\top},\xi^{\top})+\widetilde{g}(\xi^{\perp},\xi^{\perp}).
$$
Hence, $\xi^{\perp}$ does not vanish at any point of $M$ and  $\widetilde{g}(\xi^{\perp}, \xi^{\perp})\leq 0$, in other words, $\xi^{\perp}$ is a causal normal vector field. On the other hand, the vector field $\xi^{\top}$ vanishes identically if and only if $M\subset \mathcal{L}$. In this case, we say that $M$ factors through the  integral hypersurface $\mathcal{L}$ of $D_{\xi}$.

\begin{remark}\label{300523A}
{\rm
Let recall that the two dimensional manifold $B$ is assumed to be orientable. Thus, the existence of the lightlike vector field $\xi$ implies that there is another  lightlike vector field $\eta \in \mathfrak{X}(B)$ which is uniquely determined by the normalization condition $g_{B}(\xi, \eta)=-1$.  As for $\xi$, we have $\nabla^{B}\eta= -\alpha \otimes \eta$ and the corresponding distribution $D_{\eta}$ is also involutive.
Every each maximal integral submanifold  $\mathcal{N}$ of  $D_{\eta}$ inherits a degenerate metric tensor from $\widetilde{g}$ whose radical is now spanned by the restriction of $\eta$ to $\mathcal{N}.$
}
\end{remark}

In order to be used later, let us recall the notion of parabolic Riemannian manifold.
A (non necessary complete) Riemannian 
manifold is parabolic if the only subharmonic functions
bounded from above that it admits are the constants. That is, a  Riemannian manifold $M$ is parabolic when $\Delta v \geq 0$ and $\sup_{M} v< +\infty$ for a smooth function $v\in C^{\infty}(M)$ implies $v$ must be constant
(see, for instance, \cite{Ka} and \cite{Gr}). From a 
physical point of view,   the parabolicity  
is equivalent to the recurrence of the Brownian motion on a Riemannian 
manifold \cite{Gr}.  Let us also recall that every 
complete Riemannian surface with non-negative 
Gaussian curvature is parabolic \cite{H}.  Even more,  every complete Riemannian surface with finite total curvature is parabolic \cite{H}.
In arbitrary 
dimension there is no clear relation between parabolicity
and sectional curvature.   Nevertheless, there exist sufficient conditions to 
ensure the parabolicity of a Riemannian manifold of arbitrary dimension 
based on the volume growth of its geodesic balls  \cite{AMR}.

\section{Generalized Schwarzschild  spacetimes}

From now on $ B\times_{\lambda} F$ is a generalized Schwarzschild  spacetime, Definition \ref{190723B}.  We can give two lightlike vector fields in $B$ as follows
\begin{equation}\label{11102022B}
\xi=\dfrac{1}{f^2}\partial t+\partial r\,\text{ and }\,\eta=\frac{1}{2}(\partial t-f^2\partial r)
\end{equation}
with $\widetilde{g}(\xi,\eta)=-1.$  
The Levi-Civita connection  $\nabla^B$ is directly computed from (\ref{17112022A}) as follows 
$$
\nabla^B_{\partial t}\partial t=f^3f'\partial r,\quad\quad \nabla^B_{\partial t}\partial r=\dfrac{f'}{f}\partial t\quad\quad\text{and}\quad\quad \nabla^B_{\partial r}\partial r=-\dfrac{f'}{f}\partial r
$$
where $f'=\partial_{r}f$. The Gauss curvature  of the metric $g_B$ is 
$
K^{B}=-\left((f')^2 + ff''\right).
$
 The $1$-form defined in (\ref{250223B}) satisfies
\begin{equation}\label{15112022C}
\alpha= f'\left(fdt-\dfrac{1}{f}dr\right).
\end{equation}
In particular, $\alpha(\xi)= \alpha(\eta)=0$ and then $\xi$ and $\eta$ are geodesic vector fields.

Let us recall that the Levi-Civita connection of $\widetilde{g}$ is given in \cite[Prop. 7.35]{One83} as follows. For  $X,Y\in \mathfrak{X}(B)\subset \mathfrak{X}(\widetilde{M})$ and  $V,W\in \mathfrak{X}(F)\subset \mathfrak{X}(\widetilde{M})$, we have
\begin{equation}\label{250223A}
\widetilde{\nabla}_{X}Y=\nabla^{B}_{X}Y, \quad \widetilde{\nabla}_{X}V=\widetilde{\nabla}_{V}X= \frac{X\lambda}{\lambda}V,\quad 
\widetilde{\nabla}_{V}W=-\frac{\widetilde{g}(V,W)}{\lambda}\nabla^{B}\lambda+ \nabla^{F}_{V}W,
\end{equation}
where  $\nabla^{B}$ and $\nabla^{F}$ are the Levi-Civita connections of $B$ and $F$, respectively. For every $h\in \mathcal{C}^{\infty}(B)$, we write $\nabla^{B}h$ for the gradient of $h$  with respect to the metric $g_{B}$. Besides  $\widetilde{\nabla}(h\circ \pi_{B})= \nabla^{B}h \circ \pi_{B}$, \cite[Lemma. 7.34]{One83}. Straightforward computations show for the natural coordinates $t$ and $r$ on $B$ and for the warping function $\lambda$ that
$$
\nabla^B t=-\dfrac{1}{f^2}\partial t,\quad \nabla^B r=f^2\partial r,\quad \nabla^B f=f^2f'\partial r\quad\text{and}\quad \nabla^B\lambda=-\dfrac{\lambda_t}{f^2}\partial t+f^2\lambda_r\partial r.
$$

\begin{remark}
{\rm Let $\mathcal{L}$ be an integral hypersurface of the distribution $D_{\xi}$. Recall that the null-Weingarten map  $b_{\xi}$ is defined for every point $p\in \mathcal{L}$ as
$$
b_{\xi}\colon T_{x}\mathcal{L}/\xi_{x}\to T_{x}\mathcal{L}/\xi_{x}, \quad [w]\mapsto [\widetilde{\nabla}_{w}\xi],
$$
where $[\,\,]$ denotes the class in the quotient vector space $T_{x}\mathcal{L}/\xi_{x}$,  see \cite{Gall}.  The lightlike manifold $\mathcal{L}$ is said to be totally geodesic when $b_{\xi}=0$. A direct computation from (\ref{250223A}) 
gives $b_{\xi}([w])= \frac{\xi \lambda}{\lambda}[w]$.
This formula implies that $\mathcal{L}$ is a totally geodesic lightlike hypersurface if and only if $\xi\lambda=0$. In a similar way, the integral hypersurfaces of $D_{\eta}$ are totally geodesic lightlike hypersurfaces if and only if $\eta\lambda=0$. 
}
\end{remark}

\begin{remark}
{\rm 
Lorentzian manifolds  admitting a global  parallel and lightlike vector field $\xi$ were introduced in \cite{B}. Such a Lorentzian manifolds are  called  Brinkmann spacetimes  Hence, a generalized Schwarzschild spacetime is a Brinkmann spacetime if and only if  $\alpha=0$.
Recall that a Lorentzian manifold $(\bar{M},\bar{g})$ is said to be static when admits a Killing vector field $K$ with $\bar{g}(K,K)<0$. The timelike vector field $\partial_{t}$ in a generalized Schwarzschild spacetime is Killing if and only if $\lambda_{t}=0.$ This is the case of the classical Schwarzschild spacetime.
}
\end{remark}

\begin{remark}
{\rm In a general setting, given a semi-Riemannian manifold $(M,g)$, a vector field $X\in \mathfrak{X}(M)$  is said to be recurrent when there is a $1$-form $\alpha$ on $M$ such that   $\nabla X=\alpha \otimes X$, where $\nabla$ is the Levi-Civita connection of $g$.
In particular, equations (\ref{250223B}) and (\ref{250223A}) imply that the vector fields $\xi$ and $\eta$ are recurrent.  This  property widely generalizes the Brinkmann spacetimes.  Lorentzian manifolds with recurrent lightlike vector fields have been studied in \cite{Leistner}.

}
\end{remark}

For $\Psi:M\rightarrow B\times_{\lambda} F$ a spacelike immersion in a generalized Schwarzschild spacetime, we have the smooth functions on $M$ given by
$
u=t\circ \Psi_{B}$ and  $v=r\circ \Psi_{B}.$
We have for the gradients of these functions with respect to the induced metric $g$ that
\begin{equation}\label{270223D}
\nabla u=-\frac{1}{(f\circ \Psi_{B})^2}\partial t^{\top}\quad\text{and}\quad \nabla v=(f\circ \Psi_{B})^2\partial r^{\top}
\end{equation}
and therefore
\begin{equation}\label{21112022A}
\xi^{\top}=\dfrac{1}{(f\circ \Psi_{B})^2}\nabla v-\nabla u\quad\text{and} \quad \eta^{\top}=-\dfrac{1}{2}\left(\nabla v+(f\circ \Psi_{B})^2\nabla u\right).
\end{equation}
These formulas (\ref{21112022A}) lead to the following characterization for spacelike immersions through integral submanifolds of the distributions $D_{\xi}$ or $D_{\eta}.$

\begin{lemma}\label{020323A}
A spacelike immersion $\Psi:M\rightarrow B\times_{\lambda} F$  in a generalized Schwarzschild  spacetime factors through an integral hypersurface  $\mathcal{L}$ (resp. $\mathcal{N}$) of the distribution $D_{\xi}$ $($resp. $D_{\eta})$ if and only if
$$
\nabla v=(f\circ \Psi_{B})^2 \nabla u \quad (\mathrm{resp.}\, \nabla v=- (f\circ \Psi_{B})^2 \nabla u ).
$$
\end{lemma}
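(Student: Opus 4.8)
The plan is to characterize $M \subset \mathcal{L}$ via the vanishing of $\xi^{\top}$ (already noted in the text just before Remark \ref{300523A}) and then translate this condition using the explicit formula (\ref{21112022A}) for $\xi^{\top}$ in terms of $\nabla u$ and $\nabla v$. Recall that $M$ factors through an integral hypersurface of $D_\xi$ precisely when $M \subset \mathcal{L}$ for some leaf $\mathcal{L}$, and the remark preceding Remark \ref{300523A} shows this happens if and only if $\xi^{\top}$ vanishes identically on $M$ (the argument there being that $\widetilde{g}(\xi^{\perp},\xi^{\perp}) \le 0$ with $\xi^{\perp}$ nowhere zero forces the dichotomy). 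So the statement reduces to: $\xi^{\top} \equiv 0$ if and only if $\nabla v = (f\circ\Psi_B)^2\,\nabla u$.

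First I would invoke (\ref{21112022A}), namely $\xi^{\top} = \dfrac{1}{(f\circ\Psi_B)^2}\nabla v - \nabla u$. Since $f > 0$ on $B$, multiplying by the nowhere-vanishing positive function $(f\circ\Psi_B)^2$ is harmless, and one gets immediately that $\xi^{\top} = 0$ at a point if and only if $\nabla v = (f\circ\Psi_B)^2 \nabla u$ at that point. Running this equivalence pointwise over all of $M$ gives the claim for $D_\xi$. For $D_\eta$ the argument is identical: $M$ factors through a leaf $\mathcal{N}$ of $D_\eta$ if and only if $\eta^{\top} \equiv 0$ (by the same reasoning as for $\xi$, using that $B$ is oriented so that $\eta$ exists and is lightlike, cf. Remark \ref{300523A}), and (\ref{21112022A}) gives $\eta^{\top} = -\tfrac12\big(\nabla v + (f\circ\Psi_B)^2 \nabla u\big)$, whose vanishing is equivalent to $\nabla v = -(f\circ\Psi_B)^2\nabla u$.

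There is essentially no obstacle here — the lemma is a direct bookkeeping consequence of the gradient formulas (\ref{270223D}) and their repackaging (\ref{21112022A}), together with the already-established fact that $\xi^{\top}$ (resp. $\eta^{\top}$) vanishes identically exactly when the immersion lands inside a single leaf. The only point that deserves a sentence of care is why "$\xi^{\top}$ vanishes at every point" is the same as "$M$ is contained in one leaf rather than being tangent to the foliation along a proper subset": this is because $D_\xi$ is involutive (Lemma \ref{190723A}) and $M$ is connected, so the set where $M$ is tangent to the distribution, if it is all of $TM$ at every point, integrates $M$ into a single maximal leaf; conversely if $M \subset \mathcal{L}$ then $T_xM \subset T_x\mathcal{L} = (D_\xi)_x$ forces $\xi^{\top}_x = 0$ for all $x$. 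I would state this last equivalence briefly and then close with the one-line computation above.
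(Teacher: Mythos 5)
Your proposal is correct and follows exactly the route the paper intends: the lemma is stated as an immediate consequence of formula (\ref{21112022A}) together with the observation, made just before Remark \ref{300523A}, that $M$ factors through a leaf precisely when $\xi^{\top}$ (resp.\ $\eta^{\top}$) vanishes identically. The extra sentence you add on why pointwise tangency of the connected $M$ to the involutive distribution forces containment in a single maximal leaf is a reasonable clarification but does not change the argument.
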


The spacelike immersion
$
F\hookrightarrow  B\times_{\lambda} F $ given by $ x\mapsto (t_{0}, r_{0}, x)
$
is called  the slice at level $(t_{0}, r_{0}).$ From \cite[Prop.  7.35]{One83},   its mean curvature vector field is
\begin{equation}\label{110923A}
    \mathbf{H}= -\frac{\nabla^{B}\lambda}{\lambda}(t_{0}, r_{0})
\end{equation}
 and for  $\lambda(t,r)=r$, this formula reduces to
$
\mathbf{H}= -\frac{f^{2}(r_{0})}{ r_{0}}\partial r\mid_{(r_{0}, t_{0})}.
$
Recall that  slices are totally umbilical spacelike embedded submanifolds, \cite[Chap. 7]{One83}  and
note that every  spacelike immersion which factors through an integral hypersurface of $D_{\xi}$ and, at the same time, through an integral hypersurface of $D_{\eta}$ must factors through a slice.

\begin{example}\label{280723A}
{\rm
As was mentioned in the Introduction section, the $(m+2)$-dimensional Minkowski spacetime $\L^{m+2}$ can be described in two ways as a generalized Schwarzschild  spacetime.
The first one is $B=\R^{2}$, $f(r)=1$, $\lambda(t,r)=1$ and $(F,g)=\E^{m}$. The lightlike vector fields in (\ref{11102022B}) are
$
\xi=\partial t+ \partial r
$
 and  
$\eta=\frac{1}{2}(\partial t-\partial r).
$
Hence, the leaves of  the lightlike foliations are lightlike hyperplanes in $\L^{m+2}.$
The second one is obtained by taking $B=\R \times \R_{+}$, $f(r)=1$, $\lambda(t,r)=r$ and $(F,g)=\S^{m}$. The lightlike vector fields in (\ref{11102022B}) are
$
\xi=\partial t + \partial r
$
 and  
$\eta=\frac{1}{2}(\partial t-\partial r).
$
The smooth map
\begin{equation}\label{16082023-1}
\varphi:(\R \times \R_{+})\times_{r} \S^{m}\to \L^{m+2}, \quad (t,r, x)\mapsto (t, rx)
\end{equation}
provides an isometry on the open subset $\{(t, p)\in \L^{m+2}: p\neq 0\}$. The  foliations by lightlike hypersurfaces given in Lemma \ref{190723A}
correspond via this isometry with the lightlike cones with vertex at the points $(t,0)\in \L^{m+2}.$

}
\end{example}

\section{Immersions in  generalized Schwarzschild spacetimes}

Along this section  $$\Psi:M\rightarrow B\times_{\lambda} F$$ is a fixed spacelike immersion which does not necessary factors through an integral submanifold of $D_{\xi}$ or $D_{\eta}$. 
For every vector field $V\in \mathfrak{X}(M)$ and $x\in M$, we denote 
$$V^{B}_{x}= T_{x}\Psi_{B}\cdot V_{x} \quad \textrm{and }\quad V^{F}_{x}= T_{x}\Psi_{F}\cdot V_{x}.$$ Since we  agree to ignore the differential map of $\Psi$, this means that
$
V=V^{B}+ V^{F}.
$
We get that 
\begin{equation}\label{090523A}
V^{B}=g(V, \nabla u)\partial t+ g(V, \nabla v)\partial r,
\end{equation}
and  from (\ref{270223D}), we have
\begin{equation}\label{17112022E}
(V^{B})^{\top}=-(f\circ \Psi_{B})^2V(u)\nabla u + \dfrac{1}{(f\circ\Psi_{B})^2}V(v)\nabla v.
\end{equation}
As a consequence of  (\ref{250223A}), we get
$$
\widetilde{\nabla}_{ V}(\xi\mid_{\Psi})=\alpha(V^{B})\xi\mid_{\Psi}+\Big(\dfrac{\xi\lambda}{\lambda}\circ \Psi_{B}\Big) V^F. 
$$
The Gauss and Weingarten formulas (\ref{shape}) imply that
\begin{equation}\label{17112022B}
\nabla_V\xi^{\top}+\mathrm{II}(V,\xi^{\top})- A_{\xi^{\bot}}V+\nabla^{\perp}_V\,\xi^{\bot}=\alpha(V^{B})\xi\mid_{\Psi}+\Big(\dfrac{\xi\lambda}{\lambda}\circ \Psi_{B}\Big) V^F.
\end{equation}
In particular for the tangent parts to $M$, we get 
\begin{equation}\label{270223B}
\nabla_V\xi^{\top}-A_{\xi^{\bot}}V=\alpha(V^{B})\xi^{\top}+\Big(\dfrac{\xi\lambda}{\lambda}\circ \Psi_{B}\Big)( V^{F})^{\top}.
\end{equation}
From  (\ref{17112022E}) and taking into account that
$
( V^{F})^{T}= V-( V^{B})^{T}
$, the equation (\ref{270223B}) reduces to
$$
\nabla_V\xi^{\top}-A_{\xi^{\bot}}V=\alpha(V^{B})\xi^{\top}+\Big(\dfrac{\xi\lambda}{\lambda}\circ \Psi_{B}\Big)\Big(V+(f\circ \Psi_{B})^2V(u)\nabla u - \dfrac{1}{(f\circ\Psi_{B})^2}V(v)\nabla v\Big)
$$
and then,
\begin{equation} \label{17112022G}
\begin{split}
\mathrm{div}(\xi^{\top}) - n\,\widetilde{g}(\mathbf{H},\xi^{\bot}) & = \Big(\dfrac{\xi\lambda}{\lambda}\circ \Psi_{B}\Big)\left[n+(f\circ \Psi_{B})^2\|\nabla u\|^2 -\dfrac{1}{(f\circ \Psi_{B})^2}\|\nabla v\|^2\right] \\
 & + (\Psi_{B}^{*}\alpha) (\xi^{\top}). 
\end{split}
\end{equation}
Note that $(\Psi_{B}^{*}\alpha) (\xi^{\top})=\alpha((\xi^{\top})^{B})$. A straightforward computation from (\ref{21112022A}) and (\ref{090523A}) gives 
$$
(\xi^{\top})^{B}=\Big(\frac{1}{(f\circ \Psi_{B})^2}g(\nabla u, \nabla v)- \| \nabla u\|^2\Big)\partial t +\Big(\frac{1}{(f\circ \Psi_{B})^2}\| \nabla v\|^2 -g(\nabla u, \nabla v) \Big)\partial r
$$
and then, from (\ref{15112022C}), we have
$$
(\Psi_{B}^{*}\alpha) (\xi^{\top})= - (ff'\circ \Psi_{B}) \| \xi^{\top}\|^{2}.
$$
On the other hand, one can compute that
$$
\mathrm{trace}_{g}(\Psi_{B}^{*}g_{B})=-(f\circ \Psi_{B})^2\|\nabla u\|^2+ \dfrac{1}{(f\circ \Psi_{B})^2}\|\nabla v\|^2= -2 g(\xi^{\top}, \eta^{\top}),
$$
where $\mathrm{trace}_{g}(\Psi_{B}^{*}g_{B})=\sum_{i=1}^n (\Psi_{B}^{*}g_{B})(E_{i}, E_{i})$ for a local orthonormal frame in $M$.

\begin{remark}
{\rm  Taking into account that
$
n=\mathrm{trace}_{g}(g)=\mathrm{trace}_{g}(\Psi_{B}^{*}g_{B})+(\lambda\circ \Psi_{B})^2 \mathrm{trace}_{g}(\Psi_{F}^{*}g_{F}),
$
we get  $g(\xi^{\top}, \eta^{\top})>-n/2$.
}
\end{remark}

\smallskip

From (\ref{17112022G}), the above computations give the following result.

\begin{lemma}\label{160523A}
Let $\Psi:M\rightarrow B\times_{\lambda} F$ be a spacelike immersion in a generalized Schwarzschild  spacetime. Then the following formula holds
$$
\mathrm{div}(\xi^{\top}) - n\,\widetilde{g}(\mathbf{H},\xi^{\bot})  = \Big(\dfrac{\xi\lambda}{\lambda}\circ \Psi_{B}\Big)\left[n+2g(\xi^{\top}, \eta^{\top})\right] - (ff'\circ \Psi_{B}) \| \xi^{\top}\|^{2}. 
$$
\end{lemma}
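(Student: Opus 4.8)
The statement follows by collecting the computations carried out in the text just before it, so the plan is essentially to assemble the pieces in the right order rather than to prove anything new. First I would start from the identity $\widetilde{\nabla}_{V}(\xi\mid_{\Psi})=\alpha(V^{B})\xi\mid_{\Psi}+\big(\tfrac{\xi\lambda}{\lambda}\circ\Psi_{B}\big)V^{F}$, which is a direct consequence of the warped-product connection formulas (\ref{250223A}) together with the recurrence relation (\ref{250223B}). Decomposing both sides via the Gauss and Weingarten formulas (\ref{shape}) gives (\ref{17112022B}), and taking the tangent part to $M$ yields (\ref{270223B}). Rewriting $(V^{F})^{\top}=V-(V^{B})^{\top}$ and substituting the expression (\ref{17112022E}) for $(V^{B})^{\top}$ puts the tangential equation in a form whose $g$-trace over a local orthonormal frame can be taken directly.

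The second step is to take that trace. On the left side, $\mathrm{trace}_{g}(V\mapsto\nabla_{V}\xi^{\top})=\mathrm{div}(\xi^{\top})$ and $\mathrm{trace}_{g}(A_{\xi^{\bot}})=n\,\widetilde{g}(\mathbf{H},\xi^{\bot})$ by (\ref{230321C}); on the right side the term in braces contributes $\big(\tfrac{\xi\lambda}{\lambda}\circ\Psi_{B}\big)\big[n+(f\circ\Psi_{B})^{2}\|\nabla u\|^{2}-\tfrac{1}{(f\circ\Psi_{B})^{2}}\|\nabla v\|^{2}\big]$ and the $\alpha$-term contributes $(\Psi_{B}^{*}\alpha)(\xi^{\top})=\alpha((\xi^{\top})^{B})$. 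This is exactly equation (\ref{17112022G}). Next I would simplify each of the two bracketed quantities using the two auxiliary identities already established in the text: $\mathrm{trace}_{g}(\Psi_{B}^{*}g_{B})=-(f\circ\Psi_{B})^{2}\|\nabla u\|^{2}+\tfrac{1}{(f\circ\Psi_{B})^{2}}\|\nabla v\|^{2}=-2g(\xi^{\top},\eta^{\top})$, which turns the brace into $n+2g(\xi^{\top},\eta^{\top})$; and the computation of $(\xi^{\top})^{B}$ from (\ref{21112022A}) and (\ref{090523A}) combined with the explicit form (\ref{15112022C}) of $\alpha$, which gives $(\Psi_{B}^{*}\alpha)(\xi^{\top})=-(ff'\circ\Psi_{B})\|\xi^{\top}\|^{2}$.

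Substituting these two simplifications into (\ref{17112022G}) gives the asserted formula, and the proof is complete. The only mildly delicate point — and the place I would be most careful — is the verification that $\alpha((\xi^{\top})^{B})=-(ff'\circ\Psi_{B})\|\xi^{\top}\|^{2}$: one must expand $(\xi^{\top})^{B}$ in the $\partial t,\partial r$ basis, pair it against $\alpha=f'(fdt-\tfrac1f dr)$, and then recognize the resulting combination of $\|\nabla u\|^{2}$, $\|\nabla v\|^{2}$ and $g(\nabla u,\nabla v)$ as precisely $(f\circ\Psi_{B})^{2}\,\widetilde{g}(\xi^{\top},\xi^{\top})$ using (\ref{21112022A}); everything else is bookkeeping with traces. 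Note also that $\|\xi^{\top}\|^{2}$ here is the (nonnegative) norm with respect to the Riemannian metric $g$, consistent with the causal-character discussion preceding Lemma \ref{020323A}.
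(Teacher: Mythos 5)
Your proposal is correct and follows essentially the same route as the paper: the same starting identity for $\widetilde{\nabla}_{V}(\xi\mid_{\Psi})$, the same tangential decomposition and trace leading to (\ref{17112022G}), and the same two auxiliary simplifications identifying the bracket with $n+2g(\xi^{\top},\eta^{\top})$ and $(\Psi_{B}^{*}\alpha)(\xi^{\top})$ with $-(ff'\circ\Psi_{B})\|\xi^{\top}\|^{2}$. The delicate point you flag is indeed the only nontrivial computation, and it checks out since $\xi^{\top}=\frac{1}{(f\circ\Psi_{B})^{2}}\nabla v-\nabla u$ makes the quadratic combination a perfect square.
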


\begin{remark}
{\rm We know that $\xi^{\top}=0$ for a spacelike immersion which factors through an integral hypersurface  $\mathcal{L}$ of $D_{\xi}$. In this case, the above Lemma reduces to 
$$
\widetilde{g}(\mathbf{H},\xi)  = -\dfrac{\xi\lambda}{\lambda}\circ \Psi_{B}. 
$$
}
\end{remark}

The following  result
 provides an integral characterization for  compact spacelike immersions through integral hypersurfaces of $D_{\xi}$.

\begin{theorem}\label{070323C}
Assume $M$ is compact and $\Psi:M\rightarrow B\times_{\lambda} F$ is a spacelike immersion in a generalized Schwarzschild spacetime  with $f'> 0$ (resp. $f'<0$). Then 
\begin{equation}\label{160523C}
\int_{M} \Big[ n\,\widetilde{g}(\mathbf{H},\xi^{\bot}) +\Big(\dfrac{\xi\lambda}{\lambda}\circ \Psi_{B}\Big)\left[n+2g(\xi^{\top}, \eta^{\top})\right]\Big]\, d\mu_{g} \geq 0.\quad (\textrm{resp.} \leq 0).
\end{equation}
The equality holds if and only if $M$ factors through an integral hypersurface  $\mathcal{L}$ of $D_{\xi}.$
\end{theorem}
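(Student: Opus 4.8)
The plan is to obtain the statement by integrating the pointwise identity of Lemma~\ref{160523A} over the compact manifold $M$ and then inspecting signs. First I would isolate the divergence term in Lemma~\ref{160523A}, writing
$$
\mathrm{div}(\xi^{\top}) = n\,\widetilde{g}(\mathbf{H},\xi^{\bot}) + \Big(\dfrac{\xi\lambda}{\lambda}\circ \Psi_{B}\Big)\big[n+2g(\xi^{\top}, \eta^{\top})\big] - (ff'\circ \Psi_{B})\,\|\xi^{\top}\|^{2}.
$$
Since $M$ is compact (and without boundary), the divergence theorem gives $\int_{M}\mathrm{div}(\xi^{\top})\,d\mu_{g}=0$, so integrating the displayed identity yields the exact formula
$$
\int_{M} \Big[ n\,\widetilde{g}(\mathbf{H},\xi^{\bot}) +\Big(\dfrac{\xi\lambda}{\lambda}\circ \Psi_{B}\Big)\big[n+2g(\xi^{\top}, \eta^{\top})\big]\Big]\, d\mu_{g} = \int_{M} (ff'\circ \Psi_{B})\,\|\xi^{\top}\|^{2}\, d\mu_{g}.
$$

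Next I would read off the sign. By Definition~\ref{190723B} we have $f>0$ everywhere, so the sign of $ff'\circ\Psi_{B}$ coincides with the sign of $f'$; since $\|\xi^{\top}\|^{2}\geq 0$, the right-hand integrand is nonnegative when $f'>0$ and nonpositive when $f'<0$. Hence the left-hand side is $\geq 0$ in the first case and $\leq 0$ in the second, which is exactly inequality~(\ref{160523C}).

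For the equality statement: in either case the right-hand integrand $(ff'\circ\Psi_{B})\,\|\xi^{\top}\|^{2}$ is a continuous function of constant (weak) sign, so its integral vanishes if and only if it vanishes identically on $M$; as $ff'\circ\Psi_{B}$ is nowhere zero, this forces $\|\xi^{\top}\|^{2}\equiv 0$, i.e.\ $\xi^{\top}=0$ on all of $M$. By the discussion preceding Remark~\ref{300523A}, $\xi^{\top}$ vanishes identically if and only if $M$ factors through an integral hypersurface $\mathcal{L}$ of $D_{\xi}$; conversely, if $M$ factors through such an $\mathcal{L}$ then $\xi^{\top}=0$ and the right-hand side of the integral identity is $0$, giving equality. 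I do not expect any genuine obstacle: once Lemma~\ref{160523A} is available the argument is just the divergence theorem plus a sign inspection, and the only step that deserves an explicit line is the passage from vanishing integral to pointwise vanishing in the equality case, which uses continuity of the integrand together with the fixed sign of $f'$.
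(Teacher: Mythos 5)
Your proposal is correct and follows exactly the paper's own argument: integrate the identity of Lemma \ref{160523A} over the compact manifold $M$, kill the divergence term by the divergence theorem, and note that $f>0$ makes the sign of $ff'\circ\Psi_{B}$ that of $f'$, so equality forces $\xi^{\top}\equiv 0$, i.e.\ $M$ factors through a leaf of $D_{\xi}$. The only difference is that you spell out the passage from vanishing integral to pointwise vanishing, which the paper leaves implicit; no further comment is needed.
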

\begin{proof}
Suppose the case $f'> 0$.   The inequality (\ref{160523C}) is a direct consequence of Lemma \ref{160523A} and the classical divergence theorem.
Furthermore,   (\ref{160523C}) becomes an equality if and only if
$$
\int_{M}(ff'\circ \Psi_{B})\| \xi^{\top}\|^{2}\, d\mu_{g}=0.
$$
Since $f'>0$ and the immersion is spacelike, we get  $\xi^{\top}=0$ and this fact ends the proof.
The proof for $f'<0$ works in a similar way.

\end{proof}

\begin{remark}\label{120823A}
{\rm  We have for the family of functions given in (\ref{app}) that
$$
 (ff')(r)=\dfrac{-2\Lambda r^{2m}+ m(m^2-1)(\mathbf{M}r^{m-1}- q^2)}{m(m+1)r^{2m-1}}.
$$
Therefore the assumption $f'>0$ is satisfied when $\Lambda\leq 0$ and $\mathbf{M}v^{m-1} >q^2$. In particular, it holds for the exterior Schwarzschild  spacetime.  

}

\end{remark}

\bigskip

We just sketch the proof of Lemma \ref{160523A} for the lightlike vector field $\eta$.
The condition $\widetilde{g}(\xi, \eta)=-1$ implies $\nabla^{B}\eta= -\alpha \otimes \eta$ (see Remark \ref{300523A}) and then
$$
 \widetilde{\nabla}_{ V}(\eta\mid_{\Psi})=-\alpha(V^{B})\eta\mid_{\Psi}+\Big(\dfrac{\eta\lambda}{\lambda}\circ \Psi_{B}\Big) V^F. 
$$
From the Gauss and Weingarten formulas we have
\begin{equation}\label{240523A}
\nabla_V\eta^{\top}+\mathrm{II}(V,\eta^{\top})- A_{\eta^{\bot}}V+\nabla^{\perp}_V\,\eta^{\bot}=-\alpha(V^{B})\eta\mid_{\Psi}+\Big(\dfrac{\eta\lambda}{\lambda}\circ \Psi_{B}\Big) V^F,
\end{equation}and taking tangent parts, we get 
\begin{equation}\label{210523C}
\nabla_V\eta^{\top}-A_{\eta^{\bot}}V=-\alpha(V^{B})\eta^{\top}+\Big(\dfrac{\eta\lambda}{\lambda}\circ \Psi_{B}\Big) (V^F)^{\top}.
\end{equation}
Hence we have
\begin{equation*}
\begin{split}
\mathrm{div}(\eta^{\top}) - n\,\widetilde{g}(\mathbf{H},\eta^{\bot}) & = \Big(\dfrac{\eta\lambda}{\lambda}\circ \Psi_{B}\Big)\left[n+(f\circ \Psi_{B})^2\|\nabla u\|^2 -\dfrac{1}{(f\circ \Psi_{B})^2}\|\nabla v\|^2\right] \\
 & -(\Psi_{B}^{*}\alpha) (\eta^{\top}). 
\end{split} 
\end{equation*}
A straightforward computation from (\ref{15112022C}) shows that
$$(\Psi^{*}\alpha)(\eta^{\top})=\Big(\frac{ff'}{2}\circ \Psi_{B}\Big)\mathrm{trace}_{g}(\Psi_{B}^{*}g_{B}).$$

\noindent Therefore,   the corresponding version of  Lemma \ref{160523A} to the vector field $\eta$ reads as follows.

\begin{lemma}
Let $\Psi:M\rightarrow B\times_{\lambda} F$ be a spacelike immersion in a generalized Schwarzschild spacetime. Then the following formula holds

\begin{equation}\label{070323B}
\mathrm{div}(\eta^{\top}) - n\,\widetilde{g}(\mathbf{H},\eta^{\bot}) = \Big(\dfrac{\eta\lambda}{\lambda}\circ \Psi_{B}\Big)\left[n+2 g(\xi^{\top}, \eta^{\top})\right]+
(ff'\circ \Psi_{B})g( \xi^{\top}, \eta^{\top}).
\end{equation}
For a submanifold $M$ which factors through an integral hypersurface $\mathcal{N}$ of $D_{\eta}$ we have 
$$
\widetilde{g}(\mathbf{H},\eta)  = -\dfrac{\eta\lambda}{\lambda}\circ \Psi_{B}. 
$$
\end{lemma}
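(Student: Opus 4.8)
The plan is to mimic exactly the computation carried out for $\xi$ in equations (\ref{17112022B})--(\ref{17112022G}), replacing $\xi$ by $\eta$ and keeping track of the sign change coming from $\nabla^{B}\eta = -\alpha\otimes\eta$. First I would recall from Remark \ref{300523A} that $\nabla^{B}\eta = -\alpha\otimes\eta$, and combine this with the warped-product connection formulas (\ref{250223A}) to obtain $\widetilde{\nabla}_{V}(\eta\mid_{\Psi}) = -\alpha(V^{B})\,\eta\mid_{\Psi} + \big(\tfrac{\eta\lambda}{\lambda}\circ\Psi_{B}\big)V^{F}$, valid for all $V\in\mathfrak{X}(M)$. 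Applying the Gauss and Weingarten formulas (\ref{shape}) and decomposing into tangent and normal parts gives equations (\ref{240523A}) and (\ref{210523C}). This is the routine transcription step and presents no obstacle.

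Next I would take the $g$-trace (i.e., sum over a local orthonormal frame $E_{i}$) of (\ref{210523C}). The trace of $V\mapsto\nabla_{V}\eta^{\top}$ is $\mathrm{div}(\eta^{\top})$, the trace of $V\mapsto A_{\eta^{\bot}}V$ is $\mathrm{trace}(A_{\eta^{\bot}}) = n\,\widetilde{g}(\mathbf{H},\eta^{\bot})$ by the displayed identity following (\ref{230321C}), and the trace of $V\mapsto\alpha(V^{B})\eta^{\top}$ is $(\Psi_{B}^{*}\alpha)(\eta^{\top}) = \alpha((\eta^{\top})^{B})$. For the last term, using $(V^{F})^{\top} = V - (V^{B})^{\top}$ together with (\ref{17112022E}) one finds, just as in the $\xi$-case, that $\mathrm{trace}_{g}$ of $V\mapsto (V^{F})^{\top}$ equals $n + (f\circ\Psi_{B})^{2}\|\nabla u\|^{2} - (f\circ\Psi_{B})^{-2}\|\nabla v\|^{2}$, which by the identity $\mathrm{trace}_{g}(\Psi_{B}^{*}g_{B}) = -2g(\xi^{\top},\eta^{\top})$ recorded just before Lemma \ref{160523A} equals $n + 2g(\xi^{\top},\eta^{\top})$. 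Assembling these pieces yields the intermediate formula displayed in the excerpt, with the $-(\Psi_{B}^{*}\alpha)(\eta^{\top})$ term.

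It then remains to evaluate $(\Psi_{B}^{*}\alpha)(\eta^{\top}) = \alpha((\eta^{\top})^{B})$ explicitly. Using (\ref{21112022A}) for $\eta^{\top}$ and (\ref{090523A}) to compute its $B$-component, one gets $(\eta^{\top})^{B} = -\tfrac{1}{2}\big(g(\nabla u,\nabla v) + (f\circ\Psi_{B})^{2}\|\nabla u\|^{2}\big)\partial t - \tfrac{1}{2}\big((f\circ\Psi_{B})^{-2}\|\nabla v\|^{2} + g(\nabla u,\nabla v)\big)\partial r$; contracting with $\alpha = f'(f\,dt - \tfrac1f dr)$ from (\ref{15112022C}) and simplifying produces $(\Psi_{B}^{*}\alpha)(\eta^{\top}) = \big(\tfrac{ff'}{2}\circ\Psi_{B}\big)\mathrm{trace}_{g}(\Psi_{B}^{*}g_{B}) = -(ff'\circ\Psi_{B})\,g(\xi^{\top},\eta^{\top})$. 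Substituting this (with its minus sign from the intermediate formula) gives precisely (\ref{070323B}). Finally, for a submanifold factoring through an integral hypersurface $\mathcal{N}$ of $D_{\eta}$ we have $\eta^{\top} = 0$, hence $\mathrm{div}(\eta^{\top}) = 0$ and $g(\xi^{\top},\eta^{\top}) = 0$, so (\ref{070323B}) collapses to $\widetilde{g}(\mathbf{H},\eta) = -\tfrac{\eta\lambda}{\lambda}\circ\Psi_{B}$ (using $\widetilde{g}(\mathbf{H},\eta) = \widetilde{g}(\mathbf{H},\eta^{\bot})$ since $\mathbf{H}$ is normal).

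The only point requiring care — the "main obstacle," though a mild one — is the bookkeeping of signs: the connection identity for $\eta$ carries an overall minus relative to $\xi$, which flips the sign in front of $\alpha(V^{B})$ in (\ref{210523C}) and hence in front of $(\Psi_{B}^{*}\alpha)(\eta^{\top})$ in the trace, and one must simultaneously note that the algebraic evaluation of $\alpha((\eta^{\top})^{B})$ itself produces a factor $\tfrac12\mathrm{trace}_{g}(\Psi_{B}^{*}g_{B})$ rather than $-\|\xi^{\top}\|^{2}$ as in the $\xi$-case, because $\eta$ is not null-orthogonal to itself in the same way $\xi$ effectively was. Tracking both sign contributions correctly is what yields the final $+(ff'\circ\Psi_{B})g(\xi^{\top},\eta^{\top})$ rather than a minus, and this is the step where an error would most easily creep in.
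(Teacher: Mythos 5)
Your proposal follows exactly the paper's own argument: use $\nabla^{B}\eta=-\alpha\otimes\eta$ to transcribe the $\xi$-computation with the sign of $\alpha$ flipped, trace the tangent part (\ref{210523C}), and evaluate $(\Psi_{B}^{*}\alpha)(\eta^{\top})=\bigl(\tfrac{ff'}{2}\circ\Psi_{B}\bigr)\mathrm{trace}_{g}(\Psi_{B}^{*}g_{B})=-(ff'\circ\Psi_{B})\,g(\xi^{\top},\eta^{\top})$, so it is correct and essentially identical to the paper's sketch. One minor slip worth fixing: by (\ref{090523A}) the $\partial r$-coefficient of $(\eta^{\top})^{B}$ is $g(\eta^{\top},\nabla v)=-\tfrac12\bigl(\|\nabla v\|^{2}+(f\circ\Psi_{B})^{2}g(\nabla u,\nabla v)\bigr)$, which differs from the expression you display by a factor of $(f\circ\Psi_{B})^{2}$; with the corrected coefficient the $g(\nabla u,\nabla v)$ cross-terms cancel against $\alpha=f'(f\,dt-\tfrac1f dr)$ and one lands on exactly the value of $(\Psi_{B}^{*}\alpha)(\eta^{\top})$ you state, so the conclusion is unaffected.
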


\begin{remark}
{\rm  From (\ref{070323B}) and for $M$ compact, we get
$$
\int_{M} \Big[ n\,\widetilde{g}(\mathbf{H},\eta^{\bot}) +\Big(\dfrac{\eta\lambda}{\lambda}\circ \Psi_{B}\Big)\left[n+ 2 g(\xi^{\top}, \eta^{\top})\right]\Big]\, d\mu_{g} =
-\int_{M}(ff'\circ \Psi_{B})g( \xi^{\top}, \eta^{\top})\, d \mu_{g}.
$$
The right-hand side of this integral formula has no prescribed sign although we impose $f'$ to be  signed.   Hence, a similar result to Theorem  \ref{070323C}  does not hold for the vector field $\eta$.
}
\end{remark}

\section{Immersions  through lightlike integral hypersurfaces}

In this section,  we will focus on spacelike immersions  $\Psi:M\rightarrow B\times_{\lambda} F$ through lightlike integral hypersurfaces of the distributions $D_{\xi}$ or $D_{\eta}.$ 
We write $\mathcal{L}$ (resp.  $\mathcal{N}$) for a  general integral hypersurface of  $D_{\xi}$ (resp.  $D_{\eta}$).
The following results specialize several formulas of the above sections for theses cases. For example, the formula (\ref{21112022A}) using Lemma \ref{020323A}. 

\begin{lemma}\label{210523D}
Let $\Psi:M\rightarrow B\times_{\lambda} F$ be a spacelike immersion   through an integral hypersurface $\mathcal{L}$ of $D_{\xi}$. The following formulas hold
\begin{enumerate}
\item $\alpha(V^B)=0$ for every $V\in \mathfrak{X}(M).$

\item $\eta^{\top}=- \nabla v$.

\item $V^{B}= g(V, \nabla v)\xi$.   In particular, we get  $(V^B)^{\top}=0$ and $(V^{F})^{\top}=V.$

\item When $M$ has codimension two,   the normal tangent bundle $TM^{\perp}$ is spanned by the normal lightlike vector fields $\xi$ and 
$$
\ell^{\xi}=-\frac{\| \nabla v\|^{2}}{2}\xi +\eta^{\perp}.
$$
We also have  $\widetilde{g}(\xi, \ell^{\xi})=-1$.

\end{enumerate}

\end{lemma}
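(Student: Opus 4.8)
The plan is to prove the four assertions in sequence, exploiting that $M$ factoring through $\mathcal{L}$ means exactly $\xi^{\top}=0$ (equivalently, by Lemma \ref{020323A}, $\nabla v=(f\circ\Psi_B)^2\nabla u$), and feeding this into the general formulas of Section 4. For item (1), I would start from $\alpha$ as given in (\ref{15112022C}) and the general expression (\ref{090523A}) for $V^B=g(V,\nabla u)\partial t+g(V,\nabla v)\partial r$; substituting $\nabla u=(f\circ\Psi_B)^{-2}\nabla v$ and contracting with $\alpha=f'(f\,dt-\tfrac1f dr)$ should make the two terms cancel identically. Concretely $\alpha(V^B)=(f'f\circ\Psi_B)g(V,\nabla u)-(f'/f\circ\Psi_B)g(V,\nabla v)$, and the factoring condition kills this.

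For item (3), I would again use (\ref{090523A}): since $\nabla u=(f\circ\Psi_B)^{-2}\nabla v$, we get $V^B=g(V,\nabla v)\big((f\circ\Psi_B)^{-2}\partial t+\partial r\big)=g(V,\nabla v)\,\xi$ by the definition (\ref{11102022B}) of $\xi$. Because $\xi^{\top}=0$ on $M$, taking tangent parts gives $(V^B)^{\top}=0$, and then $(V^F)^{\top}=V-(V^B)^{\top}=V$. Item (2) is then immediate from (\ref{21112022A}): $\eta^{\top}=-\tfrac12\big(\nabla v+(f\circ\Psi_B)^2\nabla u\big)$, and substituting $(f\circ\Psi_B)^2\nabla u=\nabla v$ yields $\eta^{\top}=-\nabla v$.

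For item (4), assume $\mathrm{codim}\,M=2$. The normal bundle $TM^{\perp}$ is a rank-two Lorentzian bundle; $\xi^{\bot}=\xi|_{\Psi}$ (since $\xi^{\top}=0$) is one lightlike normal direction, nowhere zero. I would produce the second lightlike normal as the normal part of $\eta|_{\Psi}$, suitably corrected. Since $\eta^{\top}=-\nabla v$ by (2), we have $\eta|_{\Psi}=\eta^{\bot}-\nabla v$, and I set $\ell^{\xi}:=-\tfrac12\|\nabla v\|^2\xi+\eta^{\bot}$, which is manifestly normal. To check it is lightlike and normalized: compute $\widetilde g(\ell^{\xi},\xi)=-\tfrac12\|\nabla v\|^2\widetilde g(\xi,\xi)+\widetilde g(\eta^{\bot},\xi)$; the first term vanishes as $\xi$ is lightlike, and $\widetilde g(\eta^{\bot},\xi)=\widetilde g(\eta^{\bot},\xi^{\bot})=\widetilde g(\eta|_\Psi,\xi|_\Psi)-\widetilde g(\eta^{\top},\xi^{\top})=-1-0=-1$, giving $\widetilde g(\xi,\ell^{\xi})=-1$. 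For $\widetilde g(\ell^{\xi},\ell^{\xi})$, expand using $\widetilde g(\xi,\xi)=0$, $\widetilde g(\eta^{\bot},\xi)=-1$, and $\widetilde g(\eta^{\bot},\eta^{\bot})=\widetilde g(\eta|_\Psi,\eta|_\Psi)-\widetilde g(\eta^{\top},\eta^{\top})=0-\|\nabla v\|^2$; collecting terms gives $\widetilde g(\ell^{\xi},\ell^{\xi})=-\|\nabla v\|^2\cdot(-1)\cdot 2\cdot(-\tfrac12)+(-\|\nabla v\|^2)=0$. Finally, $\xi$ and $\ell^{\xi}$ span $TM^{\perp}$ because they are linearly independent (their mutual inner product is $-1\neq 0$) in a rank-two space.

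The only mildly delicate point is the bookkeeping in item (4): one must be careful that $\widetilde g(\eta^{\bot},\eta^{\bot})=-\|\eta^{\top}\|^2=-\|\nabla v\|^2$ (using that $\eta|_\Psi$ is lightlike and (2)), since a sign slip there propagates into the lightlike check for $\ell^{\xi}$. Everything else is a direct substitution of the factoring condition into already-established identities, so no genuine obstacle is anticipated.
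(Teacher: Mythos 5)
Your argument is correct and is essentially the paper's own (the paper states this lemma without a separate proof, as a direct specialization of (\ref{090523A}), (\ref{21112022A}) and (\ref{15112022C}) via the factoring condition $\nabla v=(f\circ\Psi_B)^2\nabla u$, i.e. $\xi^{\top}=0$, which is exactly what you do). One small slip in your final line of item (4): the cross term is $2\cdot\bigl(-\tfrac12\|\nabla v\|^2\bigr)\cdot\widetilde g(\xi,\eta^{\bot})=+\|\nabla v\|^2$, whereas the product you wrote evaluates to $-\|\nabla v\|^2$; with the correct sign the sum is $\|\nabla v\|^2-\|\nabla v\|^2=0$ as claimed, so this is only a typographical error, not a gap.
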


\begin{lemma}\label{080623A}
Let $\Psi:M\rightarrow B\times_{\lambda} F$ be a spacelike immersion   through an integral hypersurface $\mathcal{N}$ of $D_{\eta}$. The following formulas hold
\begin{enumerate}
\item $\alpha(V^B)=-\left(\dfrac{2f'}{f}\circ \Psi_{B}\right) g(V, \nabla v)$ for every $V\in \mathfrak{X}(M).$

\item $\xi^{\top}=\left(\dfrac{2}{f^2}\circ \Psi_{B}\right) \nabla v$.

\item $V^{B}=-\left(\dfrac{2}{f^2}\circ \Psi_{B}\right) g(V,\nabla v)\eta$. In particular, we get $(V^B)^{\top}=0$ and  $(V^{F})^{\top}=V.$

\item When $M$ has codimension two,  the normal tangent bundle $TM^{\perp}$ is spanned by the normal null vector fields $\eta$ and 
$$
\ell^{\eta}=\xi^{\perp}-\frac{2\| \nabla v\|^{2}}{(f\circ \Psi_{B})^4}\eta.
$$
We also have  $\widetilde{g}(\eta, \ell^{\eta})=-1$.
\end{enumerate}
\end{lemma}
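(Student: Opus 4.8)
The plan is to mirror the proof of the companion Lemma \ref{210523D}, exploiting the symmetry between $\xi$ and $\eta$; the single geometric input is the hypothesis itself. Saying that $\Psi$ factors through an integral hypersurface $\mathcal{N}$ of $D_{\eta}$ means precisely $\eta^{\top}=0$, i.e. $\eta|_{\Psi}\in\mathfrak{X}^{\perp}(M)$ (see Remark \ref{300523A}), and by Lemma \ref{020323A} this is equivalent to $\nabla v=-(f\circ\Psi_{B})^{2}\nabla u$, that is, $\nabla u=-\frac{1}{(f\circ\Psi_{B})^{2}}\nabla v$. Every item is then obtained by feeding this identity into a formula already established in Section 4.

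For item (2), substituting $\nabla u=-\frac{1}{(f\circ\Psi_{B})^{2}}\nabla v$ into the first identity of (\ref{21112022A}) makes the two summands coincide, giving $\xi^{\top}=\frac{2}{(f\circ\Psi_{B})^{2}}\nabla v$. For item (3), the same substitution in (\ref{090523A}) yields $V^{B}=g(V,\nabla v)\!\left(-\frac{1}{(f\circ\Psi_{B})^{2}}\partial t+\partial r\right)$, and from (\ref{11102022B}) one recognizes $-\frac{1}{f^{2}}\partial t+\partial r=-\frac{2}{f^{2}}\eta$, which is the asserted formula; since $\eta|_{\Psi}$ is normal we get $(V^{B})^{\top}=0$, and then $(V^{F})^{\top}=V-(V^{B})^{\top}=V$ because $V$ is tangent. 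For item (1), it is cleanest to evaluate the $1$-form (\ref{15112022C}) directly on $V^{B}=g(V,\nabla v)\!\left(-\frac{1}{(f\circ\Psi_{B})^{2}}\partial t+\partial r\right)$, reading off $dt(V^{B})$ and $dr(V^{B})$: the two terms carrying $f'$ add up and produce $\alpha(V^{B})=-\frac{2f'}{f}\circ\Psi_{B}\cdot g(V,\nabla v)$.

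For item (4), the normal bundle of a codimension-two spacelike submanifold of a Lorentzian manifold has rank two and Lorentzian signature, and $\eta|_{\Psi}$ is a nowhere-vanishing lightlike section of it, so it suffices to produce a second lightlike normal field not proportional to $\eta$. Take $\ell^{\eta}:=\xi^{\perp}-\frac{2\|\nabla v\|^{2}}{(f\circ\Psi_{B})^{4}}\eta$, which is normal because $\xi^{\perp}$ and $\eta$ are. Using $\widetilde{g}(\eta,\eta)=0$ together with $\widetilde{g}(\eta,\xi^{\perp})=\widetilde{g}(\eta^{\perp},\xi^{\perp})=-1$ — the last equality coming from splitting $\widetilde{g}(\eta,\xi)=-1$ and using $\eta=\eta^{\perp}$ — one finds $\widetilde{g}(\eta,\ell^{\eta})=-1$; and since $\widetilde{g}(\xi^{\perp},\xi^{\perp})=-\|\xi^{\top}\|^{2}=-\frac{4\|\nabla v\|^{2}}{(f\circ\Psi_{B})^{4}}$ by item (2), one gets $\widetilde{g}(\ell^{\eta},\ell^{\eta})=0$. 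Because $\widetilde{g}(\eta,\ell^{\eta})=-1\neq0$, the fields $\eta$ and $\ell^{\eta}$ are linearly independent and hence span $TM^{\perp}$. I do not anticipate a genuine obstacle: the whole argument is computational and parallel to Lemma \ref{210523D}; the only point where care is needed is the sign in $\widetilde{g}(\eta,\xi^{\perp})=-1$, which is settled at once by the decomposition $-1=\widetilde{g}(\eta,\xi)=\widetilde{g}(\eta^{\perp},\xi^{\perp})+\widetilde{g}(\eta^{\top},\xi^{\top})$ and the hypothesis $\eta^{\top}=0$.
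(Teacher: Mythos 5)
Your proposal is correct and follows exactly the route the paper intends: the paper states this lemma without a written proof, remarking only that it "specializes several formulas of the above sections" via Lemma \ref{020323A}, and your substitution of $\nabla u=-\frac{1}{(f\circ\Psi_B)^2}\nabla v$ into (\ref{21112022A}), (\ref{090523A}) and (\ref{15112022C}), together with the normal-bundle computation in item (4), is precisely that specialization. All the sign checks (in particular $\widetilde{g}(\eta,\xi^{\perp})=-1$ and $\widetilde{g}(\xi^{\perp},\xi^{\perp})=-\|\xi^{\top}\|^{2}$) are carried out correctly.
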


\begin{remark}\label{110923B}
 {\rm    In these cases,   the projection $\Psi_{F}\colon M \to F$ is also an immersion.
Indeed, from Lemma \ref{210523D}, the equality $T_{x}\Psi_{F}\cdot v=0$ for $x\in M$ with $v\in T_{x}M$ and $\Psi(M)\subset \mathcal{L}$ give $g(v,v)=0$ and then $v=0$ . The same argument works for $\mathcal{N}$.
For spacelike immersions  $\Psi:M\rightarrow B\times_{\lambda} F$ through these lightlike integral hypersurfaces the induced metric is
$
g=(\lambda\circ \Psi_{B})^{2}\Psi_{F}^{*}(g_{F}).
$
}
\end{remark}

\begin{lemma}\label{16032023A}
Let $\Psi:M\rightarrow B\times_{\lambda} F$ be a spacelike immersion through an integral hypersurface  $\mathcal{L}$ of $D_{\xi}$.   For every  $V\in \mathfrak{X}(M)$  we have
$$
A_{\xi}V=-\Big(\frac{\xi \lambda}{\lambda}\circ \Psi_{B}\Big)V\,\, \textrm{ and }\,\,A_{\eta^{\perp}}V=-\Big(\frac{\eta \lambda}{\lambda}\circ \Psi_{B}\Big)V- \,  \nabla_{V }\nabla v.
$$
 In particular, we get
$
\widetilde{g}(\mathbf{H}, \eta^{\perp})=-\frac{\eta \lambda}{\lambda}\circ \Psi_{B} - \frac{1}{n} \Delta v.
$
\end{lemma}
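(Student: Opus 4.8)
The plan is to specialize the tangential structure equations (\ref{270223B}) and (\ref{210523C}) to the situation at hand, in which $M$ factors through $\mathcal{L}$, and then read off the two shape operators.

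First I would compute $A_{\xi}$. Since $\Psi(M)\subset\mathcal{L}$, the vector field $\xi\mid_{\Psi}$ is normal, so $\xi^{\top}=0$ and $\xi^{\bot}=\xi\mid_{\Psi}$; moreover items (1) and (3) of Lemma \ref{210523D} give $\alpha(V^{B})=0$ and $(V^{F})^{\top}=V$ for every $V\in\mathfrak{X}(M)$. Substituting these data into (\ref{270223B}) leaves $-A_{\xi}V=\big(\frac{\xi\lambda}{\lambda}\circ\Psi_{B}\big)V$, which is the first claimed identity; taking the $g$-trace recovers the already recorded $\widetilde{g}(\mathbf{H},\xi)=-\frac{\xi\lambda}{\lambda}\circ\Psi_{B}$.

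For $A_{\eta^{\bot}}$ the input is again Lemma \ref{210523D}: now $\eta^{\top}=-\nabla v$ by item (2), together with $\alpha(V^{B})=0$ and $(V^{F})^{\top}=V$. Plugging these into the tangential identity (\ref{210523C}), valid for an arbitrary spacelike immersion, yields $\nabla_{V}(-\nabla v)-A_{\eta^{\bot}}V=\big(\frac{\eta\lambda}{\lambda}\circ\Psi_{B}\big)V$, that is $A_{\eta^{\bot}}V=-\big(\frac{\eta\lambda}{\lambda}\circ\Psi_{B}\big)V-\nabla_{V}\nabla v$, as asserted.

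Finally, to obtain $\widetilde{g}(\mathbf{H},\eta^{\bot})$ I would take the $g$-trace of this last identity and use $\mathrm{trace}(A_{\eta^{\bot}})=n\,\widetilde{g}(\mathbf{H},\eta^{\bot})$ together with $\mathrm{trace}_{g}(V\mapsto\nabla_{V}\nabla v)=\Delta v$ (with the sign convention $\Delta=\mathrm{div}\circ\nabla$ fixed in Section \ref{Prel}), getting $n\,\widetilde{g}(\mathbf{H},\eta^{\bot})=-n\big(\frac{\eta\lambda}{\lambda}\circ\Psi_{B}\big)-\Delta v$ and hence the stated formula. There is essentially no obstacle: the lemma is a direct specialization of the structure equations of the previous section once the vanishing and identification facts of Lemma \ref{210523D} are in hand; the only point requiring a little care is the bookkeeping of signs for $\alpha$, for $\eta^{\top}$, and for the Laplacian.
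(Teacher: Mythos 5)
Your argument is correct and is essentially identical to the paper's own proof: both specialize the tangential equations (\ref{270223B}) and (\ref{210523C}) using the facts $\xi^{\top}=0$, $\alpha(V^{B})=0$, $(V^{F})^{\top}=V$ and $\eta^{\top}=-\nabla v$ from Lemma \ref{210523D}, and then take the trace. The sign bookkeeping you carry out explicitly is exactly what the paper leaves implicit.
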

\begin{proof}
Under our assumptions $\xi^{\top}=0$ and by means of Lemma \ref{210523D},  the   equation (\ref{270223B}) reduces to the announced  formula for $A_{\xi}$.
From Lemma \ref{210523D} we have $\eta^{\top}=-\nabla v$.    Hence,  formula (\ref{210523C})  ends the proof.
\end{proof}

In a similar way we have.
\begin{lemma}\label{16032023B}
Let $\Psi:M\rightarrow B\times_{\lambda} F$ be a spacelike immersion  through  an integral hypersurface $\mathcal{N}$ of $D_{\eta}$.  For every  $V\in \mathfrak{X}(M)$  we have
$$
A_{\eta}V=-\Big(\dfrac{\eta\lambda}{\lambda}\circ \Psi_{B}\Big)V
\quad \textrm{ and }\quad A_{\xi^{\perp}}V=-\Big(\frac{\xi \lambda}{\lambda}\circ \Psi_{B}\Big)V + \frac{2}{(f \circ \Psi_{B})^2} \nabla_{V }\nabla v.
$$
In particular, we get
$
\widetilde{g}(\mathbf{H}, \xi^{\bot})=-\frac{\xi \lambda}{\lambda}\circ \Psi_{B}+ \frac{2}{n (f\circ\Psi_{B})^2} \Delta v.
$
\end{lemma}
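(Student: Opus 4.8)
The strategy mirrors exactly the proof of Lemma \ref{16032023A}, now playing the roles of $\xi$ and $\eta$ against each other. First I would record the structural facts from Lemma \ref{080623A} that specialize the general machinery of Section 4 to an immersion through an integral hypersurface $\mathcal{N}$ of $D_{\eta}$: namely $\eta^{\top}=0$, $\alpha(V^{B})=-\left(\frac{2f'}{f}\circ\Psi_{B}\right)g(V,\nabla v)$, $\xi^{\top}=\left(\frac{2}{f^{2}}\circ\Psi_{B}\right)\nabla v$, and $(V^{F})^{\top}=V$. The formula for $A_{\eta}$ then drops out immediately by substituting $\eta^{\top}=0$ into the tangential identity \eqref{210523C}, which collapses to $-A_{\eta}V=\left(\frac{\eta\lambda}{\lambda}\circ\Psi_{B}\right)V$; here one also uses $(V^{F})^{\top}=V$ from Lemma \ref{080623A}(3). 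This gives the first claimed equation, and in particular $A_{\eta}$ is a pure multiple of the identity, so $\eta$ is an umbilical normal direction.

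For the second equation I would apply the analogue of \eqref{270223B} written for $\xi$ (not $\eta$), i.e. the tangential part of the Gauss--Weingarten decomposition of $\widetilde{\nabla}_{V}(\xi\mid_{\Psi})$, which reads $\nabla_{V}\xi^{\top}-A_{\xi^{\bot}}V=\alpha(V^{B})\xi^{\top}+\left(\frac{\xi\lambda}{\lambda}\circ\Psi_{B}\right)(V^{F})^{\top}$. Now substitute $\xi^{\top}=\left(\frac{2}{f^{2}}\circ\Psi_{B}\right)\nabla v$, $(V^{F})^{\top}=V$, and $\alpha(V^{B})=-\left(\frac{2f'}{f}\circ\Psi_{B}\right)g(V,\nabla v)$. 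The term $\nabla_{V}\xi^{\top}$ expands, by the product rule, as $V\!\left(\frac{2}{f^{2}}\circ\Psi_{B}\right)\nabla v+\left(\frac{2}{f^{2}}\circ\Psi_{B}\right)\nabla_{V}\nabla v$; since $f$ is a function of $r$ only, $V\!\left(\frac{2}{f^{2}}\circ\Psi_{B}\right)=\left(\partial_{r}\frac{2}{f^{2}}\circ\Psi_{B}\right)V(v)=-\left(\frac{4f'}{f^{3}}\circ\Psi_{B}\right)g(V,\nabla v)$. The point I expect to be the only place requiring care is that this gradient term must exactly cancel the $\alpha(V^{B})\xi^{\top}=-\left(\frac{4f'}{f^{3}}\circ\Psi_{B}\right)g(V,\nabla v)\nabla v$ contribution on the right-hand side; checking the constants line up (both equal $-\frac{4f'}{f^{3}}g(V,\nabla v)\nabla v$) is the crux of the computation. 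After that cancellation one is left with $-A_{\xi^{\bot}}V=\left(\frac{\xi\lambda}{\lambda}\circ\Psi_{B}\right)V-\left(\frac{2}{f^{2}}\circ\Psi_{B}\right)\nabla_{V}\nabla v$, which rearranges to the stated formula for $A_{\xi^{\bot}}$.

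Finally, the mean curvature statement follows by taking the $g$-trace: using $\mathrm{trace}(A_{\zeta})=n\,\widetilde{g}(\mathbf{H},\zeta)$ and $\mathrm{trace}_{g}(V\mapsto\nabla_{V}\nabla v)=\Delta v$, dividing by $n$ gives $\widetilde{g}(\mathbf{H},\xi^{\bot})=-\frac{\xi\lambda}{\lambda}\circ\Psi_{B}+\frac{2}{n(f\circ\Psi_{B})^{2}}\Delta v$. I would note that one subtlety worth a sentence is the identification $A_{\xi^{\bot}}=A_{\xi}$ when acting on $M$ in codimension two — since $\xi$ and $\xi^{\bot}$ differ by a tangent vector and the shape operator only depends on the normal component, the computation above really computes $A_{\xi^{\bot}}$; but in the general-codimension setting the formula is for the shape operator in the direction $\xi^{\bot}$, and the trace identity applies verbatim. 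No genuinely hard step arises; the proof is a bookkeeping exercise parallel to Lemma \ref{16032023A}, with the role of the nonzero $\alpha(V^{B})$ term being the one new ingredient that must be tracked.
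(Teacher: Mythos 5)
Your proposal is correct and follows exactly the route the paper intends: the paper gives no separate proof of this lemma (it is introduced with ``In a similar way we have,'' deferring to the proof of Lemma \ref{16032023A}), and that proof is precisely your argument — substitute the data of Lemma \ref{080623A} into the tangential identities \eqref{210523C} and \eqref{270223B} and take traces. Your explicit check that the $V\bigl(\tfrac{2}{f^{2}}\circ\Psi_{B}\bigr)\nabla v$ term cancels against $\alpha(V^{B})\xi^{\top}$ (both equal $-\tfrac{4f'}{f^{3}}g(V,\nabla v)\nabla v$) is the one computation the paper leaves implicit, and it is right.
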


In order to avoid ambiguities, we add the following terminology. A function $f$ with values in $\R$ is said to be signed when $f\geq 0$ or $f \leq 0$
on whole domain. 
The assumptions on the functions $\eta \lambda $   and  $\xi\lambda$ in the statements of the following results are only required  along the immersions $\Psi$.

\begin{corollary}\label{020623A}
Assume $\eta \lambda $   $($resp.  $\xi\lambda)$ signed and
let $\Psi:M\rightarrow B\times_{\lambda} F$ be a compact spacelike immersion through an integral hypersurface  $\mathcal{L}$ (resp. $\mathcal{N}$) of the distribution $D_{\xi}$ $($resp. $D_{\eta})$ with $\mathbf{H}=0$. Then $M$ factors through a slice with $u=t_{0}$ and $v=r_{0}$ such  that $\nabla^{B}\lambda (t_{0}, r_{0})=0$  and  $M$ is minimal in $F$. 
\end{corollary}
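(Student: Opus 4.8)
The plan is to carry out the argument for the distribution $D_{\xi}$; the case of $D_{\eta}$ is completely parallel, using Lemma \ref{16032023B} in place of Lemma \ref{16032023A} and Lemma \ref{020323A} in its $D_{\eta}$ form. Since $\Psi$ factors through $\mathcal{L}$ we have $\xi^{\top}=0$ and $\eta^{\top}=-\nabla v$ by Lemma \ref{210523D}, so Lemma \ref{16032023A} reads
\[
\widetilde{g}(\mathbf{H},\eta^{\perp})=-\frac{\eta\lambda}{\lambda}\circ\Psi_{B}-\frac{1}{n}\Delta v .
\]
Imposing $\mathbf{H}=0$ gives $\Delta v=-n\big(\tfrac{\eta\lambda}{\lambda}\circ\Psi_{B}\big)$. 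Because $\lambda>0$ and $\eta\lambda$ is signed along $\Psi$, the function $\Delta v$ has constant sign on $M$.

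Next I would invoke compactness. The divergence theorem yields $\int_{M}\Delta v\,d\mu_{g}=0$, and a function of constant sign with vanishing integral is identically zero, so $\Delta v\equiv 0$ and hence also $\tfrac{\eta\lambda}{\lambda}\circ\Psi_{B}\equiv 0$. A harmonic function on a compact Riemannian manifold is constant, so $v\equiv r_{0}$ for some constant $r_{0}$; in particular $\nabla v=0$. Since $\Psi$ factors through $\mathcal{L}$, Lemma \ref{020323A} gives $\nabla v=(f\circ\Psi_{B})^{2}\nabla u$, and as $(f\circ\Psi_{B})^{2}>0$ this forces $\nabla u=0$, so $u\equiv t_{0}$ is constant. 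Therefore $\Psi$ factors through the slice at level $(t_{0},r_{0})$.

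Finally I would compare the second fundamental form of $M\subset\widetilde{M}$ with that of $M$ inside the slice $S:=\{(t_{0},r_{0})\}\times F$. The slice is totally umbilical in $\widetilde{M}$ with mean curvature vector $-\tfrac{1}{\lambda}\nabla^{B}\lambda(t_{0},r_{0})$ by (\ref{110923A}) (equivalently, by the last formula in (\ref{250223A})), which lies in the $B$-direction, while the second fundamental form of $M$ inside $S$ takes values tangent to $S$, i.e. in the $F$-direction. Writing $\mathrm{II}^{M\subset\widetilde{M}}=\mathrm{II}^{M\subset S}+\mathrm{II}^{S}|_{TM}$ and taking $g$-traces, using $\mathrm{trace}_{g}\widetilde{g}=n$, gives the orthogonal decomposition
\[
\mathbf{H}=\mathbf{H}^{M\subset S}-\frac{1}{\lambda}\nabla^{B}\lambda(t_{0},r_{0}).
\]
Since $\mathbf{H}=0$ and the two summands are orthogonal, both vanish: $\nabla^{B}\lambda(t_{0},r_{0})=0$ and $\mathbf{H}^{M\subset S}=0$. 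As the induced metric on $S$ is the constant rescaling $\lambda(t_{0},r_{0})^{2}g_{F}$ of $g_{F}$ and minimality is unaffected by a constant conformal change of the ambient metric, the immersion $\Psi_{F}:M\to(F,g_{F})$ is minimal.

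The only genuinely delicate point is the sign bookkeeping in the first step: one must check that the hypothesis ``$\eta\lambda$ (resp. $\xi\lambda$) signed'', combined with Lemma \ref{16032023A} (resp. Lemma \ref{16032023B}), $\lambda>0$ and $(f\circ\Psi_{B})^{2}>0$, forces $\Delta v$ to have a definite sign, so that compactness closes the argument; the remaining steps are a routine assembly of the structural lemmas already established.
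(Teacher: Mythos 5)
Your proposal is correct and follows essentially the same route as the paper: Lemma \ref{16032023A} with $\mathbf{H}=0$ forces $\Delta v$ to be signed, compactness then gives $v$ constant, Lemma \ref{020323A} gives $u$ constant, and the composition-of-immersions formula for the mean curvature through the slice yields $\mathbf{H}'=0$ and $\nabla^{B}\lambda(t_{0},r_{0})=0$. You merely make explicit two points the paper leaves implicit (the divergence-theorem step and the orthogonality of the two components of $\mathbf{H}$), which is fine.
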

\begin{proof} We give the proof only for the case of $D_{\xi}.$
From Lemma \ref{16032023A}, we have 
$$
\frac{\eta \lambda}{\lambda}\circ \Psi_{B} + \frac{1}{n} \Delta v=0.
$$
The assumption the sign of $\eta \lambda$  implies that $\Delta v$ is also signed.  
The compactness of $M$ gives that $v$ is a constant function $r_{0}$ and Lemma \ref{020323A} implies that $u$ is also  a constant function $t_{0}$. 
Now, consider the string of smooth maps
$$
M\stackrel{\Psi_{F}}{\longrightarrow }  F\hookrightarrow  B\times_{\lambda} F,
$$
where the second map is the slice at level $(t_{0}, r_{0}).$
From \cite[Chap. 3]{Chen}, we know that for an orthonormal local frame $(E_{1}, \cdots , E_{n})$ of the induced metric $g$, we have
$$
\mathbf{H}= \mathbf{H}'+\frac{1}{n}\sum_{i=1}^{n} \overline{\mathrm{II}}(T\Psi_F\cdot E_{i},T\Psi_F\cdot E_{i}), 
$$
 where $\mathbf{H}'$ denotes the mean curvature vector field of $\Psi_{F}$ and $\overline{\mathrm{II}}$ is the second fundamental form of the slice at $(t_{0}, r_{0}).$
Therefore, as a consequence of (\ref{110923A}) and Remark \ref{110923B}, we get that $$
\mathbf{H}= \mathbf{H}'-\dfrac{\nabla^{B} \lambda}{\lambda}(t_{0}, r_{0}).
$$
Hence 
$\mathbf{H}=0$ provides that $\mathbf{H}'=0$ and  $\nabla^{B} \lambda(t_{0}, r_{0})=0.$ 
\end{proof}



As a direct consequence of  Lemmas \ref{16032023A} and  \ref{16032023B}, for a spacelike submanifold through an integral hypersurface of $D_{\xi}$ (resp. $D_{\eta})$, the  normal vector field $\eta^{\perp}$ (resp. $\xi^{\perp}$) is  umbilic if and only if there is $h\in\mathcal{C}^{\infty}(M)$ such that
\begin{equation}\label{22032023A}
\nabla_V\nabla v=hV,
\end{equation}
for every $V\in\mathfrak{X}(M)$.  
Then, in both situations, we have $\mathcal{L}_{\nabla v}g=2hg$ and therefore $\nabla v$ is a conformal vector field in $(M,g)$.

\begin{theorem}\label{20032023A}
Let $\Psi:M\rightarrow B\times_{\lambda} F$ be a compact spacelike immersion  through an integral hypersurface $\mathcal{L}$ (resp.  $\mathcal{N}$) of $D_{\xi}$ $($resp. $D_{\eta})$ with $\mathrm{Ric}^g(\nabla v,\nabla v)\leq 0$. Then $\eta^{\perp}$ $($resp. $\xi^{\perp})$ is an umbilic direction  if and only if $M$ factors through a slice.
\end{theorem}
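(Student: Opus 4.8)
The plan is to prove the two implications separately; the forward direction (factors through a slice $\Rightarrow$ $\eta^{\perp}$ umbilic) is immediate, and the content of the theorem lies in a Bochner-type argument for the converse. I will carry out the case of $D_{\xi}$ in detail; the $D_{\eta}$ case is entirely parallel, replacing Lemma \ref{16032023A} by Lemma \ref{16032023B} and using the $D_{\eta}$ version of Lemma \ref{020323A}. For the easy direction: if $M$ factors through a slice, then $u$ and $v$ are constant, so $\nabla v=0$ and hence $\nabla_{V}\nabla v=0$ for every $V\in\mathfrak{X}(M)$; the expression for $A_{\eta^{\perp}}$ in Lemma \ref{16032023A} then collapses to $A_{\eta^{\perp}}=-\big(\tfrac{\eta\lambda}{\lambda}\circ\Psi_{B}\big)\mathrm{Id}$, a function times the identity, so $\eta^{\perp}$ is an umbilic direction.

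For the converse, assume $\eta^{\perp}$ is umbilic. By the observation immediately preceding the statement this is precisely condition (\ref{22032023A}): there is $h\in C^{\infty}(M)$ with $\nabla_{V}\nabla v=hV$ for all $V$, i.e. $\mathrm{Hess}^{g}v=h\,g$. Tracing gives $\Delta v=nh$ and $|\mathrm{Hess}^{g}v|^{2}=nh^{2}$. I would then integrate the Bochner formula over the compact manifold $M$: since $\int_{M}\Delta|\nabla v|^{2}\,d\mu_{g}=0$ and $\int_{M}g(\nabla v,\nabla\Delta v)\,d\mu_{g}=-\int_{M}(\Delta v)^{2}\,d\mu_{g}$ by Green's identity, one obtains
$$
\int_{M}(\Delta v)^{2}\,d\mu_{g}=\int_{M}|\mathrm{Hess}^{g}v|^{2}\,d\mu_{g}+\int_{M}\mathrm{Ric}^{g}(\nabla v,\nabla v)\,d\mu_{g}.
$$
Substituting the two pointwise identities yields $n(n-1)\int_{M}h^{2}\,d\mu_{g}=\int_{M}\mathrm{Ric}^{g}(\nabla v,\nabla v)\,d\mu_{g}\le 0$. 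Since $n\ge 2$ the coefficient $n(n-1)$ is positive, forcing $h\equiv 0$; hence $\mathrm{Hess}^{g}v=0$, and on the compact connected manifold $M$ this makes $v$ constant (for instance, $\Delta v=0$ gives $\int_{M}|\nabla v|^{2}\,d\mu_{g}=0$). Finally Lemma \ref{020323A}, in the form $\nabla v=(f\circ\Psi_{B})^{2}\nabla u$, forces $\nabla u=0$ as well, so $\Psi_{B}$ is constant and $M$ factors through a slice.

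I do not expect a genuine obstacle: the only points requiring care are the sign bookkeeping in the integrated Bochner identity and the role of the standing hypothesis $n\ge 2$, which is exactly what makes $n(n-1)>0$ and lets the sign of the Ricci term be decisive. For $D_{\eta}$, Lemma \ref{16032023B} shows that umbilicity of $\xi^{\perp}$ again makes $\nabla_{V}\nabla v$ pointwise proportional to $V$, hence $\mathrm{Hess}^{g}v=h\,g$, and the same Bochner computation together with the $D_{\eta}$ form of Lemma \ref{020323A} ($\nabla v=-(f\circ\Psi_{B})^{2}\nabla u$) completes the argument.
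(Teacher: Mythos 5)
Your proof is correct. Both directions check out: the easy implication follows from Lemma \ref{16032023A} exactly as you say, and your Bochner computation is sound — with $\mathrm{Hess}^{g}v=h\,g$ one has $(\Delta v)^{2}=n^{2}h^{2}$ and $|\mathrm{Hess}^{g}v|^{2}=nh^{2}$, the integrated Bochner identity gives $n(n-1)\int_{M}h^{2}\,d\mu_{g}=\int_{M}\mathrm{Ric}^{g}(\nabla v,\nabla v)\,d\mu_{g}\le 0$, and the standing hypothesis $n\ge 2$ makes the coefficient positive, so $h\equiv 0$, $v$ is constant by compactness, and Lemma \ref{020323A} kills $\nabla u$. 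The paper reaches the same conclusion by a shorter route: it observes (as you do) that umbilicity of $\eta^{\perp}$ is exactly condition (\ref{22032023A}), so $\nabla v$ is a conformal vector field, and then invokes the classical theorem (cited from Poor, Chap.~5) that a conformal vector field $X$ on a compact Riemannian manifold with $\mathrm{Ric}^{g}(X,X)\le 0$ is Killing; being both conformal with factor $2h$ and Killing forces $h=0$, whence $\Delta v=0$ and $v$ is constant. The underlying mechanism is the same Bochner-type integral argument — your version simply proves the needed special case (conformal \emph{gradient} field) from scratch instead of citing the general statement, which makes the proof self-contained and in fact yields the slightly stronger pointwise conclusion $\mathrm{Hess}^{g}v=0$ directly; the paper's version is shorter and applies verbatim to both the $D_{\xi}$ and $D_{\eta}$ cases since it never needs the explicit form of $A_{\eta^{\perp}}$ beyond (\ref{22032023A}).
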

\begin{proof}
Since $\nabla v$ is a conformal vector field and $\textrm{Ric}^g(\nabla v,\nabla v)\leq 0$ it follows that $\nabla v$ is a Killing vector field (see \cite[Chap. 5]{Poor}). This necessarily implies that $h=0$ in (\ref{22032023A}) and therefore $\Delta v=0$. Taking into account that $M$ is assumed to be compact, we get that $v$ is a constant function. Furthermore, from Lemma \ref{020323A} the function $u$ must also be a constant. The converse is obvious.

\end{proof}

Equation (\ref{22032023A})  implies that $\nabla v$ is a conformal gradient vector field on the Riemannian manifold $(M,g)$. From a classic result by Obata \cite{Ob}, the existence of such vector fields on compact Riemannian manifolds has been address in \cite[Theor. 1]{DAL}. As a direct consequence of this result, we have.

\begin{theorem}\label{260723A}
Let $\Psi:M\rightarrow B\times_{\lambda} F$ be a  compact spacelike immersion   through an integral hypersurface $\mathcal{L}$ (resp.  $\mathcal{N}$) of $D_{\xi}$ $($resp. $D_{\eta})$ with  $\eta^{\perp}$ $($resp. $\xi^{\perp})$  an umbilical direction.
Assume the Ricci tensor of the induced metric $g$ satisfies
$$
0< \mathrm{Ric}^{g}\leq (n-1)\left( 2- \frac{nc}{\lambda_{1}}\right)c,
$$
for a constant $c$ where $\lambda_{1}$ is the first non-trivial eigenvalue of the Laplace operator of the metric $g$.  If $\nabla v$ is a nonzero vector field, then $(M,g)$ is isometric to the sphere $\S^{n}(c)$ of constant sectional curvature $c$.

\end{theorem}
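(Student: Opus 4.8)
The plan is to reduce the statement to the classification result \cite[Theor. 1]{DAL} for compact Riemannian manifolds carrying a non-trivial conformal gradient vector field. First I would recall the equivalence established just before the statement: since $\eta^{\perp}$ (resp. $\xi^{\perp}$) is an umbilical direction, Lemma \ref{16032023A} (resp. Lemma \ref{16032023B}) together with (\ref{22032023A}) produces a function $h\in C^{\infty}(M)$ with
$$
\nabla_{V}\nabla v=hV\qquad\text{for every }V\in\mathfrak{X}(M),
$$
that is, $\mathrm{Hess}\,v=h\,g$. Hence $\nabla v$ is a closed (gradient) conformal vector field on $(M,g)$, the induced metric being Riemannian because the immersion is spacelike.

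Next I would observe that the hypothesis $\nabla v\neq 0$ forces $v$ to be non-constant. In particular $h$ cannot vanish identically: otherwise $\Delta v=nh=0$ on the compact manifold $M$ would force $v$ to be constant, contradicting $\nabla v\neq 0$. Thus $v$ is a genuine non-constant solution of $\mathrm{Hess}\,v=h\,g$ on a compact Riemannian manifold, which is exactly the setting of \cite[Theor. 1]{DAL}. Applying that theorem with the pinching $0<\mathrm{Ric}^{g}\leq (n-1)\bigl(2-\tfrac{nc}{\lambda_{1}}\bigr)c$, where $\lambda_{1}$ is the first non-zero eigenvalue of the Laplace operator of $g$, yields that $(M,g)$ is isometric to the sphere $\S^{n}(c)$. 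Since compactness, $\nabla v\neq 0$ and the Ricci bounds are all part of our assumptions, this completes the argument; the converse-type remarks (the case $\nabla v=0$, i.e. the slice) are excluded by hypothesis and need not be addressed.

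The only delicate point — more a matter of bookkeeping than a genuine obstacle — is to make sure the normalisations agree with those in \cite{DAL}: that the constant $c$ in our Ricci bound is precisely the constant $c$ appearing in the conclusion there, and that ``conformal gradient vector field'' is used in the same sense as the condition $\mathrm{Hess}\,v=h\,g$ obtained here (in particular, that no sign or extra non-triviality hypothesis on $h$ beyond $v$ being non-constant is required, which we have just verified). Once these conventions are matched, the proof is a direct citation of \cite[Theor. 1]{DAL}; no further computation is needed. One may, if desired, also note at this stage that on a compact manifold with $\mathrm{Ric}^{g}>0$ there are no non-zero parallel vector fields, which gives an alternative way to see that $h\not\equiv 0$.
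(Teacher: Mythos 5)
Your proposal is correct and follows essentially the same route as the paper: the umbilicity of $\eta^{\perp}$ (resp. $\xi^{\perp}$) yields $\nabla_V\nabla v=hV$ via (\ref{22032023A}), so $\nabla v$ is a conformal gradient vector field, and the conclusion is then a direct application of \cite[Theor. 1]{DAL} under the stated Ricci pinching. The additional remarks on $h\not\equiv 0$ and on matching normalisations with \cite{DAL} are harmless bookkeeping that the paper leaves implicit.
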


\begin{remark}
{\rm A careful reading of \cite[Theor. 1]{DAL} shows that we only need the above condition on the Ricci tensor for the vector field $\nabla (\frac{\Delta v}{n}+c v)$.

}
\end{remark}

\begin{remark}
{\rm From \cite[Chap. 1]{Y}, we know that for every conformal vector field $V$ in an $n$-dimensional Riemannian manifold, the following formula holds
$$
V(S^g)=-\frac{2(n-1)}{n}\Delta(\mathrm{div} V)- \frac{2}{n}\mathrm{div}V\cdot S^g,
$$
where $S^g$ is the scalar curvature of $g$. Under the assumptions of Theorem \ref{260723A}, the vector field $\nabla v$
is  conformal and the manifold $(M,g)$ is isometric to the sphere $\S^{n}(c)$. Therefore, the above formula reduces  to
$$
0=\Delta(\Delta v)+n c\Delta v.
$$
This implies that $\Delta v+ nc\, v=k \in \R$. Hence for $w:=v-\frac{k}{nc}$, we have that $\Delta w+ nc w=0$ and
 either $v=\frac{k}{nc}$ or $w$ is an eigenfunction of the Laplace operator of the sphere $\S^n(c)$ corresponding with $n c$. This is the well-known value of the first non-trivial eigenvalue $\lambda_1$ of the Laplace operator of the sphere $\S^n(c)$, \cite[Chap. 2]{Cha}. The space of homogeneous  harmonic polynomial of $\R^{n+1}$  of degree $1$ restricted to $\S^{n}(c)$  constitutes the eigenspace corresponding to $\lambda_{1}$. In other words, there is $a\in \R^{n+1}$ such that
$
v(x)=\langle x, a\rangle+ \frac{k}{nc}, 
$
where $\langle \,,\,\rangle$ is the usual Euclidean inner product.
}
\end{remark}

\begin{proposition}\label{28032023A}
Let $\Psi:M\rightarrow B\times_{\lambda} F$ be a spacelike immersion  through an integral hypersurface $\mathcal{L}$  of $D_{\xi}$.  For every $V\in\mathfrak{X}(M)$  we have 
$$
\nabla_V^{\perp}\xi=-\left(\dfrac{\xi\lambda}{\lambda}\circ\Psi_B\right)g(\nabla v,V)\xi\,\, \textrm{ and }\,\,
\nabla_V^{\perp}\eta^{\perp}=-\left(\dfrac{\eta\lambda}{\lambda}\circ\Psi_B\right)g(\nabla v,V)\xi+\mathrm{II}(\nabla v,V).
$$
\end{proposition}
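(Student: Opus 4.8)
The plan is to compute $\widetilde{\nabla}_V(\xi\mid_\Psi)$ and $\widetilde{\nabla}_V(\eta^\perp)$ directly and read off the normal parts, exactly as was done in Section 4 for the general immersion, but now exploiting the simplifications provided by Lemma \ref{210523D}. First I would recall equation (\ref{17112022B}), which under the hypothesis $\xi^\top=0$ (the defining feature of factoring through $\mathcal{L}$) and using Lemma \ref{210523D}(1) ($\alpha(V^B)=0$) reduces to
$$
\mathrm{II}(V,0)-A_{\xi}V+\nabla_V^\perp\xi=\Big(\dfrac{\xi\lambda}{\lambda}\circ\Psi_B\Big)V^F.
$$
By Lemma \ref{210523D}(3) we have $(V^F)^\top=V$ and $(V^F)^\perp=(V^B)^\perp$; and since $V^B=g(V,\nabla v)\xi$ is tangent, in fact $V^F=V-g(V,\nabla v)\xi$, so $(V^F)^\perp=-g(V,\nabla v)\,\xi^\perp$. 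But under the factoring hypothesis $\xi^\perp=\xi\mid_\Psi$ because $\xi^\top=0$. Hence the normal part of the right-hand side is $-\big(\tfrac{\xi\lambda}{\lambda}\circ\Psi_B\big)g(\nabla v,V)\,\xi$, and since $A_\xi V$ is tangent, taking normal parts of the displayed equation gives the first formula $\nabla_V^\perp\xi=-\big(\tfrac{\xi\lambda}{\lambda}\circ\Psi_B\big)g(\nabla v,V)\,\xi$.

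For the second formula I would start from the analogous equation (\ref{240523A}) for $\eta$, which requires a little more care because $\eta^\top=-\nabla v$ is not zero (Lemma \ref{210523D}(2)). Using $\alpha(V^B)=0$ again, (\ref{240523A}) reads
$$
\nabla_V\eta^\top+\mathrm{II}(V,\eta^\top)-A_{\eta^\perp}V+\nabla_V^\perp\eta^\perp=\Big(\dfrac{\eta\lambda}{\lambda}\circ\Psi_B\Big)V^F.
$$
Taking normal parts: $\nabla_V\eta^\top$ is tangent, $A_{\eta^\perp}V$ is tangent, while $\mathrm{II}(V,\eta^\top)=\mathrm{II}(V,-\nabla v)=-\mathrm{II}(V,\nabla v)$ is normal, and the normal part of $\big(\tfrac{\eta\lambda}{\lambda}\circ\Psi_B\big)V^F$ is $-\big(\tfrac{\eta\lambda}{\lambda}\circ\Psi_B\big)g(\nabla v,V)\,\xi$ by the same computation of $(V^F)^\perp$ as above. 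Rearranging yields
$$
\nabla_V^\perp\eta^\perp=-\Big(\dfrac{\eta\lambda}{\lambda}\circ\Psi_B\Big)g(\nabla v,V)\,\xi+\mathrm{II}(\nabla v,V),
$$
which is the claimed identity.

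The only genuinely delicate point is the bookkeeping of the normal part of $V^F$: one must remember that although $(V^F)^\top=V$, the vector $V^F$ is generally \emph{not} tangent, and its normal component is precisely $-g(V,\nabla v)\xi$, coming from $V^B=g(V,\nabla v)\xi$ via Lemma \ref{210523D}(3) together with $\xi^\perp=\xi\mid_\Psi$. Everything else is a routine decomposition into tangent and normal parts of the two Gauss--Weingarten identities (\ref{17112022B}) and (\ref{240523A}) already established, specialized by Lemma \ref{210523D}; no new geometric input is needed.
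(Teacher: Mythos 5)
Your argument is correct and follows essentially the same route as the paper: specialize equations (\ref{17112022B}) and (\ref{240523A}) using Lemma \ref{210523D}, then read off normal parts via $V^{F}-V=-V^{B}=-g(V,\nabla v)\xi$. The only blemish is the intermediate line asserting $(V^F)^\perp=(V^B)^\perp$ with $V^B$ ``tangent'' --- in fact $(V^F)^\perp=-(V^B)^\perp$ and $V^B=g(V,\nabla v)\xi$ is normal because $\xi^\top=0$ --- but the normal component $-g(V,\nabla v)\,\xi$ that you actually substitute is the correct one, so the proof stands.
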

\begin{proof} The first assertion is a direct computation from equation (\ref{17112022B}) from Lemmas \ref{210523D} and \ref{16032023A} taking into account  that $V^{F}-V=-V^{B}$.
On the other hand,   Lemmas \ref{210523D} and \ref{16032023A} reduce equation (\ref{240523A})  to
$$
-\mathrm{II}(V,\nabla v)+\Big( \frac{\eta \lambda}{\lambda}\circ \Psi_{B}\Big) V+\nabla^{\perp}_V\,\eta^{\bot}=\Big(\dfrac{\eta\lambda}{\lambda}\circ \Psi_{B}\Big) V^F.
$$
and  again from $V^{F}-V=-V^{B}$, the above formula ends the proof.

\end{proof}

In a similar way we obtain the following lemma.
\begin{proposition}\label{28032023B}
Let $\Psi:M\rightarrow B\times_{\lambda} F$ be a spacelike immersion   through an integral hypersurface  $\mathcal{N}$ of $D_{\eta}$.  For every $V\in\mathfrak{X}(M)$  we have
$$
\nabla_V^{\perp}\eta=\dfrac{2}{(f\circ\Psi_B)^2}\left(ff'\circ\Psi_B+\dfrac{\eta\lambda}{\lambda}\circ\Psi_B\right)g(\nabla v,V)\eta
$$
and
$$
\nabla_V^{\perp}\xi^{\perp}=\dfrac{2}{(f\circ\Psi_B)^2}\left[g(\nabla v,V)\left(-\left(ff'\circ\Psi_B\right)\xi^{\perp}+\left(\dfrac{\xi\lambda}{\lambda}\circ\Psi_B\right)\eta\right)-\mathrm{II}(\nabla v,V)\right].
$$
\end{proposition}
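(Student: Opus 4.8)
The plan is to mirror the proof of Proposition~\ref{28032023A}, interchanging the roles of $\xi$ and $\eta$ and invoking Lemma~\ref{080623A} and Lemma~\ref{16032023B} in place of Lemma~\ref{210523D} and Lemma~\ref{16032023A}. For the first formula I would start from equation~(\ref{240523A}). Since $M$ factors through an integral hypersurface $\mathcal{N}$ of $D_{\eta}$ we have $\eta^{\top}=0$, hence $\eta\mid_{\Psi}=\eta^{\bot}$ and (\ref{240523A}) collapses to $-A_{\eta^{\bot}}V+\nabla^{\perp}_V\eta^{\bot}=-\alpha(V^{B})\,\eta\mid_{\Psi}+\big(\tfrac{\eta\lambda}{\lambda}\circ\Psi_B\big)V^F$. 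Taking normal parts, substituting $A_{\eta}V=-\big(\tfrac{\eta\lambda}{\lambda}\circ\Psi_B\big)V$ from Lemma~\ref{16032023B}, and using $(V^{F})^{\bot}=V^{F}-V=-V^{B}$ together with $V^{B}=-\big(\tfrac{2}{f^2}\circ\Psi_B\big)g(V,\nabla v)\eta$ and $\alpha(V^{B})=-\big(\tfrac{2f'}{f}\circ\Psi_B\big)g(V,\nabla v)$ from Lemma~\ref{080623A}, the right-hand side becomes a scalar multiple of $\eta$; rewriting $\tfrac{2f'}{f}=\tfrac{2ff'}{f^2}$ and collecting terms then gives exactly the stated expression for $\nabla^{\perp}_V\eta$.

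For the second formula I would instead decompose $\widetilde{\nabla}_V(\xi\mid_{\Psi})$ via equation~(\ref{17112022B}). The essential difference with the $D_{\xi}$ situation of Proposition~\ref{28032023A} is that here $\xi^{\top}$ does \emph{not} vanish: by Lemma~\ref{080623A} one has $\xi^{\top}=\big(\tfrac{2}{f^2}\circ\Psi_B\big)\nabla v$, so the second fundamental form term $\mathrm{II}(V,\xi^{\top})=\big(\tfrac{2}{f^2}\circ\Psi_B\big)\mathrm{II}(V,\nabla v)$ contributes to the normal part of~(\ref{17112022B}). That normal component reads $\mathrm{II}(V,\xi^{\top})+\nabla^{\perp}_V\xi^{\bot}=\alpha(V^{B})\,\xi^{\bot}+\big(\tfrac{\xi\lambda}{\lambda}\circ\Psi_B\big)(V^{F})^{\bot}$. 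Inserting $\alpha(V^{B})$, $(V^{F})^{\bot}=-V^{B}$ and $\xi^{\top}$ as above, factoring out the common $\tfrac{2}{(f\circ\Psi_B)^2}$, and using the symmetry $\mathrm{II}(V,\nabla v)=\mathrm{II}(\nabla v,V)$ yields the claimed formula for $\nabla^{\perp}_V\xi^{\bot}$.

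The algebra of assembling the scalar coefficients $ff'$, $\tfrac{\xi\lambda}{\lambda}$ and $\tfrac{\eta\lambda}{\lambda}$ is routine. The only point that requires care — and the place where the $D_{\eta}$ computation genuinely departs from the $D_{\xi}$ one — is bookkeeping the two terms that were absent before, namely the now nonvanishing recurrence term $\alpha(V^{B})$ and the extrinsic term $\mathrm{II}(V,\xi^{\top})$ arising from $\xi^{\top}\neq 0$. Once these are carried through correctly, no further obstacle remains.
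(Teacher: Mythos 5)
Your proposal is correct and follows exactly the route the paper intends: the paper only states that Proposition \ref{28032023B} is obtained ``in a similar way'' to Proposition \ref{28032023A}, and your argument is the faithful $D_{\eta}$ analogue, taking normal parts of (\ref{240523A}) and (\ref{17112022B}) and substituting the data of Lemma \ref{080623A}. The two coefficients you single out ($\alpha(V^{B})\neq 0$ and $\xi^{\top}\neq 0$) are indeed the only points where the computation departs from the $D_{\xi}$ case, and your bookkeeping of them reproduces the stated formulas.
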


As a direct consequence of Propositions  \ref{28032023A} and \ref{28032023B} we have.

\begin{theorem}\label{250523A}
Let $\Psi:M\rightarrow B\times_{\lambda} F$ be a spacelike immersion  through an integral hypersurface $\mathcal{L}$ (resp.  $\mathcal{N}$) of $D_{\xi}$  (resp. $D_{\eta}$).  Assume $\xi\lambda\neq 0$ $($resp. $ff'+\frac{\eta\lambda}{\lambda}\neq 0$), then the following assertions are equivalent
\begin{enumerate} 
\item $\nabla_V^{\perp}\xi=0$ $($resp. $\nabla_V^{\perp}\eta=0)$ for every $V\in\mathfrak{X}(M)$. 
\item  $M$ factors through a slice.
\end{enumerate}
\end{theorem}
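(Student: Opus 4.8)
The strategy is to read off both implications directly from Propositions~\ref{28032023A} and \ref{28032023B}, using the extra non-vanishing hypothesis to cancel the $\xi$-component (resp. $\eta$-component) in the normal-connection formula. I will treat the case of $D_{\xi}$ in detail; the case of $D_{\eta}$ is identical after replacing the formula for $\nabla^{\perp}_V\eta$ from Proposition~\ref{28032023B}.

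\textbf{Step 1: $(2)\Rightarrow(1)$.} If $M$ factors through a slice, then $u$ and $v$ are constant on $M$, so $\nabla v=0$. Substituting $\nabla v=0$ into the first formula of Proposition~\ref{28032023A},
$$
\nabla^{\perp}_V\xi=-\Big(\dfrac{\xi\lambda}{\lambda}\circ\Psi_B\Big)\,g(\nabla v,V)\,\xi=0
$$
for every $V\in\mathfrak{X}(M)$, which is assertion (1). (No hypothesis on $\xi\lambda$ is needed for this direction.)

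\textbf{Step 2: $(1)\Rightarrow(2)$.} Assume $\nabla^{\perp}_V\xi=0$ for all $V$. By Proposition~\ref{28032023A} this means
$$
\Big(\dfrac{\xi\lambda}{\lambda}\circ\Psi_B\Big)\,g(\nabla v,V)\,\xi=0\qquad\text{for all }V\in\mathfrak{X}(M).
$$
Since $\xi|_\Psi$ is a nowhere-zero (lightlike) normal vector field and $\lambda>0$, and since by hypothesis $\xi\lambda$ never vanishes along $\Psi$, the scalar factor $\tfrac{\xi\lambda}{\lambda}\circ\Psi_B$ is nowhere zero; dividing it out gives $g(\nabla v,V)=0$ for every $V$, hence $\nabla v=0$ on $M$. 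Because $M$ is connected, $v$ is a constant $r_0$. Finally, $M$ factors through an integral hypersurface of $D_{\xi}$, so Lemma~\ref{020323A} gives $\nabla v=(f\circ\Psi_B)^2\nabla u$; therefore $\nabla u=0$ as well and $u$ is a constant $t_0$. Thus $\Psi$ factors through the slice at level $(t_0,r_0)$, which is assertion (2). The argument for $D_{\eta}$ is verbatim the same, using instead the formula $\nabla^{\perp}_V\eta=\tfrac{2}{(f\circ\Psi_B)^2}\big(ff'\circ\Psi_B+\tfrac{\eta\lambda}{\lambda}\circ\Psi_B\big)g(\nabla v,V)\eta$ from Proposition~\ref{28032023B} and the hypothesis $ff'+\tfrac{\eta\lambda}{\lambda}\neq 0$ to conclude $\nabla v=0$.

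\textbf{Main obstacle.} There is essentially no analytic obstacle here: the theorem is a corollary of the two Propositions, and the only point requiring care is the bookkeeping of which scalar factor must be assumed non-vanishing in each of the two cases — the $\xi$-case needs $\xi\lambda\neq 0$, while the $\eta$-case needs the combined quantity $ff'+\tfrac{\eta\lambda}{\lambda}\neq 0$ rather than $\eta\lambda\neq 0$ alone, because of the extra $ff'$ term coming from $\nabla^B\eta=-\alpha\otimes\eta$ together with the expression (\ref{15112022C}) for $\alpha$. One should also note explicitly that, although the $\eta^{\perp}$-component of $\nabla^{\perp}_V\eta^{\perp}$ in Proposition~\ref{28032023A} contains a $\mathrm{II}(\nabla v,V)$ term, the vector field one is differentiating in the theorem is $\xi$ (resp.\ $\eta$), whose normal-connection formula has no such second-fundamental-form term, so no control of $\mathrm{II}$ is required.
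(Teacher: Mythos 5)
Your proof is correct and follows exactly the route the paper intends: the authors state Theorem \ref{250523A} as a direct consequence of Propositions \ref{28032023A} and \ref{28032023B}, and your argument simply makes explicit the cancellation of the scalar factor (using $\xi\lambda\neq 0$, resp. $ff'+\frac{\eta\lambda}{\lambda}\neq 0$) to get $\nabla v=0$, followed by Lemma \ref{020323A} to conclude $u$ is constant. Nothing to add.
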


\begin{remark}
\rm{ In order to study the applicability of this result to the case $B\times_{r}\S^{m}$ where $f^{2}(r)$ is given in (\ref{app}),  recall $\xi \lambda=1$ and a direct computation shows that $ff'+\frac{\eta \lambda}{\lambda}\neq 0$ if and only if 
$$
-2\Lambda r^{m+1}+m(m+1)(2m\mathbf{M}-r^{m-1}-(2m-1)q^2r^{1-m})\neq 0.
$$
For the exterior Schwarzschild spacetime with mass $\mathbf{M}$, we have  
$$
ff'+\frac{\eta\lambda}{\lambda}=\frac{2m\mathbf{M}-r^{m-1}}{2r^m}.
$$
Therefore, the assumption in Theorem \ref{250523A} is always satisfied for the distribution $D_{\xi}$ but the condition for $D_{\eta}$ holds if and only if the value 
$ 2m\mathbf{M}$ is not achieved for the function $v^{m-1}\in C^{\infty}(M)$.
}
\end{remark}

\section{Codimension two spacelike immersions }

From now on, we assume $n=m$, that is, the spacelike immersion $\Psi:M\rightarrow B\times_{\lambda} F$ has codimension two. 
We begin this section with a topological result on such spacelike immersions.

\begin{proposition}\label{020623B}
Let $\Psi:M\rightarrow B\times_{\lambda} F$ be a codimension two compact spacelike  immersion  through an integral hypersurface $\mathcal{L}$ (resp.  $\mathcal{N}$)  of $D_{\xi}$  $($resp. $D_{\eta})$. 
Then the map $\Psi_{F}\colon M \to F$ is a covering map. In particular,  $F$ is also compact and when $F$ is  simply-connected,  $\Psi_{F}$ is a diffeomorphism. 
\end{proposition}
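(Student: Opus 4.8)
The plan is to reduce the statement to the standard fact that a local diffeomorphism from a compact manifold onto a connected manifold is a covering map. First I would invoke Remark \ref{110923B}: since $M$ factors through the lightlike integral hypersurface $\mathcal{L}$ (resp. $\mathcal{N}$), the projection $\Psi_F\colon M\to F$ is an immersion. As $\dim M=n=m=\dim F$, at each $x\in M$ the differential $T_x\Psi_F$ is an injective linear map between vector spaces of the same dimension, hence an isomorphism, so $\Psi_F$ is a local diffeomorphism; in particular it is an open map.

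Next I would establish surjectivity and finiteness of fibres. Because $M$ is compact, $\Psi_F(M)$ is compact, hence closed in $F$; because $\Psi_F$ is open, $\Psi_F(M)$ is also open; and $F$ is connected, so $\Psi_F(M)=F$. This already yields that $F$ is compact. For $y\in F$, the fibre $\Psi_F^{-1}(y)$ is a closed subset of the compact manifold $M$, and it is discrete since $\Psi_F$ is a local diffeomorphism, hence finite, say $\Psi_F^{-1}(y)=\{x_1,\dots,x_k\}$. Then I would carry out the usual evenly-covered-neighbourhood construction: pick pairwise disjoint open sets $U_i\ni x_i$ on which $\Psi_F$ restricts to a diffeomorphism onto an open subset of $F$, note that $K:=M\setminus\bigcup_{i=1}^{k}U_i$ is compact so $\Psi_F(K)$ is closed and misses $y$, and set $V:=\bigl(F\setminus\Psi_F(K)\bigr)\cap\bigcap_{i=1}^{k}\Psi_F(U_i)$. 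Replacing $U_i$ by $U_i':=U_i\cap(\Psi_F|_{U_i})^{-1}(V)$, one checks $\Psi_F^{-1}(V)=\bigsqcup_{i=1}^{k}U_i'$ with each $\Psi_F|_{U_i'}\colon U_i'\to V$ a diffeomorphism, so $V$ is evenly covered. Hence $\Psi_F$ is a smooth covering map. Finally, if $F$ is simply connected, every covering of $F$ is trivial, i.e. a disjoint union of copies of $F$; since $M$ is connected there is a single sheet, so $\Psi_F$ is a diffeomorphism.

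The argument is essentially formal once the dimension count is in hand; the only point that needs a little care is the passage from finiteness of fibres to the existence of evenly covered neighbourhoods, which is precisely where the compactness of $M$ (equivalently, properness of $\Psi_F$) enters. If one prefers, this step can instead be quoted directly from the standard fact that a proper local diffeomorphism onto a connected manifold is a covering map.
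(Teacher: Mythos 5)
Your proposal is correct and follows essentially the same route as the paper: invoke the fact that $\Psi_F$ is an immersion between manifolds of equal dimension (hence a local diffeomorphism) and then use compactness of $M$ and connectedness of $F$ to conclude it is a covering map, the paper simply citing do Carmo for this last step where you spell out the evenly-covered-neighbourhood construction. The final deduction for simply-connected $F$ is identical.
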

\begin{proof} We give the proof only for the case of the distribution $D_{\xi}$.  We claim that  the map $\Psi_{F}\colon M \to F$ is a local diffeomorphism.
Indeed,  we know that $\Psi_{F}\colon M \to F$ is an immersion between manifolds of the same dimension. The compactness of $M$ and the connectedness of $F$ imply that $\Psi_F$ is a covering map
(see \cite[Proposition  5.6.1]{Carmo1} for details).
\end{proof}
\begin{remark}
 {\rm  Under the assumptions of Theorem \ref{260723A}, if $F$ is simply-connected necessarily must be a topological sphere. }
\end{remark}
\begin{remark}\label{140823A}
{\rm  The map $\Psi_{F}\colon M \to F$ is not a Riemannian covering, in general.
In fact,   as was mentioned  for  $\Psi=(\Psi_{B}, \Psi_{F})$, the induced metric on $M$ is given by
$
g=(\lambda \circ \Psi_{B})^{2}\Psi_{F}^{*}(g_{F}).
$
Taking into account the well-known relation  between the scalar curvatures of two conformally related metrics, see for instance \cite[Chap. 1]{Besse}, we have that
$$
S^{\Psi_{F}^{*}(g_{F})}=(\lambda \circ \Psi_{B})^2 \left(S^{g}+2(m-1)\Delta\log(\lambda\circ\Psi_B)-(m-2)(m-1)\|\nabla\log(\lambda\circ\psi_B)\|^2\right),
$$
where $S^{\Psi_{F}^{*}(g_{F})}$ and $S^g$ are the scalar curvatures of $\Psi_{F}^{*}(g_{F})$ and $g$, respectively. 
For $\lambda(t,r)=r$, we have $\lambda \circ \Psi_{B}=v$  and from a straightforward computation  the  above formula reads as follows
\begin{equation}\label{030623A}
S^{\Psi_{F}^{*}(g_{F})}=v^2 \left(S^{g}+\dfrac{2(m-1)}{v}\Delta v-\dfrac{m(m-1)}{v^2}\|\nabla v  \|^2\right).
\end{equation}

}
\end{remark}

\begin{proposition}\label{080723B}
Assume $\lambda(t,r)=r$ and let $\Psi:M\rightarrow B\times_{\lambda} F$ be a compact codimension two spacelike immersion  through an integral hypersurface of $D_{\xi}$ or $D_{\eta}$.   Then
$$
\int_{M}\left(S^{\Psi_{F}^{*}(g_{F})}- v^2S^g\right) d\mu_{g} \leq 0
$$
and the equality holds if and only if $M$ factors through a slice.
\end{proposition}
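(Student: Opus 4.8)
The plan is to integrate the conformal-change formula (\ref{030623A}) over the compact manifold $M$ and exploit the divergence theorem. Starting from
$$
S^{\Psi_{F}^{*}(g_{F})} - v^{2}S^{g} = 2(m-1)\,v\,\Delta v - m(m-1)\|\nabla v\|^{2},
$$
which is just (\ref{030623A}) rearranged, I would integrate both sides against $d\mu_{g}$. The first term on the right is handled by integration by parts: since $v\,\Delta v = \mathrm{div}(v\,\nabla v) - \|\nabla v\|^{2}$, the compactness of $M$ gives $\int_{M} v\,\Delta v\, d\mu_{g} = -\int_{M}\|\nabla v\|^{2}\, d\mu_{g}$. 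Substituting, the right-hand side collapses to
$$
\int_{M}\left(S^{\Psi_{F}^{*}(g_{F})} - v^{2}S^{g}\right) d\mu_{g} = -2(m-1)\int_{M}\|\nabla v\|^{2}\, d\mu_{g} - m(m-1)\int_{M}\|\nabla v\|^{2}\, d\mu_{g} = -(m+1)(m-1)\int_{M}\|\nabla v\|^{2}\, d\mu_{g}.
$$
Since $m = n \geq 2$, the coefficient $(m+1)(m-1)$ is strictly positive, so the integral is $\leq 0$, which is the stated inequality.

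For the equality case, the displayed identity forces $\int_{M}\|\nabla v\|^{2}\, d\mu_{g} = 0$, hence $\nabla v \equiv 0$ on $M$, i.e. $v$ is a constant $r_{0}$. By Lemma \ref{020323A} (applied in whichever of the two cases we are in, $D_{\xi}$ or $D_{\eta}$), the relation $\nabla v = \pm (f\circ\Psi_{B})^{2}\nabla u$ then forces $\nabla u \equiv 0$ as well, so $u$ is also constant, say $t_{0}$. Thus $\Psi_{B}$ is the constant map $(t_{0}, r_{0})$ and $\Psi$ factors through the slice at level $(t_{0}, r_{0})$. Conversely, if $M$ factors through a slice then $v$ is constant, $\nabla v = 0$, and the integrand $S^{\Psi_{F}^{*}(g_{F})} - v^{2}S^{g}$ vanishes pointwise by (\ref{030623A}) (all derivative terms drop), so the integral is zero. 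This gives the equivalence.

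I do not anticipate a serious obstacle here: the argument is a one-line integration by parts plus an appeal to Lemma \ref{020323A} for the rigidity conclusion. The only points requiring a modicum of care are (i) confirming that (\ref{030623A}) is valid in \emph{both} the $D_{\xi}$ and $D_{\eta}$ cases — this is exactly Remark \ref{140823A}, which derives the formula from the induced metric $g = v^{2}\Psi_{F}^{*}(g_{F})$, a relation that holds in either case by Remark \ref{110923B} — and (ii) making sure the sign of the coefficient $(m+1)(m-1)$ is positive, which holds because $n = m \geq 2$ is assumed throughout the paper. One could also present the computation slightly differently by writing $v\,\Delta v - \|\nabla v\|^2 = \tfrac12\Delta(v^2) - 2\|\nabla v\|^2$ and integrating $\Delta(v^2)$ to zero; the bookkeeping is the same and yields the identical constant.
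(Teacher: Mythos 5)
Your proof is correct and follows essentially the same route as the paper: integrate (\ref{030623A}) over $M$, integrate by parts (the paper uses $\Delta v^{2}=2v\Delta v+2\|\nabla v\|^{2}$ where you use $v\,\Delta v=\mathrm{div}(v\nabla v)-\|\nabla v\|^{2}$, which is the same computation), and invoke Lemma \ref{020323A} for the equality case. The only flaw is an arithmetic slip in your final constant: $-2(m-1)-m(m-1)=-(m-1)(m+2)$, not $-(m+1)(m-1)$; since both are strictly negative for $m\geq 2$, this does not affect the inequality or the rigidity conclusion.
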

\begin{proof}
From (\ref{030623A}), a direct computation gives that
$$
\int_{M}\left(S^{\Psi_{F}^{*}(g_{F})}- v^2S^g\right) d\mu_{g}=(m-1)\int_{M}(2v\Delta v-m \| \nabla v\|^{2})d\mu_{g}.
$$
Taking into account that $\Delta v^2= 2v \Delta v+ 2 \| \nabla v\|^{2}$, the above formula reduces to
$$
\int_{M}\left(S^{\Psi_{F}^{*}(g_{F})}- v^2S^g\right) d\mu_{g}=-(m-1)(m+2)\int_{M} \| \nabla v\|^{2}d\mu_{g}.
$$
Then, Lemma \ref{020323A} ends the proof.
\end{proof}

As a direct consequence of Corollary \ref{020623A} and Proposition \ref{020623B} we have.

\begin{corollary}
Assume $\eta  \lambda$ $($resp.  $\xi \lambda )$ signed. Then every  codimension two compact spacelike immersion  through an integral hypersurface  $\mathcal{L}$ (resp.  $\mathcal{N}$) of $D_{\xi}$  $($resp. $D_{\eta})$  with $\mathbf{H}=0$  factors through a slice  and $\Psi_{F}\colon (M, g)\to (F, \lambda(t_{0}, r_{0})^{2}g_{F})$ is a Riemannian covering space.
\end{corollary}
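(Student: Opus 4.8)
The plan is to combine Corollary \ref{020623A} with Proposition \ref{020623B} and then read off the induced metric explicitly; there is really no new content here beyond bookkeeping. I treat only the case of the distribution $D_{\xi}$ with $\eta\lambda$ signed, the statement for $D_{\eta}$ being entirely parallel (use Corollary \ref{020623A} for $D_{\eta}$ instead).

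First I would invoke Corollary \ref{020623A}: since $M$ is compact, $\Psi$ factors through an integral hypersurface $\mathcal{L}$ of $D_{\xi}$, the function $\eta\lambda$ is signed, and $\mathbf{H}=0$, that corollary already gives that $u\equiv t_{0}$ and $v\equiv r_{0}$ are constant functions with $\nabla^{B}\lambda(t_{0},r_{0})=0$, and that $\Psi_{F}$ is minimal in $F$. In particular $\Psi_{B}$ is the constant map $(t_{0},r_{0})$, so $\Psi$ factors through the slice at level $(t_{0},r_{0})$; this is the first assertion of the corollary.

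For the Riemannian covering claim I would recall from Proposition \ref{020623B} that, for a codimension two compact spacelike immersion through $\mathcal{L}$, the projection $\Psi_{F}\colon M\to F$ is a smooth covering map (and $F$ is compact). It then remains only to check that it is a local isometry for the stated metrics. By Remark \ref{110923B}, the induced metric on $M$ is $g=(\lambda\circ\Psi_{B})^{2}\,\Psi_{F}^{*}(g_{F})$; since $\Psi_{B}\equiv(t_{0},r_{0})$ is constant by the previous paragraph, this becomes $g=\lambda(t_{0},r_{0})^{2}\,\Psi_{F}^{*}(g_{F})=\Psi_{F}^{*}\big(\lambda(t_{0},r_{0})^{2}g_{F}\big)$. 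Hence $\Psi_{F}$ is a local isometry from $(M,g)$ onto $(F,\lambda(t_{0},r_{0})^{2}g_{F})$, and a covering map which is a local isometry is a Riemannian covering, which finishes the proof.

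The only point requiring a little care — and the closest thing to an obstacle — is to use the constancy of the full map $\Psi_{B}$ (not merely of $v$) in order to pull the conformal factor out of $g$ as a genuine constant; this is exactly what Lemma \ref{020323A} supplies, forcing $u$ to be constant once $v$ is, and it is already incorporated into the conclusion of Corollary \ref{020623A}. One should also note, directly from Definition \ref{190723B}, that the slice inclusion $F\hookrightarrow B\times_{\lambda}F$ at $(t_{0},r_{0})$ indeed carries the metric $\lambda(t_{0},r_{0})^{2}g_{F}$, so that the target metric in the statement is the natural one.
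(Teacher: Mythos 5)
Your argument is exactly the paper's: the authors state this corollary as "a direct consequence of Corollary \ref{020623A} and Proposition \ref{020623B}," and your write-up simply fills in the routine verification that, once $\Psi_{B}\equiv(t_{0},r_{0})$ is constant, the identity $g=(\lambda\circ\Psi_{B})^{2}\Psi_{F}^{*}(g_{F})$ from Remark \ref{110923B} makes the covering map $\Psi_{F}$ a local isometry onto $(F,\lambda(t_{0},r_{0})^{2}g_{F})$. The proof is correct and matches the intended route.
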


\begin{proposition}\label{010623B}
Let $\Psi:M\rightarrow B\times_{\lambda} F$ be a codimension two spacelike immersion  through an integral hypersurface $\mathcal{L}$ (resp.  $\mathcal{N}$) of $D_{\xi}$  $($resp. $D_{\eta})$.  Then  the normal tangent bundle $TM^{\perp}$ is spanned by the vector fields $\xi^{\perp}$ and $\eta^{\perp}$.  When $M$ factors through an integral hypersurface $\mathcal{L}$ of $D_{\xi}$,
 the mean curvature vector field is
\begin{equation}\label{Hxi}
\mathbf{H}=\left[\dfrac{\eta\lambda}{\lambda}\circ \Psi_{B}-\Big(\dfrac{\xi\lambda}{\lambda}\circ \Psi_{B}\Big)\|\nabla v\|^2+\dfrac{1}{m}\Delta v\right]\xi+\Big(\dfrac{\xi\lambda}{\lambda}\circ \Psi_{B}\Big)\eta^{\perp}
\end{equation}
and when $M$ factors through an integral hypersurface $\mathcal{N}$ of $D_{\eta}$ we have
$$
\mathbf{H}=\Big(\dfrac{\eta\lambda}{\lambda}\circ \Psi_{B}\Big)\xi^{\bot}+\left[\dfrac{\xi\lambda}{\lambda}\circ \Psi_{B}-\Big(\dfrac{4\,\eta\lambda}{\lambda f^4}\circ \Psi_{B}\Big)\|\nabla v\|^2-\dfrac{2}{m(f\circ \Psi_{B})^2}\Delta v\right]\eta.
$$
\end{proposition}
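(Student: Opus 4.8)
The plan is to compute the mean curvature vector field by splitting the trace of the second fundamental form along the two-dimensional normal bundle $TM^{\perp}$, which by the last assertion of Proposition~\ref{010623B} is spanned by $\xi^{\perp}$ and $\eta^{\perp}$ (or by $\xi$ and $\eta^{\perp}$, since $\xi = \xi^{\perp}$ when $M$ factors through $\mathcal{L}$). The first thing to settle is the claim that $TM^{\perp}$ is spanned by $\xi^{\perp}$ and $\eta^{\perp}$: since $M$ has codimension two and $\widetilde{g}(\xi,\eta) = -1$, the two restrictions $\xi\!\mid_{\Psi}$, $\eta\!\mid_{\Psi}$ are linearly independent at every point, and by item (4) of Lemma~\ref{210523D} (resp.\ Lemma~\ref{080623A}) the pair $\xi, \ell^{\xi}$ (resp.\ $\eta, \ell^{\eta}$) already frames $TM^{\perp}$; since $\ell^{\xi}$ is an explicit combination of $\xi$ and $\eta^{\perp}$ with a nonzero $\eta^{\perp}$-coefficient, $\xi^{\perp}$ and $\eta^{\perp}$ span $TM^{\perp}$ as well. (In the $\mathcal{L}$ case $\xi^{\perp} = \xi$ because $\xi^{\top} = 0$.)

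Next I would expand $\mathbf{H}$ in the basis $\{\xi, \eta^{\perp}\}$. Using $\widetilde{g}(\xi,\xi) = 0$, $\widetilde{g}(\xi,\eta^{\perp}) = \widetilde{g}(\xi,\eta) = -1$ and $\widetilde{g}(\eta^{\perp},\eta^{\perp}) = \|\nabla v\|^2$ (the latter from $\eta^{\top} = -\nabla v$, item (2) of Lemma~\ref{210523D}, together with $\widetilde{g}(\eta,\eta) = 0$), one writes $\mathbf{H} = a\,\xi + b\,\eta^{\perp}$ and solves the $2\times 2$ linear system given by pairing with $\xi$ and $\eta^{\perp}$. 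This yields $b = \widetilde{g}(\mathbf{H},\xi)$ and $a = -\widetilde{g}(\mathbf{H},\eta^{\perp}) - \|\nabla v\|^2\,\widetilde{g}(\mathbf{H},\xi)$. Then I substitute the two traces already computed in Lemma~\ref{16032023A}: $\widetilde{g}(\mathbf{H},\xi) = -\frac{\xi\lambda}{\lambda}\circ\Psi_B$ (from $A_{\xi} V = -(\frac{\xi\lambda}{\lambda}\circ\Psi_B)V$, so $\mathrm{trace}\,A_{\xi} = -m\,\frac{\xi\lambda}{\lambda}\circ\Psi_B$ and divide by $m$) and $\widetilde{g}(\mathbf{H},\eta^{\perp}) = -\frac{\eta\lambda}{\lambda}\circ\Psi_B - \frac{1}{m}\Delta v$. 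Plugging in gives $b = -\frac{\xi\lambda}{\lambda}\circ\Psi_B$... wait — comparing with the stated formula, the $\eta^{\perp}$-coefficient is $+\frac{\xi\lambda}{\lambda}\circ\Psi_B$, so I should be careful about a sign coming from $\widetilde{g}(\xi,\eta^{\perp}) = -1$: the coefficient of $\eta^{\perp}$ in $\mathbf{H}$ is obtained from pairing with $\xi$ and dividing by $\widetilde{g}(\eta^{\perp},\xi) = -1$, which flips the sign to $+\frac{\xi\lambda}{\lambda}\circ\Psi_B$, matching~(\ref{Hxi}). Similarly $a = \frac{\eta\lambda}{\lambda}\circ\Psi_B + \frac{1}{m}\Delta v - \|\nabla v\|^2\,\frac{\xi\lambda}{\lambda}\circ\Psi_B$, exactly the bracketed coefficient of $\xi$ in~(\ref{Hxi}).

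For the $\mathcal{N}$ case I would run the identical argument with the roles of $\xi$ and $\eta$ exchanged, using item (2) of Lemma~\ref{080623A} ($\xi^{\top} = (\frac{2}{f^2}\circ\Psi_B)\nabla v$, hence $\widetilde{g}(\xi^{\perp},\xi^{\perp}) = -\frac{4}{(f\circ\Psi_B)^4}\|\nabla v\|^2$) and the two traces in Lemma~\ref{16032023B}: $\widetilde{g}(\mathbf{H},\eta) = -\frac{\eta\lambda}{\lambda}\circ\Psi_B$ and $\widetilde{g}(\mathbf{H},\xi^{\perp}) = -\frac{\xi\lambda}{\lambda}\circ\Psi_B + \frac{2}{m(f\circ\Psi_B)^2}\Delta v$; expanding $\mathbf{H} = a'\,\xi^{\perp} + b'\,\eta$ in the basis $\{\xi^{\perp},\eta\}$ and solving the corresponding linear system produces the second displayed formula, the extra factors of $2$ and $f^{-2}$, $f^{-4}$ tracking exactly the normalizations in Lemma~\ref{080623A}. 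The only mildly delicate point is bookkeeping the signs coming from $\widetilde{g}(\xi,\eta)=-1$ and from $\eta^{\top}=-\nabla v$; there is no real obstacle, since every ingredient (the spanning statement, the shape operators, the causal norms of $\xi^{\perp}$ and $\eta^{\perp}$) is already available from Lemmas~\ref{210523D}, \ref{080623A}, \ref{16032023A} and \ref{16032023B}.
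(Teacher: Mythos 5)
Your overall strategy is exactly the paper's: write $\mathbf{H}=a\,\xi+b\,\eta^{\perp}$ in the normal frame $\{\xi,\eta^{\perp}\}$, determine $a,b$ by pairing with $\xi$ and $\eta^{\perp}$, and feed in the traces $\widetilde{g}(\mathbf{H},\xi)=-\frac{\xi\lambda}{\lambda}\circ\Psi_{B}$ and $\widetilde{g}(\mathbf{H},\eta^{\perp})=-\frac{\eta\lambda}{\lambda}\circ\Psi_{B}-\frac{1}{m}\Delta v$ from Lemma \ref{16032023A} (via (\ref{230321C})). The spanning claim and the $\mathcal{N}$ case are handled the same way as in the paper.

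There is, however, a sign error in the one computation you flagged as ``mildly delicate.'' Since $\eta$ is lightlike and $\eta^{\top}=-\nabla v$ is spacelike, $0=\widetilde{g}(\eta,\eta)=\|\nabla v\|^{2}+\widetilde{g}(\eta^{\perp},\eta^{\perp})$ forces $\widetilde{g}(\eta^{\perp},\eta^{\perp})=-\|\nabla v\|^{2}$, not $+\|\nabla v\|^{2}$ as you wrote (you get the analogous quantity right in the $\mathcal{N}$ case, where you correctly record $\widetilde{g}(\xi^{\perp},\xi^{\perp})=-\frac{4}{(f\circ\Psi_{B})^{4}}\|\nabla v\|^{2}$). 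With the wrong sign your linear system yields $a=-\widetilde{g}(\mathbf{H},\eta^{\perp})-\|\nabla v\|^{2}\,\widetilde{g}(\mathbf{H},\xi)$, whereas the correct relation is $a=-\widetilde{g}(\mathbf{H},\eta^{\perp})+\|\nabla v\|^{2}\,\widetilde{g}(\mathbf{H},\xi)=\widetilde{g}\bigl(\mathbf{H},\|\nabla v\|^{2}\xi-\eta^{\perp}\bigr)$, which is what the paper uses. As written, substituting $\widetilde{g}(\mathbf{H},\xi)=-\frac{\xi\lambda}{\lambda}\circ\Psi_{B}$ into \emph{your} formula would give $+\bigl(\frac{\xi\lambda}{\lambda}\circ\Psi_{B}\bigr)\|\nabla v\|^{2}$ in the $\xi$-coefficient, contradicting (\ref{Hxi}); you nonetheless write down the correct final coefficient, so a second, compensating slip occurs in the substitution. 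The fix is purely local: correct $\widetilde{g}(\eta^{\perp},\eta^{\perp})$ to $-\|\nabla v\|^{2}$ and the derivation becomes consistent and identical to the paper's.
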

\begin{proof} The assertion on the normal tangent bundle is a direct consequence of $\xi^{\bot}=\xi$ (resp.  $\eta^{\bot}=\eta$) and $\widetilde{g}(\xi, \eta)=-1.$
Hence,  there are smooth functions $a,b\in C^{\infty}(M)$ such  that
$$
\mathbf{H}=a\xi+b\eta^{\perp}
$$
where $b=-\widetilde{g}(\mathbf{H}, \xi)$ and  from Lemma \ref{210523D} we can compute that $a=\widetilde{g}(\mathbf{H}, \|\nabla v\|^2\xi-\eta^{\perp}).$
Now  formula (\ref{230321C})  and Lemma \ref{16032023A}  imply that
$$
\widetilde{g}(\mathbf{H}, \xi)=-\dfrac{\xi\lambda}{\lambda}\circ \Psi_{B}\quad \textrm{ and } \quad \widetilde{g}(\mathbf{H}, \eta^{\perp})=-\dfrac{\eta\lambda}{\lambda}\circ \Psi_{B}-\dfrac{1}{m}\Delta v.
$$
This completes the proof for the case of spacelike submanifolds through integral hypersurfaces of $D_{\xi}$.  Slight changes in the proof show the formula for the mean curvature vector field in the case $D_{\eta}.$

\end{proof}

Under the same assumptions of Proposition \ref{010623B}
and from Lemmas \ref{210523D} and \ref{080623A}, we have.

\begin{corollary}\label{280723D}
For a spacelike submanifold  $M$ which factors through an integral hypersurface $\mathcal{L}$ of $D_{\xi}$, we have
$$
\mathbf{H}=\left[\dfrac{\eta\lambda}{\lambda}\circ \Psi_{B}-\Big(\dfrac{\xi\lambda}{2\lambda}\circ \Psi_{B}\Big)\|\nabla v\|^2+\dfrac{1}{m}\Delta v\right]\xi+\Big(\dfrac{\xi\lambda}{\lambda}\circ \Psi_{B}\Big)\ell^{\xi}.
$$
In case that $M$ factors through an integral hypersurface  $\mathcal{N}$  of $D_{\eta}$, 
$$
\mathbf{H}=\Big(\dfrac{\eta\lambda}{\lambda}\circ \Psi_{B}\Big)\ell^{\eta}+\left[\dfrac{\xi\lambda}{\lambda}\circ \Psi_{B}-\Big(\dfrac{2\,\eta\lambda}{\lambda f^4}\circ \Psi_{B}\Big)\|\nabla v\|^2-\dfrac{2}{m(f\circ \Psi_{B})^2}\Delta v\right]\eta.
$$

\end{corollary}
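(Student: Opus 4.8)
The statement is a direct rewriting of Proposition \ref{010623B} in terms of the normalized null frames introduced in Lemma \ref{210523D}(4) and Lemma \ref{080623A}(4), so the plan is essentially a change of basis in the two–dimensional normal bundle followed by bookkeeping of coefficients.

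For the case in which $M$ factors through an integral hypersurface $\mathcal{L}$ of $D_{\xi}$, I would start from the expression
$$
\mathbf{H}=\left[\dfrac{\eta\lambda}{\lambda}\circ \Psi_{B}-\Big(\dfrac{\xi\lambda}{\lambda}\circ \Psi_{B}\Big)\|\nabla v\|^2+\dfrac{1}{m}\Delta v\right]\xi+\Big(\dfrac{\xi\lambda}{\lambda}\circ \Psi_{B}\Big)\eta^{\perp}
$$
given by Proposition \ref{010623B}. By Lemma \ref{210523D}(4) we have $\ell^{\xi}=-\tfrac{\|\nabla v\|^{2}}{2}\xi+\eta^{\perp}$, hence $\eta^{\perp}=\ell^{\xi}+\tfrac{\|\nabla v\|^{2}}{2}\xi$; substituting this into the formula above and collecting the two contributions along $\xi$ turns the coefficient $\tfrac{\eta\lambda}{\lambda}\circ\Psi_{B}-\big(\tfrac{\xi\lambda}{\lambda}\circ\Psi_{B}\big)\|\nabla v\|^{2}+\tfrac{1}{m}\Delta v$ into $\tfrac{\eta\lambda}{\lambda}\circ\Psi_{B}-\big(\tfrac{\xi\lambda}{2\lambda}\circ\Psi_{B}\big)\|\nabla v\|^{2}+\tfrac{1}{m}\Delta v$, which is the claimed expression, while the coefficient of $\ell^{\xi}$ remains $\tfrac{\xi\lambda}{\lambda}\circ\Psi_{B}$.

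For the case in which $M$ factors through an integral hypersurface $\mathcal{N}$ of $D_{\eta}$, I would proceed symmetrically, starting from the second formula in Proposition \ref{010623B} and using Lemma \ref{080623A}(4), which gives $\ell^{\eta}=\xi^{\perp}-\tfrac{2\|\nabla v\|^{2}}{(f\circ\Psi_{B})^{4}}\eta$, i.e. $\xi^{\perp}=\ell^{\eta}+\tfrac{2\|\nabla v\|^{2}}{(f\circ\Psi_{B})^{4}}\eta$. Substituting and gathering the terms along $\eta$ converts the coefficient $-\big(\tfrac{4\,\eta\lambda}{\lambda f^{4}}\circ\Psi_{B}\big)\|\nabla v\|^{2}$ into $-\big(\tfrac{2\,\eta\lambda}{\lambda f^{4}}\circ\Psi_{B}\big)\|\nabla v\|^{2}$, leaving the coefficient of $\ell^{\eta}$ equal to $\tfrac{\eta\lambda}{\lambda}\circ\Psi_{B}$, which is the second displayed identity.

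There is no genuine obstacle here: the only thing to be careful about is the factor $1/2$ appearing when passing from $\eta^{\perp}$ to $\ell^{\xi}$ (and the factor $2/(f\circ\Psi_{B})^{4}$ in the $D_{\eta}$ case), since these are precisely what halves the $\|\nabla v\|^{2}$ coefficients. I would therefore keep the algebra explicit at that single point and leave the rest as a routine verification.
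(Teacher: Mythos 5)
Your proposal is correct and is exactly the paper's own argument: the paper derives Corollary \ref{280723D} directly from Proposition \ref{010623B} together with part (4) of Lemmas \ref{210523D} and \ref{080623A}, i.e.\ by the same substitution $\eta^{\perp}=\ell^{\xi}+\tfrac{\|\nabla v\|^{2}}{2}\xi$ (resp.\ $\xi^{\perp}=\ell^{\eta}+\tfrac{2\|\nabla v\|^{2}}{(f\circ\Psi_{B})^{4}}\eta$) and regrouping of the $\xi$ (resp.\ $\eta$) coefficients. Your bookkeeping of the factors $1/2$ and $2/(f\circ\Psi_{B})^{4}$ is accurate, so nothing is missing.
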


\begin{remark}\label{140823B}
{\rm This Corollary extends the formulas for the mean curvature vector field of codimension two spacelike submanifolds through lightlike hyperplanes and cones in the Minkowski spacetime, \cite{ACR2} and \cite{PaRo}. For submanifolds through lightlike hyperplanes, we particularize Corollary \ref{280723D}
for $f(r)=1$ and $\lambda(t,r)=1$  (see Example \ref{280723A}), then
$
\mathbf{H}=\frac{1}{m}\Delta v\, \xi
$
for $M$  in the lightlike hyperplane  $\Pi_{\xi}:= \{x\in \L^{m+2}: \langle x, \xi\rangle=0\}$. 
Similarly, we obtain    
$
\mathbf{H}=-\frac{2}{m}\Delta v \, \eta
$
when $M$ factors through the lightlike hyperplane $\Pi_{\eta}$. It can be easily seen that our formulas for $\mathbf{H}$ coincides with the formula (8.2) given in \cite[Sect. 8]{ACR2}.

For submanifolds through lightlike cones we need to take $f(r)=1$ and $\lambda(t,r)=r$, then we compute the mean curvature as follows
$$
\mathbf{H}=\Big( \frac{-1-\|\nabla v\|^2}{2v}+ \dfrac{\Delta v}{m}\Big)\xi +  \frac{1}{v}\ell^{\xi},\quad\textrm{{$D_{\xi}$ case}}
$$
and
$$
\mathbf{H}=-  \frac{1}{2v}\ell^{\eta}+\Big( \dfrac{1+\|\nabla v\|^2}{v}-\frac{2\Delta v}{m}\Big)\eta, \quad\textrm{{$D_{\eta}$ case}}.
$$
Theses formulas agree with \cite{PaRo} and  \cite[Sect. 6]{ACR2}. In fact, let us denote by $\Lambda^+\subset\L^{m+2}$  the future lightlike cone with vertex at $0\in \L^{m+2}.$ We know from (\ref{16082023-1}) that  $\hat{\xi}:=r\xi$ satisfies  $T\varphi\cdot\hat{\xi}$ is the position vector field in $\Lambda^+$. Therefore, if we rescale $\hat{l}:=\frac{1}{r}l^{\xi}$, the first formula of $\mathbf{H}$ expressed in terms of $\hat{\xi}$ and $\hat{l}$ coincides with the formula given in  \cite{PaRo} and \cite[Sect. 6]{ACR2}. 
 }
\end{remark}

\begin{remark}\label{060823B}
{\rm Assume the warping function $\lambda$ depends only on the radial coordinate $r$.
According to (\ref{Hxi}), the existence of a codimension two  spacelike  immersion with $\mathbf{H}=0$ through an integral hypersurface $\mathcal{L}$  of $D_{\xi}$ 
implies
$
 \lambda_{r} \circ \Psi_{B}=0.
$
In particular, there are no such spacelike immersions in the exterior Schwarzschild spacetime with mass $\mathbf{M}$.
The same result remains true for  spacelike  immersions   through an integral hypersurface $\mathcal{N}$  of $D_{\eta}.$ 

}

\end{remark}

\begin{remark}
{\rm Let $\Psi:M\rightarrow B\times_{\lambda} F$ be a codimension two spacelike immersion  through an integral hypersurface $\mathcal{L}$ of $D_{\xi}$ 
 with $\lambda(t,r)=r$.  Proposition \ref{28032023A} and Corollary \ref{280723D} give
that the normal lightlike vector field $\ell := v \,\xi$ satisfies
\begin{equation}\label{110823A}
\widetilde{g}(\ell, \mathbf{H})=-1\quad  \textrm{and} \quad \nabla^{\perp}\ell=0.
\end{equation}
In the case that the spacelike immersion  lies in an integral hypersurface $\mathcal{N}$ of $D_{\eta}$, the lightlike normal vector field $\ell:=\frac{-2v}{(f\circ \Psi_{B})^2}\eta$ satisfies 
$
\widetilde{g}(\ell, \mathbf{H})=-1$ and  $\nabla^{\perp}\ell=0.
$
Assuming that $M$ is compact and $F$ is the round sphere, the existence of a normal lightlike vector field $\ell $ such that (\ref{110823A}) holds implies that $M$ lies in an integral  lightlike hypersurface of $D_{\xi}$ or $D_{\eta}$, \cite{WWZ}.
The authors of \cite{WWZ} call these hypersurfaces as null hypersurfaces of symmetry.  In other words, the null hypersurfaces generated by the round sphere.

}

\end{remark}

From formulas $(2)$ in Lemma \ref{210523D} and Lemma \ref{080623A} and  Proposition  \ref{010623B}, we get.

\begin{corollary}\label{16032023E}
Under the same assumptions of Proposition \ref{010623B},  for a spacelike immersion through an integral hypersurface $\mathcal{L}$ of $D_{\xi}$  we have that
$$
\|\mathbf{H}\|^{2}=-\frac{2\, \eta \lambda\,  \xi \lambda}{\lambda^{2}}\circ \Psi_{B}- \Big(\frac{2\, \xi \lambda}{m\, \lambda}\circ \Psi_{B}\Big)\Delta v+\Big(\frac{\xi \, \lambda}{\lambda}\circ \Psi_{B}\Big)^{2}\|\nabla v\|^{2}
$$
and for a spacelike immersion through an integral hypersurface $\mathcal{N}$ of $D_{\eta}$ 
$$
\|\mathbf{H}\|^{2}=-\frac{2\, \eta \lambda\,  \xi \lambda}{\lambda^{2}}\circ \Psi_{B}+ \Big(\frac{4\, \eta \lambda}{m\, \lambda f^{2}}\circ \Psi_{B}\Big)\Delta v+4\Big(\frac{\eta \, \lambda}{\lambda f^{2}}\circ \Psi_{B}\Big)^{2}\|\nabla v\|^{2}
$$
where  as usual $\|\mathbf{H}\|^{2}$ denotes $\widetilde{g}(\mathbf{H}, \mathbf{H})$.
\end{corollary}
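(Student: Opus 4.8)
The plan is to start from the two closed-form expressions for the mean curvature vector field already obtained in Proposition \ref{010623B} and simply expand $\widetilde{g}(\mathbf{H},\mathbf{H})$ as a quadratic form in the appropriate lightlike normal frame; everything then reduces to computing the three pairwise $\widetilde{g}$-products of the two frame vectors. In the case in which $M$ factors through an integral hypersurface $\mathcal{L}$ of $D_{\xi}$, I would write $\mathbf{H}=a\,\xi+b\,\eta^{\perp}$ with $b=\frac{\xi\lambda}{\lambda}\circ\Psi_{B}$ and $a=\frac{\eta\lambda}{\lambda}\circ\Psi_{B}-b\,\|\nabla v\|^{2}+\frac{1}{m}\Delta v$, reading the coefficients off (\ref{Hxi}). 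The three products are computed as follows: $\widetilde{g}(\xi,\xi)=0$ because $\xi$ is lightlike; $\widetilde{g}(\xi,\eta^{\perp})=\widetilde{g}(\xi,\eta)=-1$, since here $\xi^{\top}=0$ forces $\xi=\xi^{\perp}$ to be normal, so $\widetilde{g}(\xi,\eta^{\top})=0$; and $\widetilde{g}(\eta^{\perp},\eta^{\perp})=-\widetilde{g}(\eta^{\top},\eta^{\top})=-\|\nabla v\|^{2}$, obtained by inserting the decomposition $\eta=\eta^{\top}+\eta^{\perp}$ into the identity $\widetilde{g}(\eta,\eta)=0$ and using $\eta^{\top}=-\nabla v$ from part $(2)$ of Lemma \ref{210523D}. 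Thus $\|\mathbf{H}\|^{2}=-2ab-b^{2}\|\nabla v\|^{2}$, and substituting $a$ and $b$ and collecting the $\|\nabla v\|^{2}$-terms (the $2b^{2}\|\nabla v\|^{2}$ from $-2ab$ against the $-b^{2}\|\nabla v\|^{2}$) yields the first stated identity.

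For the case in which $M$ factors through an integral hypersurface $\mathcal{N}$ of $D_{\eta}$ the argument is formally the same with the roles of $\xi$ and $\eta$ interchanged: now $\eta^{\top}=0$, $\eta=\eta^{\perp}$, and part $(2)$ of Lemma \ref{080623A} gives $\xi^{\top}=\bigl(\tfrac{2}{f^{2}}\circ\Psi_{B}\bigr)\nabla v$. Writing $\mathbf{H}=c\,\xi^{\perp}+d\,\eta$ with $c$ and $d$ as in Proposition \ref{010623B}, the same bookkeeping gives $\widetilde{g}(\eta,\eta)=0$, $\widetilde{g}(\xi^{\perp},\eta)=\widetilde{g}(\xi,\eta)=-1$ and $\widetilde{g}(\xi^{\perp},\xi^{\perp})=-\widetilde{g}(\xi^{\top},\xi^{\top})=-\tfrac{4}{(f\circ\Psi_{B})^{4}}\|\nabla v\|^{2}$, so that $\|\mathbf{H}\|^{2}=-\tfrac{4c^{2}}{(f\circ\Psi_{B})^{4}}\|\nabla v\|^{2}-2cd$; expanding and simplifying then produces the second identity.

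I do not expect a genuine obstacle: the one structural point that must be handled correctly is the identity $\widetilde{g}(L^{\perp},L^{\perp})=-\widetilde{g}(L^{\top},L^{\top})$, valid for any lightlike $L$ along the spacelike immersion, applied to $L=\eta$ in the first case and $L=\xi$ in the second; beyond that it is merely careful bookkeeping of the numerical coefficients when squaring, in particular noticing that in each case the $\|\nabla v\|^{2}$-contribution coming from $\widetilde{g}(\cdot^{\perp},\cdot^{\perp})$ partially cancels the one generated by the cross term $-2ab$ (resp.\ $-2cd$), leaving the coefficients appearing in the statement.
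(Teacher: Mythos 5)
Your proposal is correct and follows essentially the same route the paper intends: it cites exactly Proposition \ref{010623B} together with the tangential parts $\eta^{\top}=-\nabla v$ and $\xi^{\top}=\bigl(\tfrac{2}{f^{2}}\circ\Psi_{B}\bigr)\nabla v$ from part $(2)$ of Lemmas \ref{210523D} and \ref{080623A}, which is precisely the data you use to evaluate $\widetilde{g}(\xi,\eta^{\perp})=-1$, $\widetilde{g}(\eta^{\perp},\eta^{\perp})=-\|\nabla v\|^{2}$ and $\widetilde{g}(\xi^{\perp},\xi^{\perp})=-\tfrac{4}{(f\circ\Psi_{B})^{4}}\|\nabla v\|^{2}$. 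The coefficient bookkeeping in both cases checks out against the stated formulas.
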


\begin{corollary}
Assume $\xi  \lambda$ $($resp.  $\eta \lambda )$ signed. Then every  codimension two compact spacelike  immersion  through an integral hypersurface $\mathcal{L}$ (resp.  $\mathcal{N}$) of $D_{\xi}$ (resp.  $D_{\eta}$)  with $$
\|\mathbf{H}\|^{2}=-\frac{2\, \eta \lambda\,  \xi \lambda}{\lambda^{2}}\circ \Psi_{B}
$$  factors through a slice.  
\end{corollary}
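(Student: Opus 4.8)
The plan is to deduce everything from Corollary~\ref{16032023E}. Consider first an integral hypersurface $\mathcal{L}$ of $D_{\xi}$ and set $\mu:=\frac{\xi\lambda}{\lambda}\circ\Psi_{B}\in C^{\infty}(M)$; since $\lambda>0$ and $\xi\lambda$ is signed along $\Psi$, the function $\mu$ is signed. By Corollary~\ref{16032023E} the hypothesis $\|\mathbf{H}\|^{2}=-\frac{2\,\eta\lambda\,\xi\lambda}{\lambda^{2}}\circ\Psi_{B}$ is, after cancelling the common term, equivalent to the pointwise identity $\mu^{2}\|\nabla v\|^{2}-\frac{2}{m}\,\mu\,\Delta v=0$ on $M$, i.e. $\mu\big(\Delta v-\tfrac{m}{2}\mu\|\nabla v\|^{2}\big)=0$. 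In particular $\mu\,\Delta v=\tfrac{m}{2}\,\mu^{2}\|\nabla v\|^{2}$ has constant sign on $M$.

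First I would use the sign of $\mu$, say $\mu\geq 0$ (the case $\mu\leq 0$ being symmetric). On the open set $\{\mu>0\}$ the identity reads $\Delta v=\tfrac{m}{2}\,\mu\|\nabla v\|^{2}\geq 0$, so $v$ is subharmonic there. When $\xi\lambda$ is nowhere zero along $\Psi$ --- which is exactly the situation in the physically relevant case $\lambda(t,r)=r$, where $\xi\lambda=\xi(r)=1$ --- this region is all of $M$, so $v$ is a globally subharmonic function on the compact manifold $M$; then $\int_{M}\Delta v\,d\mu_{g}=0$ forces $\Delta v\equiv 0$, and $\int_{M}\|\nabla v\|^{2}\,d\mu_{g}=-\int_{M}v\,\Delta v\,d\mu_{g}=0$ shows that $v$ is constant.

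To finish, I would invoke Lemma~\ref{020323A}: since the immersion factors through $\mathcal{L}$ we have $\nabla v=(f\circ\Psi_{B})^{2}\nabla u$, and $\nabla v=0$ together with $f>0$ yields $\nabla u=0$, so $u$ is constant as well. Hence $\Psi_{B}\equiv(t_{0},r_{0})$ and $M$ factors through the slice at level $(t_{0},r_{0})$. The statement for an integral hypersurface $\mathcal{N}$ of $D_{\eta}$ follows in exactly the same manner from the second formula of Corollary~\ref{16032023E}: with $\nu:=\frac{\eta\lambda}{\lambda f^{2}}\circ\Psi_{B}$ (signed, as $\lambda,f>0$), the hypothesis becomes $\nu\big(\Delta v+m\,\nu\|\nabla v\|^{2}\big)=0$, so $\Delta v$ again has constant sign where $\nu\neq 0$, and one concludes via Lemma~\ref{020323A} as above.

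The delicate step is the passage from $v$ being subharmonic on $\{\mu\neq 0\}$ to $v$ being constant on $M$ when $\xi\lambda$ (resp.\ $\eta\lambda$) is permitted to vanish somewhere, since over its zero set the identity above says nothing about $v$. If one assumes $\xi\lambda$ (resp.\ $\eta\lambda$) nowhere zero the obstruction disappears, and this already covers all the spacetimes with $\lambda(t,r)=r$ treated in the paper. In general one can observe that $\mu=\frac{\xi\lambda}{\lambda}\circ\Psi_{B}$ depends on $v$ alone --- because $\Psi_{B}$ maps $M$ into the integral curve of $\xi$, which is parametrized by $r$ --- so that $\{\mu=0\}$ is a level set of $v$, hence of measure zero for regular values by Sard's theorem, and the integral argument can be carried out on its complement.
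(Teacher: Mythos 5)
Your argument is, in its main line, exactly the paper's proof: substituting the hypothesis into Corollary \ref{16032023E} gives
$\big(\frac{2\,\xi\lambda}{m\lambda}\circ\Psi_B\big)\Delta v=\big(\frac{\xi\lambda}{\lambda}\circ\Psi_B\big)^{2}\|\nabla v\|^{2}$,
the sign hypothesis on $\xi\lambda$ makes $\Delta v$ signed, compactness and the divergence theorem force $v$ to be constant, and Lemma \ref{020323A} then makes $u$ constant, so $M$ factors through a slice; the $D_{\eta}$ case is handled identically. Where you go beyond the paper is in worrying about the zero set of $\mu=\frac{\xi\lambda}{\lambda}\circ\Psi_B$, and you are right to: the paper's one-line deduction that ``$\Delta v$ is also signed'' is only justified off $\{\mu=0\}$. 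Your Sard-type repair, however, does not close this gap. Since $\Psi_B$ takes values in a single integral curve of $\xi$, which is parametrized by $r$, one indeed has $\mu=F\circ v$ for a function $F$ of one real variable; but $\{\mu=0\}=v^{-1}\left(F^{-1}(0)\right)$ is the preimage of a closed subset of $\R$ that need not be a null set, and Sard's theorem (which concerns critical values of $v$, not zeros of $F$) gives no control over it. The extreme case shows the statement itself needs the stronger hypothesis: if $\xi\lambda\equiv 0$ along $\Psi$ (for instance $\lambda\equiv 1$, $f\equiv 1$, $F$ compact, and $M$ a non-constant section of $\mathcal{L}\to F$), then Corollary \ref{16032023E} yields $\|\mathbf{H}\|^{2}=0=-\frac{2\,\eta\lambda\,\xi\lambda}{\lambda^{2}}\circ\Psi_B$ automatically while $v$ is non-constant. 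So your proof is complete precisely when $\xi\lambda$ (resp. $\eta\lambda$) is nowhere zero along $\Psi$ --- which covers the case $\lambda(t,r)=r$ that you single out, and is the reading under which the paper's own proof is valid as well.
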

\begin{proof}
We give the proof only for the case of $D_{\xi}$. As a consequence of Corollary \ref{16032023E},  we have
$$
\Big(\frac{2\, \xi \lambda}{m\lambda}\circ \Psi_{B}\Big)\Delta v=\Big(\frac{\xi \, \lambda}{\lambda}\circ \Psi_{B}\Big)^{2}\|\nabla v\|^{2}.
$$
Since $\xi\lambda\circ\Psi_B$ is signed, we deduce that $\Delta v$ is also signed. We can now proceed analogously to the proof of Theorem \ref{20032023A}.

\end{proof}

\begin{remark}\label{070823A}
{\rm 
Corollary \ref{16032023E} provides  a formula which relates the mean curvature vector field and the scalar curvatures $S^{\Psi_{F}^{*}(g_{F})}$ and $S^g$. In fact, for a codimension two spacelike immersion through an integral hypersurface of $D_{\xi}$ or $D_{\eta}$, one computes from  (\ref{030623A})  that
\begin{equation}\label{080823A}
\|\mathbf{H}\|^{2}=\frac{1}{v^2}\left( (f\circ \Psi_{B})^2- \frac{S^{\Psi_{F}^{*}(g_{F})}- v^2  S^{g}}{m(m-1)}\right).
\end{equation}
If we specialize this formula for the case $m=2$,  we get that
$$
\|\mathbf{H}\|^{2}=\frac{1}{v^2}\left( (f\circ \Psi_{B})^2- K^{\Psi_{F}^{*}(g_{F})}+ v^2  K^g\right). 
$$
Hence,  if we assume $M$  compact, the Gauss-Bonnet formula implies
$$
\int_{M^2}\|\mathbf{H}\|^{2} \, d\mu_{g}=\int_{M^2}\frac{(f\circ \Psi_{B})^2}{v^2}\, d\mu_{g},
$$
where $d\mu_{g}$ is the canonical measure associated to the metric $g$.

}
\end{remark}

 Let us recall the following terminology in General Relativity.    

\begin{definition}\label{110823C}
{\rm A codimension two spacelike submanifold $M$ in a timelike orientable Lorentzian manifold is said to be future (resp.  past) trapped when $\mathbf{H}$ is timelike and future pointing (resp. past pointing).  $M$ is called marginally (resp. weakly) trapped if $\mathbf{H}$ is lightlike (resp. causal) on $M$.  The notions of future and past for marginally and weakly are obviously adapted.}
\end{definition}

\begin{remark}\label{080723A}
{\rm 
Assume $\lambda(t,r)=r$. Under the hypotheses of Corollary \ref{16032023E},   we have $\|\mathbf{H}\|^{2}\leq 0$ for a spacelike immersion  through an integral hypersurface of $D_{\xi}$ or $D_{\eta}$ if and only if
$$
\frac{2}{n }\Delta v\geq  \frac{ (f\circ \Psi_{B})^2+\|\nabla v\|^{2}}{v}.
$$
In particular, there are no compact weakly trapped  submanifolds in this case.  
Taking into account that for this choice of $\lambda$,  the manifold $B\times_{\lambda} F $ is stationary,  this result is only a particular case of  \cite[Theor. 2]{MS}.  In fact,
recall that  \cite[Theor. 2]{MS} states that there is no compact weakly trapped  submanifold in a stationary spacetime.
 }

\end{remark}

From Remark  \ref{060823B}, there is no point on $M$ where $\mathbf{H}=0$ and then, from (\ref{030623A}), we have.
\begin{corollary}\label{140823C}
Assume $\lambda (t,r)=r$ and let $\Psi:M\rightarrow B\times_{\lambda} F$ be a codimension two spacelike immersion  through an integral hypersurface of $D_{\xi}$ or $D_{\eta}$. Then the following assertions are equivalent.
\begin{enumerate}
\item $M$ is marginally trapped.
\item The function $v$ satisfies the equation
$$
2v \Delta v- m\Big[   (f\circ \Psi_{B})^2+\|\nabla v\|^{2}\Big]=0.
$$
\item The scalar curvature of $M$ satisfies
$$S^{\Psi_{F}^{*}(g_{F})}=v^2   S^{g}+ m(m-1)(f\circ \Psi_{B})^2.$$

\end{enumerate}

\end{corollary}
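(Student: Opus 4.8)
The plan is to reduce the whole equivalence to the single condition $\|\mathbf{H}\|^{2}\equiv 0$ and then read off $(2)$ and $(3)$ from the two expressions for $\|\mathbf{H}\|^{2}$ already established. The decisive preliminary observation, recorded in Remark \ref{060823B} (and stated in the lines preceding the corollary), is that for $\lambda(t,r)=r$ the mean curvature vector field of a codimension two spacelike immersion through an integral hypersurface of $D_{\xi}$ or $D_{\eta}$ has no zeros on $M$. By Proposition \ref{010623B} the normal bundle $TM^{\perp}$ is a rank-two Lorentzian subbundle, spanned by the lightlike fields $\xi^{\perp}$ and $\eta^{\perp}$ with $\widetilde{g}(\xi^{\perp},\eta^{\perp})=-1$, so a nowhere-zero normal field is lightlike precisely when its $\widetilde{g}$-norm vanishes identically. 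Hence assertion $(1)$ is equivalent to $\|\mathbf{H}\|^{2}\equiv 0$, and it suffices to show that each of $(2)$ and $(3)$ is equivalent to this.

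For $(1)\Leftrightarrow(2)$ I would specialize the formula for $\|\mathbf{H}\|^{2}$ in Corollary \ref{16032023E} to $\lambda(t,r)=r$. From (\ref{11102022B}) one computes, as functions on $B$, that $\xi\lambda=\xi r=1$ and $\eta\lambda=\eta r=-\tfrac12 f^{2}$, while $\lambda\circ\Psi_{B}=v$. Substituting these into either branch of Corollary \ref{16032023E} (the $D_{\xi}$ and $D_{\eta}$ branches collapse to the same expression after simplification) yields
$$
\|\mathbf{H}\|^{2}=\frac{(f\circ\Psi_{B})^{2}+\|\nabla v\|^{2}}{v^{2}}-\frac{2}{m\,v}\,\Delta v .
$$
Since $v>0$ and $m\geq 2$, multiplying through by $m v^{2}$ shows that $\|\mathbf{H}\|^{2}=0$ holds exactly when $2v\,\Delta v-m\big[(f\circ\Psi_{B})^{2}+\|\nabla v\|^{2}\big]=0$, which is assertion $(2)$.

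For $(1)\Leftrightarrow(3)$ I would invoke the identity (\ref{080823A}), valid here because $\lambda(t,r)=r$,
$$
\|\mathbf{H}\|^{2}=\frac{1}{v^{2}}\left((f\circ\Psi_{B})^{2}-\frac{S^{\Psi_{F}^{*}(g_{F})}-v^{2}S^{g}}{m(m-1)}\right),
$$
from which $\|\mathbf{H}\|^{2}=0$ is immediately equivalent to $S^{\Psi_{F}^{*}(g_{F})}=v^{2}S^{g}+m(m-1)(f\circ\Psi_{B})^{2}$, i.e.\ $(3)$. As a cross-check, $(2)\Leftrightarrow(3)$ can also be obtained directly by rewriting the conformal scalar-curvature relation (\ref{030623A}) as $S^{\Psi_{F}^{*}(g_{F})}-v^{2}S^{g}=(m-1)\big(2v\,\Delta v-m\|\nabla v\|^{2}\big)$ and substituting the equation in $(2)$, which turns the right-hand side into $m(m-1)(f\circ\Psi_{B})^{2}$, and conversely using $m-1\neq 0$.

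There is essentially no obstacle beyond bookkeeping, since all the formulas are already in place. The only step that genuinely exploits the hypothesis $\lambda(t,r)=r$ — and that would fail without it — is the reduction in the first paragraph, namely that \emph{$\mathbf{H}$ lightlike} is the same as \emph{$\|\mathbf{H}\|^{2}=0$}; this rests on the pointwise non-vanishing of $\mathbf{H}$ coming from Remark \ref{060823B} (the $\eta^{\perp}$-component of $\mathbf{H}$ equals $1/v$, which never vanishes). For a general warping function the zero set of $\mathbf{H}$ could be nonempty and the clean equivalence would break down.
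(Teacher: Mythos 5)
Your proposal is correct and follows essentially the same route as the paper: the paper's (very terse) argument likewise uses Remark \ref{060823B} to conclude that $\mathbf{H}$ never vanishes, so that \emph{marginally trapped} reduces to $\|\mathbf{H}\|^{2}=0$, and then reads off assertions $(2)$ and $(3)$ from the specialization of Corollary \ref{16032023E} and from formulas (\ref{080823A}) and (\ref{030623A}). Your computations of $\xi\lambda=1$, $\eta\lambda=-\tfrac12 f^{2}$ and the resulting common expression for $\|\mathbf{H}\|^{2}$ are exactly what the paper intends.
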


\smallskip

\begin{definition}
{\it  An immersion $\Psi:F\rightarrow B\times_{\lambda} F$ is said to be an (entire) spacelike graph on $F$ when
$$
\Psi(x)=(\Psi_B(x),x)
$$
and the induced metric $\Psi^{*}(\widetilde{g})$ is Riemannian.}
\end{definition}
\noindent
Recall that if a spacelike graph on $F$ factors through an integral hypersurface of $D_{\xi}$ or $D_{\eta}$, the induced metric is $g=(\lambda\circ \Psi_{B})^{2}g_{F}$.    Assume $\lambda(t,r)=r$.
Taking into account that $\Psi_{F}= \mathrm{Id}_{F}$ for spacelike graphs,  formula (\ref{080823A}) implies  that  for every graph  factoring through an integral hypersurface of $D_{\xi}$ or $D_{\eta}$,  the mean curvature vector field $\mathbf{H}$ and the scalar curvatures  $S^{g_{F}}$ and $S^g$ are related by
\begin{equation}\label{04092023}
 \|\mathbf{H}\|^{2}=\frac{1}{v^2}\left( (f\circ \Psi_{B})^2- \frac{S^{g_{F}}- v^2  S^{g}}{m(m-1)}\right).   
\end{equation}

\begin{remark}
{\rm In the particular case of the exterior Schwarzschild spacetime with mass $\mathbf{M}$,  the above formula reduces to 
$$
\|\mathbf{H}\|^{2}= \frac{S^g}{m(m-1)}- \frac{2\mathbf{M}}{v^{m+1}}.
$$
Then, a spacelike graph factoring through a lightlike hypersurface of $D_{\xi}$ or $D_{\eta}$ is marginally trapped if and only if $S^{g}=\frac{2\mathbf{M}m(m-1)}{v^{m+1}}.$ Also, taking into account the description of the Minkowski spacetime in Example \ref{280723A},  a spacelike graph factoring through a lightlike cone in the Minkowski spacetime is marginally trapped if and only if $S^{g}=0.$ }
\end{remark}

\begin{theorem}
Assume $\lambda$ depends only on the radial coordinate $r$ and $F$ is a non-compact  parabolic Riemannian manifold.
Let $\Psi:F\rightarrow B\times_{\lambda} F$ be a  spacelike  graph through an integral hypersurface $\mathcal{L}$ (resp.  $\mathcal{N}$) of $D_{\xi}$ (resp.  $D_{\eta}$) with $\mathbf{H}=0$. Then, $\Psi(F)$ is a totally geodesic slice.
\end{theorem}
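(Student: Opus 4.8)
The plan is to read $\mathbf{H}=0$ off the explicit formula in Proposition \ref{010623B} and then convert the resulting equation on $v$ into constancy by invoking parabolicity. I would carry out the argument for a leaf $\mathcal{L}$ of $D_\xi$; the case of a leaf $\mathcal{N}$ of $D_\eta$ is analogous, starting from the second formula of Proposition \ref{010623B} instead of (\ref{Hxi}).

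\emph{Step 1 (two scalar equations).} Since $\Psi$ is a graph it has codimension two, so Proposition \ref{010623B} applies and $TM^{\perp}$ is spanned by the linearly independent normal fields $\xi$ and $\eta^{\perp}$. Hence $\mathbf{H}=0$ forces both coefficients in (\ref{Hxi}) to vanish:
$$\dfrac{\xi\lambda}{\lambda}\circ\Psi_B=0\qquad\text{and}\qquad \dfrac{\eta\lambda}{\lambda}\circ\Psi_B-\Big(\dfrac{\xi\lambda}{\lambda}\circ\Psi_B\Big)\|\nabla v\|^{2}+\dfrac{1}{m}\,\Delta v=0.$$
Because $\lambda$ depends only on $r$, formula (\ref{11102022B}) gives $\xi\lambda=\lambda_r$ and $\eta\lambda=-\tfrac12 f^{2}\lambda_r$. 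The first equation therefore says $\lambda_r\circ\Psi_B=0$, which also makes $\eta\lambda\circ\Psi_B=0$; substituting into the second equation leaves $\Delta v=0$, so $v$ is harmonic on $(M,g)$. Moreover $d(\lambda\circ\Psi_B)=(\lambda_r\circ\Psi_B)\,dv=0$, so $\lambda\circ\Psi_B$ equals a positive constant $\lambda_0$, and since $\Psi_F=\mathrm{Id}_F$ for a graph, Remark \ref{110923B} gives $g=\lambda_0^{2}g_F$. Thus $(M,g)$ differs from the parabolic manifold $(F,g_F)$ only by a constant factor, hence is itself parabolic and has the same harmonic functions.

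\emph{Step 2 (harmonic $\Rightarrow$ constant, and conclusion).} This is the main point, and the step I expect to be the obstacle. The function $v=r\circ\Psi_B$ takes values in the level set $\{r:\lambda(r)=\lambda_0\}$, so by continuity and connectedness $v(M)$ is an interval on which $\lambda\equiv\lambda_0$; unless $\lambda$ is globally constant, this interval is bounded above or below. Hence $v$, or $-v$, is a subharmonic function bounded above on the parabolic manifold $(M,g)$, and parabolicity forces $v\equiv r_0$. (Alternatively, when the relevant level set is bounded one may note that $\Delta v^{2}=2\|\nabla v\|^{2}\ge 0$ makes $v^{2}$ a bounded subharmonic function, so $\nabla v\equiv 0$.) Once $v$ is constant, Lemma \ref{020323A} gives $\nabla u=(f\circ\Psi_B)^{-2}\nabla v=0$, so $u\equiv t_0$ as well, and $\Psi(F)$ is the slice at level $(t_0,r_0)$. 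Finally, Step 1 yields $\lambda_r(r_0)=0$, hence $\nabla^{B}\lambda(t_0,r_0)=0$, and by (\ref{110923A}) the slice is totally geodesic.

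In short, the routine part is pure substitution into the Section 6 formulas; the only delicate point is producing the one-sided bound on $v$ needed to apply parabolicity, and the identity $\lambda_r\circ\Psi_B=0$ from Step 1 is precisely what confines $v$ to a level set of $\lambda$ and makes that bound available.
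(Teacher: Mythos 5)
Your Step 1 is exactly the paper's first step: from (\ref{Hxi}) and $\lambda=\lambda(r)$ one gets $\lambda_r\circ\Psi_B=0$, hence $\lambda\circ\Psi_B\equiv\lambda_0$, $\eta\lambda\circ\Psi_B=0$ and $\Delta v=0$, and at the end $\nabla^B\lambda(t_0,r_0)=0$ makes the slice totally geodesic via (\ref{110923A}). Where you diverge is in how you produce the one-sided bound that parabolicity needs. The paper does not argue through the level set of $\lambda$: it feeds $\mathbf{H}=0$, $\lambda\circ\Psi_B=k$ and $S^{g_F}=k^2S^g$ into the curvature identity (\ref{04092023}) to obtain the pointwise relation $(k^2-v^2)S^g=m(m-1)(f\circ\Psi_B)^2>0$, which forces either $S^g>0$ with $v^2<k^2$ everywhere or $S^g<0$ with $v^2>k^2$ everywhere (the cases separate by connectedness, since $v^2=k^2$ would make the right-hand side vanish), and in either case $v$ or $-v$ is a bounded-above harmonic function.

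Your substitute for that step has a genuine gap, which you half-acknowledge. All you know is that $v(M)$ is an interval contained in $\{r:\lambda(r)=\lambda_0\}$. If that set has empty interior you are done at once (no parabolicity needed); if it contains an interval bounded on one side your argument works; but if it contains an interval unbounded in both directions you obtain no bound, and dismissing this as ``unless $\lambda$ is globally constant'' does not dispose of it --- it is precisely the case in which the statement itself fails. Concretely, take the first model of Example \ref{280723A}: $B=\R^2$, $f=1$, $\lambda=1$, $F=\E^{2}$ (non-compact and parabolic), and the graph $u=v=x_1$. It is spacelike with Euclidean induced metric, it factors through a lightlike hyperplane since $\nabla v=\nabla u$, and by Remark \ref{140823B} it has $\mathbf{H}=\frac{1}{m}\Delta v\,\xi=0$, yet it is not a slice. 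So your proof cannot be completed as written: one needs either a hypothesis excluding locally constant $\lambda$ or an input of the type the paper uses, namely the identity $(k^2-v^2)S^g=m(m-1)(f\circ\Psi_B)^2$, which is what rules out this configuration in their argument. I would add that the paper's own appeal to (\ref{04092023}) is itself delicate here, since that formula is established only for $\lambda(t,r)=r$, a case incompatible with $\lambda_r\circ\Psi_B=0$; your level-set approach at least has the virtue of exposing exactly where an additional hypothesis is required.
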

\begin{proof} We give the proof only for $\mathcal{L}$ since the case of $\mathcal{N}$ is similar.
From formula (\ref{Hxi}), we get that $ \lambda\circ\Psi_B$ is a constant $k\in \R_{+}$ and  $\Delta v=0$.   Then (\ref{04092023}) implies that
$$
\dfrac{(k^2- v^2)S^{g}}{m(m-1)}= (f \circ \Psi_{B})^2.
$$
There are two possibilities. The first one is $S^{g}>0$ and $v^2< k^2$, then the parabolicity of $F$ gives that $v$ is constant. The other possibility is $S^g <0$ and $k^{2}< v^2$. Therefore, $-v<-k$ or $v<-k$ with $\Delta v=0$, again the parabolicity of $F$ shows that $v$ is constant. Now, formula (\ref{110923A}) ends the proof. 
\end{proof}

\section{Parallel mean curvature}

\begin{lemma}\label{040623A}
Let $\Psi:M\rightarrow B\times_{\lambda} F$ be a codimension two spacelike immersion  through an integral hypersurface $\mathcal{L}$ of $D_{\xi}$. 
 For every $V\in\mathfrak{X}(M)$, we have 
$$
\widetilde{g}(\nabla^{\perp}_V\mathbf{H},\xi)=-g(\nabla v,V)\left(\dfrac{\xi(\xi \lambda)}{\lambda}\circ \Psi_{B}\right).
$$
For the case of an integral hypersurface  $\mathcal{N}$ of $D_{\eta}$, we have
$$
\widetilde{g}(\nabla^{\perp}_V\mathbf{H},\eta)=2g(\nabla v,V)\left( \frac{\eta (\eta \lambda)+f' f \, \eta \lambda}{\lambda f^2}\circ \Psi_{B}\right).
$$
When $\lambda(t,r)=r$, the above formulas reduce to 
$
\widetilde{g}(\nabla^{\perp}_V\mathbf{H},\xi)=0$ 
and 
$\widetilde{g}(\nabla^{\perp}_V\mathbf{H},\eta)=0,
$ respectively.

\end{lemma}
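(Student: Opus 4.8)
The plan is to differentiate the already-known component $\widetilde{g}(\mathbf{H},\xi)$ along $M$ and use metric compatibility of the normal connection. Since $\nabla^{\perp}$ is compatible with $\widetilde{g}$ on $TM^{\perp}$, for every $V\in\mathfrak{X}(M)$ we have
$$
\widetilde{g}(\nabla^{\perp}_V\mathbf{H},\xi)=V\big(\widetilde{g}(\mathbf{H},\xi)\big)-\widetilde{g}\big(\mathbf{H},\nabla^{\perp}_V\xi\big).
$$
The two ingredients on the right are supplied by earlier results: $\widetilde{g}(\mathbf{H},\xi)=-\frac{\xi\lambda}{\lambda}\circ\Psi_{B}$ is the trace of the formula for $A_{\xi}$ in Lemma \ref{16032023A} (see also Proposition \ref{010623B}), and $\nabla^{\perp}_V\xi=-\big(\frac{\xi\lambda}{\lambda}\circ\Psi_{B}\big)g(\nabla v,V)\,\xi$ is the first identity in Proposition \ref{28032023A}; substituting the former into the latter gives $\widetilde{g}(\mathbf{H},\nabla^{\perp}_V\xi)=\big(\frac{\xi\lambda}{\lambda}\circ\Psi_{B}\big)^{2}g(\nabla v,V)$. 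To handle the first term I would use the chain rule together with the explicit shape of $V^{B}$: for any $h\in C^{\infty}(B)$ one has $V(h\circ\Psi_{B})=dh(V^{B})$, and $V^{B}=g(V,\nabla v)\,\xi$ by Lemma \ref{210523D}, so that $V(h\circ\Psi_{B})=g(V,\nabla v)\,(\xi h)\circ\Psi_{B}$. With $h=\xi\lambda/\lambda$ this yields $V\big(\widetilde{g}(\mathbf{H},\xi)\big)=-g(\nabla v,V)\,\xi\!\big(\frac{\xi\lambda}{\lambda}\big)\circ\Psi_{B}$.

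Collecting the two contributions,
$$
\widetilde{g}(\nabla^{\perp}_V\mathbf{H},\xi)=-g(\nabla v,V)\left[\xi\Big(\frac{\xi\lambda}{\lambda}\Big)+\Big(\frac{\xi\lambda}{\lambda}\Big)^{2}\right]\circ\Psi_{B},
$$
and since $\xi(\xi\lambda/\lambda)+(\xi\lambda/\lambda)^{2}=\xi(\xi\lambda)/\lambda$ by the Leibniz rule, the bracket collapses to $\xi(\xi\lambda)/\lambda$, which is the asserted formula. The case of an integral hypersurface $\mathcal{N}$ of $D_{\eta}$ proceeds identically: one uses $\widetilde{g}(\mathbf{H},\eta)=-\frac{\eta\lambda}{\lambda}\circ\Psi_{B}$ (the trace of the formula for $A_{\eta}$ in Lemma \ref{16032023B}), the expression for $\nabla^{\perp}_V\eta$ in Proposition \ref{28032023B}, and the identity $V^{B}=-\big(\frac{2}{f^{2}}\circ\Psi_{B}\big)g(V,\nabla v)\,\eta$ from Lemma \ref{080623A} to differentiate $h\circ\Psi_{B}$; the summand $ff'\,\frac{\eta\lambda}{\lambda}$ brought in by $\nabla^{\perp}_V\eta$, together with $\eta(\eta\lambda/\lambda)+(\eta\lambda/\lambda)^{2}=\eta(\eta\lambda)/\lambda$, produces the bracket $\big(\eta(\eta\lambda)+f'f\,\eta\lambda\big)/\lambda$. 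Finally, for $\lambda(t,r)=r$ one has $\xi\lambda=\xi r=1$, hence $\xi(\xi\lambda)=0$, and $\eta\lambda=\eta r=-\frac12 f^{2}$, hence $\eta(\eta\lambda)+f'f\,\eta\lambda=\frac12 f^{3}f'-\frac12 f^{3}f'=0$; both right-hand sides vanish.

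I do not expect a genuine obstacle here: the statement is a direct consequence of the mean curvature formula and the computations of $\widetilde{g}(\mathbf{H},\xi)$, $\widetilde{g}(\mathbf{H},\eta)$ and $\nabla^{\perp}_V\xi$, $\nabla^{\perp}_V\eta$ already established in Sections 4--6. The only points demanding a little care are the chain-rule evaluation of $V(h\circ\Psi_{B})$ through the explicit form of $V^{B}$ (which is where the factor $g(\nabla v,V)$ enters, and why the result is tensorial in $V$), and the elementary algebraic simplification that collapses $\xi(\xi\lambda/\lambda)+(\xi\lambda/\lambda)^{2}$ — respectively its $\eta$-analogue plus the $ff'$ term — into the quotient displayed in the statement.
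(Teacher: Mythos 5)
Your proposal is correct and follows essentially the same route as the paper: the metric-compatibility identity you start from yields exactly the paper's intermediate formula $\widetilde{g}(\nabla^{\perp}_V\mathbf{H},\xi)=-V\big(\frac{\xi\lambda}{\lambda}\circ\Psi_B\big)-\big(\frac{\xi\lambda}{\lambda}\circ\Psi_B\big)^2 g(\nabla v,V)$ obtained from Propositions \ref{28032023A} and \ref{010623B}, and the chain-rule evaluation of $V(h\circ\Psi_B)$ through $V^{B}=g(V,\nabla v)\xi$ together with the Leibniz collapse of $\xi(\xi\lambda/\lambda)+(\xi\lambda/\lambda)^2$ is precisely how the paper concludes, with the $\eta$-case and the $\lambda(t,r)=r$ specialization handled identically.
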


\begin{proof}
From Propositions  \ref{28032023A}  and  \ref{010623B}, we derive  
\begin{equation}\label{130623A}
\widetilde{g}(\nabla^{\perp}_V\mathbf{H},\xi)=-V\left(\dfrac{\xi\lambda}{\lambda}\circ\Psi_B\right)-\left(\dfrac{\xi\lambda}{\lambda}\circ\Psi_B\right)^2g(\nabla v,V).
\end{equation}
A direct computation from Lemma \ref{210523D} shows that
$$
V\left(\dfrac{\xi\lambda}{\lambda}\circ\Psi_B\right)= V^{B}\left(\dfrac{\xi\lambda}{\lambda}\right)=g(\nabla v, V)\Big(\xi\left(\dfrac{\xi \lambda} {\lambda}\right)\circ \Psi_B\Big).
$$
Substituting this formula in (\ref{130623A}), we get
$$
\widetilde{g}(\nabla^{\perp}_V\mathbf{H},\xi)=-V\left(\dfrac{\xi\lambda}{\lambda}\circ\Psi_B\right)-\left(\dfrac{\xi\lambda}{\lambda}\circ\Psi_B\right)^2g(\nabla v,V)=-g(\nabla v,V)\Big(\dfrac{\xi(\xi \lambda)}{\lambda}\circ \Psi_B\Big).
$$
In a similar way,  from Propositions  \ref{28032023B} and  \ref{010623B}, we have
$$
\widetilde{g}(\nabla^{\perp}_V\mathbf{H},\eta)=-V\left(\dfrac{\eta\lambda}{\lambda}\circ\Psi_B\right)+\dfrac{2}{(f\circ\Psi_B)^2}\left(\dfrac{\eta\lambda}{\lambda}\circ\Psi_B\right)\left(ff'\circ\Psi_B+\dfrac{\eta\lambda}{\lambda}\circ\Psi_B\right)g(\nabla v,V)
$$
and Lemma \ref{080623A} ends the proof.
\end{proof}

\begin{theorem}\label{130623B}
Assume the warping function satisfies $\xi (\xi \lambda)\neq 0$ at every point and let $\Psi:M\rightarrow B\times_{\lambda} F$ be a codimension two spacelike immersion  through an integral hypersurface $\mathcal{L}$ of $D_{\xi}$.  Then the following assertions are equivalent
\begin{enumerate}
\item $\widetilde{g}(\nabla^{\perp}_{V}\mathbf{H}, \xi)=0$  for every $V\in\mathfrak{X}(M)$.
\item $M$ factors through a slice.
\item $\nabla^{\perp}\mathbf{H}=0$.
\end{enumerate}
\end{theorem}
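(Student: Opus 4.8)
The plan is to establish the cycle $(3)\Rightarrow(1)\Rightarrow(2)\Rightarrow(3)$. The implication $(3)\Rightarrow(1)$ is immediate, since $\nabla^{\perp}\mathbf{H}=0$ trivially gives $\widetilde{g}(\nabla^{\perp}_{V}\mathbf{H},\xi)=0$ for every $V$.

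For $(1)\Rightarrow(2)$ I would quote Lemma \ref{040623A}, which states that
$$
\widetilde{g}(\nabla^{\perp}_{V}\mathbf{H},\xi)=-g(\nabla v,V)\left(\dfrac{\xi(\xi\lambda)}{\lambda}\circ\Psi_{B}\right)
$$
for every $V\in\mathfrak{X}(M)$. Since $\lambda>0$ and, by hypothesis, $\xi(\xi\lambda)$ vanishes nowhere, assertion $(1)$ forces $g(\nabla v,V)=0$ for all $V$, hence $\nabla v=0$ on $M$; because $M$ is connected, $v$ is a constant $r_{0}$. Then Lemma \ref{020323A} (the relation $\nabla v=(f\circ\Psi_{B})^{2}\nabla u$) yields $\nabla u=0$, so $u$ is a constant $t_{0}$, i.e. $\Psi$ factors through the slice at level $(t_{0},r_{0})$.

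For $(2)\Rightarrow(3)$, assume $\Psi$ factors through a slice, so $u\equiv t_{0}$, $v\equiv r_{0}$, and therefore $\nabla v=0$ and $\Delta v=0$. By Proposition \ref{010623B} the mean curvature vector reduces to $\mathbf{H}=c_{1}\xi+c_{2}\eta^{\perp}$ with $c_{1}=(\eta\lambda/\lambda)(t_{0},r_{0})$ and $c_{2}=(\xi\lambda/\lambda)(t_{0},r_{0})$, which are \emph{constants} because $\Psi_{B}$ is constant. Substituting $\nabla v=0$ into Proposition \ref{28032023A} gives $\nabla^{\perp}_{V}\xi=0$ and $\nabla^{\perp}_{V}\eta^{\perp}=\mathrm{II}(\nabla v,V)=0$. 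Hence $\nabla^{\perp}_{V}\mathbf{H}=(Vc_{1})\xi+c_{1}\nabla^{\perp}_{V}\xi+(Vc_{2})\eta^{\perp}+c_{2}\nabla^{\perp}_{V}\eta^{\perp}=0$, which is $(3)$.

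The only step requiring genuine care is $(1)\Rightarrow(2)$, where the non-vanishing hypothesis $\xi(\xi\lambda)\neq0$ is indispensable: without it, assertion $(1)$ only says that $g(\nabla v,V)\,(\xi(\xi\lambda)/\lambda\circ\Psi_{B})\equiv0$, which is strictly weaker than $M$ factoring through a slice. Everything else is a direct substitution into the previously established Lemma \ref{040623A}, Lemma \ref{020323A}, Proposition \ref{010623B} and Proposition \ref{28032023A}; in particular, the degeneracy of $\nabla^{\perp}\xi$ and of $\nabla^{\perp}\eta^{\perp}$ when $\nabla v\equiv0$ makes the final implication essentially automatic.
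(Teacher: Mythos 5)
Your proposal is correct and follows essentially the same route as the paper's proof: Lemma \ref{040623A} together with the hypothesis $\xi(\xi\lambda)\neq 0$ gives $(1)\Rightarrow(2)$, Lemma \ref{020323A} forces $u$ constant, and the explicit slice formula for $\mathbf{H}$ from Proposition \ref{010623B} combined with Proposition \ref{28032023A} (and $\nabla v=0$) gives $(2)\Rightarrow(3)$, with $(3)\Rightarrow(1)$ trivial.
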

\begin{proof}
Assume that $\widetilde{g}(\nabla^{\perp}_{V}\mathbf{H}, \xi)=0$. Then from Lemma  \ref {040623A} it is directly follows  that $ v$ is a constant function.  From Lemma \ref{020323A}, the function $u$ is also constant and then $M$ factors through a slice.
The mean curvature vector field of a submanifold $M$ which factors through a slice is computed from (\ref{Hxi}) as follows
$$
\mathbf{H}=\Big(\dfrac{\eta\lambda}{\lambda}\circ \Psi_{B}\Big)\xi+\Big(\dfrac{\xi\lambda}{\lambda}\circ \Psi_{B}\Big)\eta.
$$
Hence as a direct consequence of Lemma \ref{210523D} and Proposition \ref{28032023A}, we get $\nabla^{\perp}\mathbf{H}=0$.
The rest of the proof is obvious.

\end{proof}

In a similar way we have. 

\begin{theorem}\label{130623C}
Assume  $\eta (\eta \lambda)+ f f' \eta \lambda \neq 0$ at every point and let $\Psi:M\rightarrow B\times_{\lambda} F$ be a codimension two spacelike immersion  through an integral hypersurface $\mathcal{N}$  of $D_{\eta}$.  Then the following assertions are equivalent
\begin{enumerate}
\item $\widetilde{g}(\nabla^{\perp}_{V}\mathbf{H}, \eta)=0$  for every $V\in\mathfrak{X}(M)$.
\item $M$ factors through a slice.
\item $\nabla^{\perp}\mathbf{H}=0$.
\end{enumerate}
\end{theorem}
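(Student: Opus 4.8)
The plan is to reproduce the argument of Theorem \ref{130623B} almost verbatim, replacing each $\xi$-version of an auxiliary result by its $\eta$-counterpart, so that the three implications $(1)\Rightarrow(2)\Rightarrow(3)\Rightarrow(1)$ chain in the same way.

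First I would establish $(1)\Rightarrow(2)$. Assuming $\widetilde{g}(\nabla^{\perp}_{V}\mathbf{H},\eta)=0$ for every $V\in\mathfrak{X}(M)$, the second formula of Lemma \ref{040623A} becomes
$$
2\,g(\nabla v,V)\left(\frac{\eta(\eta\lambda)+f'f\,\eta\lambda}{\lambda f^{2}}\circ\Psi_{B}\right)=0\qquad\text{for every }V\in\mathfrak{X}(M).
$$
Since $\lambda>0$ and $f>0$, the denominator $\lambda f^{2}$ never vanishes, while the numerator $\eta(\eta\lambda)+ff'\eta\lambda$ is nowhere zero by hypothesis; hence $g(\nabla v,V)=0$ for all $V$, i.e. $\nabla v=0$, and $v$ is constant because $M$ is connected. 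By Lemma \ref{020323A} applied to $\mathcal{N}$ we have $\nabla v=-(f\circ\Psi_{B})^{2}\nabla u$, so $\nabla u=0$ and $u$ is constant as well; thus $M$ factors through a slice.

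For $(2)\Rightarrow(3)$: if $M$ factors through a slice then $\nabla v=0$ and $\Delta v=0$, and since a slice lies simultaneously in an integral hypersurface of $D_{\xi}$ and of $D_{\eta}$ we have $\xi^{\top}=\eta^{\top}=0$, hence $\xi^{\perp}=\xi$ and $\eta^{\perp}=\eta$. Substituting $\nabla v=0$, $\Delta v=0$ into the mean curvature formula for the $\mathcal{N}$ case in Proposition \ref{010623B} gives
$$
\mathbf{H}=\Big(\dfrac{\eta\lambda}{\lambda}\circ\Psi_{B}\Big)\xi+\Big(\dfrac{\xi\lambda}{\lambda}\circ\Psi_{B}\Big)\eta,
$$
and the two coefficient functions are constant on $M$ because $u$ and $v$ are. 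On the other hand, with $\nabla v=0$ (so also $\mathrm{II}(\nabla v,\cdot)=0$) the formulas of Proposition \ref{28032023B} yield $\nabla^{\perp}_{V}\eta=0$ and $\nabla^{\perp}_{V}\xi^{\perp}=\nabla^{\perp}_{V}\xi=0$ for every $V$; combining these, $\nabla^{\perp}\mathbf{H}=0$. Finally $(3)\Rightarrow(1)$ is immediate.

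The only step requiring any care is the passage $(1)\Rightarrow(2)$, where one must invoke both the positivity $\lambda,f>0$ \emph{and} the standing hypothesis $\eta(\eta\lambda)+ff'\eta\lambda\neq 0$ in order to clear the scalar factor and conclude $\nabla v=0$; apart from that, the proof is a formal transcription of the $D_{\xi}$ case and I expect no genuine obstacle.
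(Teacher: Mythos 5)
Your proposal is correct and is precisely the argument the paper intends: the paper proves Theorem \ref{130623B} in detail and dispatches Theorem \ref{130623C} with ``in a similar way,'' and your transcription (Lemma \ref{040623A} plus the nonvanishing hypothesis to get $\nabla v=0$, Lemma \ref{020323A} for $u$, then Proposition \ref{010623B} and Proposition \ref{28032023B} for the slice case) is exactly that similar way, with the scalar factor cleared correctly using $\lambda, f>0$.
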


\begin{remark}
{\rm The proof of Theorems \ref{130623B} and \ref{130623C} does not work for  $\lambda(t,r)=r$.  In this case, Lemma \ref{040623A} gives 
$\widetilde{g}(\nabla^{\perp}_{V}\mathbf{H}, \xi)=0$ for a codimension two spacelike immersion through an integral hypersurface $\mathcal{L}$ of $D_{\xi}$.
Hence, the mean curvature vector field is parallel if and only if $\widetilde{g}(\nabla^{\perp}_V\mathbf{H},\eta^{\perp})=0$.
A similar result is achieved for codimension two spacelike immersions  through an integral hypersurface $\mathcal{N}$ of $D_{\eta}$.

}

\end{remark}

\noindent\small{Departamento de Matemática Aplicada, Universidad de Málaga, 29071-Málaga (Spain).}\\
\small{E-mails: rodrigometalica\_94@hotmail.com (Rodrigo Morón) and fpalomo@uma.es (Francisco J. Palomo).}\\

\end{document}